\newcommand{\wMSE}{ \mathsf{E} }
\theoremstyle{plain}
\newtheorem{theorem}{Theorem}[section]
\newtheorem{lemma}[theorem]{Lemma}
\newtheorem{corollary}[theorem]{Corollary}
\theoremstyle{definition}
\newtheorem{definition}[theorem]{Definition}
\newtheorem{assumption}[theorem]{Assumption}
\newtheorem{problem}[theorem]{Problem}
\theoremstyle{remark}
\newtheorem{remark}[theorem]{Remark}
\newcommand{\tk}[1]{#1}
\newcommand{\Htwo}{\mathit{H_2}}
\newcommand{\Hinf}{\mathit{H_\infty}}
\newcommand{\Ss}{\mathcal{S}}
\newcommand{\w}{\mathbf{w}} %disturbance
\newcommand{\s}{\mathbf{s}} 
\newcommand{\hats}{\widehat{\s}}
\newcommand{\y}{\mathbf{y}}
\newcommand{\ee}{\mathbf{e}}
\newcommand{\vv}{\mathbf{v}}
\renewcommand{\d}{\mathsf{d}}
\newcommand{\G}{\mathcal{G}} %transfer op from disturbance to output
\newcommand{\K}{\mathcal{K}} %controller
\newcommand{\T}{\mathcal{T}} %Closed-loop op. from disturbance to output and input
\newcommand{\HH}{\mathcal{H}}
\newcommand{\M}{\mathcal{M}} %covariance operator of the disturbance
\renewcommand{\L}{\mathcal{L}} %spectral factor of the covariance op. 
\newcommand{\CC}{\mathcal{C}} %C_K = T_K^* T_K - T_K0^*T_K0
\newcommand{\I}{\mathcal{I}} %identity op.
\newcommand{\U}{\mathcal{U}} %unitary
\newcommand{\W}{\mathscr{W}} 
\newcommand{\causal}{\mathscr{K}}
\newcommand{\trig}{\mathscr{T}}
\newcommand{\BW}{\mathsf{BW}} %Bures-wasserstein distance
\newcommand{\Was}{\mathsf{W_2}}
\newcommand{\HS}{2} %Hilbert-Schmidt
\newcommand{\op}{\infty} %operator norm
\newcommand{\ejw}{\e^{{j}\omega}}
\DeclareMathOperator*{\argmax}{arg\,max}
\newcommand{\subjto}{\,\mathrm{s.t.}\,}
\newcommand{\defeq}{\coloneqq}                      %LHS defined as RHS
\newcommand{\inn}{\!\in\!}
\newcommand{\+}{\!+\!}
\renewcommand{\-}{\!-\!}
\renewcommand{\=}{\!=\!}
\renewcommand{\>}{\!>\!}
\newcommand{\ie}{\textit{i.e.}}
\newcommand{\eps}{\varepsilon}          %epsilon
\newcommand{\N}{\mathbb{N}}     %natural numbers
\newcommand{\R}{\mathbb{R}}     %real numbers
\newcommand{\C}{\mathbb{C}}     %complex numbers
\newcommand{\Sym}{\mathbb{S}}   %set of symmetric real matrices
\newcommand{\Prob}{\mathscr{P}} %Set of probability measures
\newcommand{\TT}{\mathbb{T}}     %natural numbers
\newcommand{\pr}[1]{\left({#1}\right)}          %parenthesis
\newcommand{\br}[1]{\left[{#1}\right]}          %bracket
\newcommand{\cl}[1]{\left\{{#1}\right\}}        %curly bracket
\newcommand{\abs}[1]{\vert{#1}\vert}                    %absolute value
\newcommand{\Abs}[1]{\left\vert{#1}\right\vert}         %absolute value
\newcommand{\norm}[2][\text{}]{\Vert{#2}\Vert_{#1}}     %norm of #2 in [#1]
\newcommand{\Norm}[2][\text{}]{\left\Vert{#2}\right\Vert_{#1}} %norm of #2 in [#1]
\newcommand{\clf}[1]{\mathcal{#1}} %calligraphic
\newcommand{\tr}{\operatorname{tr}}         %trace
\newcommand{\Tr}{\operatorname{Tr}}         %trace
\newcommand{\inv}{\mathrm{\-1}}             %inverse
\newcommand{\psdgeq}{\succcurlyeq} %positive semi-definite
\newcommand{\psdg}{\succ}          %positive definite
\newcommand{\E}{\operatorname{\mathbb{E}}} %expectation operator
\renewcommand{\Pr}{\operatorname{\mathbb{P}}} %probability function
\newcommand{\sampled}[1][\text{}]{\stackrel{#1}{\sim}}
\newcommand{\normal}{\operatorname{\mathcal{N}}} %normal distribution
\newcommand{\beq}[1]{\begin{align*}\label{eq:#1}}
\newcommand{\eeq}{\end{align*}}
\newcommand{\suml}{\sum\nolimits}
\newcommand{\e}{\mathrm{e}}             %e number
\newcommand{\half}{\frac{1}{2}} %one half
\newcommand{\xMapsto}[2][]{\ext@arrow 0599{\Mapstofill@}{#1}{#2}}
\def\Mapstofill@{\arrowfill@{\Mapstochar\Relbar}\Relbar\Rightarrow}
\title{Distributionally Robust Kalman Filtering over Finite and Infinite-Horizon}
\author{%
  Taylan Kargin$^*$ \\
  Caltech\\
  Pasadena, CA 91125 \\
  \texttt{tkargin@caltech.edu} \\
  % examples of more authors
   \And
  Joudi Hajar$^*$ \\
  Caltech\\
  Pasadena, CA 91125 \\
  \texttt{jhajar@caltech.edu} \\
   \AND
  Vikrant Malik$^*$ \\
  Caltech\\
  Pasadena, CA 91125 \\
  \texttt{vmalik@caltech.edu} \\
   \And
  Babak Hassibi \\
  Caltech\\
  Pasadena, CA 91125 \\
  \texttt{hassibi@caltech.edu} \\
  % \And
  % Coauthor \\
  % Affiliation \\
  % Address \\
  % \texttt{email} \\
}
\begin{document}

\maketitle

%%%%%%%%%%%%%%%%%%%%%%%%%%%%%%%%%%%%%%%%%%%%%%%%%%%%%%   
%%%%%%%%%%%%%%%%%%%%%% ABSTRACT %%%%%%%%%%%%%%%%%%%%%%
%%%%%%%%%%%%%%%%%%%%%%%%%%%%%%%%%%%%%%%%%%%%%%%%%%%%%% 
\begin{abstract}
This paper investigates the distributionally robust filtering of signals generated by state-space models driven by exogenous disturbances with noisy observations in finite and infinite horizon scenarios. The exact joint probability distribution of the disturbances and noise is unknown but assumed to reside within a Wasserstein-2 ambiguity ball centered around a given nominal distribution. We aim to derive a causal estimator that minimizes the worst-case mean squared estimation error among all possible distributions within this ambiguity set. We remove the iid restriction in prior works by permitting arbitrarily time-correlated disturbances and noises. In the finite horizon setting, we reduce this problem to a semi-definite program (SDP), with computational complexity scaling with the time horizon. For infinite horizon settings, we characterize the optimal estimator using Karush-Kuhn-Tucker (KKT) conditions. Although the optimal estimator lacks a rational form, \ie, a finite-dimensional state-space realization, it can be fully described by a finite-dimensional parameter. {Leveraging this parametrization, we propose efficient algorithms that compute the optimal estimator with arbitrary fidelity in the frequency domain.} Moreover, given any finite degree, we provide an efficient convex optimization algorithm that finds the finite-dimensional state-space estimator that best approximates the optimal non-rational filter in ${\cal H}_\infty$ norm. This facilitates the practical implementation of the infinite horizon filter without having to grapple with the ill-scaled SDP from finite time. Finally, numerical simulations demonstrate the effectiveness of our approach in practical scenarios.

\end{abstract}

%%%%%%%%%%%%%%%%%%%%%%%%%%%%%%%%%%%%%%%%%%%%%%%%%%%%%%   
%%%%%%%%%%%%%%%%%%%%%% SECTIONS %%%%%%%%%%%%%%%%%%%%%%
%%%%%%%%%%%%%%%%%%%%%%%%%%%%%%%%%%%%%%%%%%%%%%%%%%%%%% 
%\vspace{-2mm}
\section{{Introduction }} \label{sec:intro}
%\vspace{-2mm}

The Kalman filter (KF), introduced by Rudolf Kalman in 1960 \cite{kalman_new_1960}, is a fundamental tool for estimating dynamic signals generated by state-space models from noisy measurements. It has become indispensable across various fields, such as tracking \cite{chan_kalman_1979, youngrock_yoon_new_2008}, navigation \cite{grewal_application_1990,honghui_qi_direct_2002}, robotics \cite{rodriguez_kalman_1987,janabi-sharifi_kalman-filter-based_2010, chen_kalman_2012}, autonomous vehicles \cite{shoval_implementation_1997, farag_kalman-filter-based_2021}, aerospace \cite{arthur_e_bryson_kalman_1978,lefferts_kalman_1982,pittelkau_kalman_2001}, earth sciences \cite{trifu_application_2002, hargreaves_efficient_2004,galanis_applications_2006, tuan_pham_singular_1998}, biomedicine \cite{galka_solution_2004, poupon_real-time_2008, smith_kalman-based_2019}, economics and finance \cite{watson_applications_1980,schneider_analytical_1988,kellerhals_financial_2001}. Its efficacy hinges heavily on accurately modeling state-space parameters and noise statistics, which often deviate from the actual model due to statistical and approximation errors, inherent environmental uncertainties, and non-stationarities. These deviations can severely degrade performance \cite{sorenson_kalman_1985, grewal_kalman_1993, gelb_applied_2006}, posing severe risks in safety-critical applications such as aircraft navigation and autonomous vehicles. Therefore, enhancing the robustness of the Kalman filter against inaccuracies and uncertainties is crucial for ensuring safe and reliable operation.

Traditionally, robustness in the Kalman filter has been addressed by treating uncertainties as adversarial, deterministic perturbations. In this context, the $\Hinf$-filter \cite{grimble_h__1987,nagpal_filtering_1991, xie_h_1991, fu_h_1992, shaked_hsub_1992, hassibi_linear_1996, blackbook} has garnered extensive research, driven by significant advances in robust control theory \cite{blackbook, doyle_state-space_1988, zhou_robust_1996, basar_h-optimal_2008}. The $\Hinf$-filter enhances robustness by minimizing the worst-case mean-squared estimation error (MSE) attainable among all bounded energy (or power) disturbances. Although these uncertainties are presumed to arise from exogenous disturbances, the optimal $\Hinf$-filter also ensures robustness against small modeling errors in state-space parameters \cite{zames_feedback_1981}. More recently, regret-optimal filtering \cite{sabag_regret-optimal_2022, goel2023regret} has been introduced to balance performance and robustness. Unlike the $\Hinf$-filter, it minimizes the worst-case regret, defined as the excess error a causal estimator suffers compared to a clairvoyant estimator, among all bounded energy disturbances. While effective against large uncertainties, these filters neglect distributional information and may become overly conservative when faced with stochastic disturbances \cite{petersen_robust_1999}.

Distributionally robust (DR) estimation and filtering offers an alternative framework that addresses the limitations of traditional robust filtering. Pioneered by Kassam and Poor \cite{kassam_robust_1977,kassam_robust_1985} in the context of Wiener filtering \cite{wiener_extrapolation_1949}, this approach enhances robustness against uncertainties through the use of ambiguity sets of plausible statistical models. The behavior of the resulting robust filter is intricately tied to the topology of the ambiguity set, which is often constructed as a ball induced by a statistical distance or divergence. Examples include the total variation (TV) distance \cite{poor_robust_1980, vastola_robust_1984}, the Kullback-Leibler (KL) divergence \cite{levy_robust_2004, levy_robust_2013, zorzi_robustness_2017, zorzi_robust_2017}, and the Wasserstein-2 ($\Was$) distance \cite{shafieezadeh_2018, wang_robust_2021, wang_distributionally_2022-2,han_distributionally_2023, brouillon_regularization_2023, lotidis_wasserstein_2023, prabhat_optimal_2024}. The filters derived from KL-ambiguity sets have been linked \cite{levy_robust_2004, levy_robust_2013, boel_robustness_1997,hansen_robust_2005} to risk-sensitive filters \cite{jacobson_optimal_1973, speyer_optimization_1974, speyer_optimal_1992, whittle_risk-sensitive_1981, hassibi_linear_1996}, which minimize the exponentiated squared estimation error. A significant drawback of KL-ambiguity is its limited expressivity, as it only includes distributions whose support matches the nominal distribution \cite{hu_kullback-leibler_2012}. Due to its geometric interpretability as the optimal transportation metric \cite{villani_optimal_2009}, the $\Was$-distance has recently seen widespread adoption across various fields, including machine learning \cite{arjovsky_towards_2017}, computer vision \cite{liu_semantic_2020,ozaydin_omh_2024}, control \cite{DRORO, tacskesen2023distributionally, aolaritei_wasserstein_2023,brouillon2023distributionally, hajar_wasserstein_2023, kargin_wasserstein_2023}, data compression \cite{blau2019rethinking,lei2021out,malik_distributionally_2024}, and robust optimization \cite{zhao_data-driven_2018,mohajerin_esfahani_data-driven_2018,kuhn2019wasserstein,gao_distributionally_2022,yue_linear_2022,blanchet_unifying_2023,blanchet_distributionally_2024}. $\Was$-ambiguity sets offer richer expressivity, encompassing distributions with both discrete and continuous support. The $\Was$-distance also renders computationally tractable formulations for problems involving quadratic objectives, such as least mean-squared estimation \cite{nguyen_bridging_2021}, and linear-quadratic control \cite{DRORO}.

\subsection{Related Works}
Recognizing these advantages, \citet{shafieezadeh_2018} introduced a distributionally robust Kalman filter based on $\Was$-ambiguity sets confined to Gaussian distributions only. They derive state estimates at local time instances by minimizing the mean-squared error for the least favorable joint posterior distribution of the state-measurement vector, given past measurements. This iterative procedure, assuming iid Gaussian disturbances, incorporates the worst-case covariance of the previous state estimate into the nominal model for the subsequent time step. However, while this method inherently addresses state-space parameter mismatches, it lacks a global robustness guarantee over the entire time horizon and against non-iid or non-Gaussian disturbances. Similar temporally local approaches have also been studied in \cite{wang_robust_2021, wang_distributionally_2022-2,han_distributionally_2023}. More recently, \citet{lotidis_wasserstein_2023} took a different approach by imposing distributional uncertainty on the measurement noise process over the entire time horizon, assuming known iid process noise with known covariance. While the resulting filter demonstrates global robustness over the entire time horizon, the adversarial measurement noise is constrained by martingale conditions to prevent clairvoyance and dependence on future process noise realizations. Moreover, the assumption of known iid process noise is restrictive and does not provide robustness to modeling errors of the dynamics and the process noise. 
%\vspace{-4mm}
\subsection{Contributions}
%\vspace{-2mm}
In this work, we consider the Wasserstein-2 distributionally robust Kalman filtering ($\Was$-DR-KF) of linear state-space models for both finite and infinite horizons. The probability distribution of the disturbances over the entire time horizon is assumed to lie in a $\Was$-ball of a specified radius centered at a given nominal distribution. We seek the optimal causal linear estimator of a target signal that minimizes the worst-case MSE within the $\Was$-ball. We cast this as a min-max optimization problem (\cref{prob:finite horizon drf}, \cref{prob:infinite horizon drf}). Our approach differs drastically from the prior works \cite{shafieezadeh_2018, wang_robust_2021, wang_distributionally_2022-2, han_distributionally_2023, lotidis_wasserstein_2023} and possesses several advantages which can be listed as follows:

\textbf{1. Global robustness to non-iid disturbances:} In contrast to focusing on the worst-case MSE at local time instances under unknown iid disturbances \cite{shafieezadeh_2018, wang_robust_2021, wang_distributionally_2022-2, han_distributionally_2023}, our approach minimizes the cumulative MSE under the worst-case disturbance trajectory, thereby achieving global robustness for the entire horizon. Moreover, unlike \cite{lotidis_wasserstein_2023}, we impose no restrictions on the dependencies of the disturbances, accommodating arbitrarily correlated process and measurement noise sequences.

\textbf{2. Bounded steady-state error:} We derive the first infinite-horizon (aka steady-state) $\Was$-DR-KF, analogous to the steady-state Kalman and $\Hinf$-filters \cite{kailath_linear_2000,blackbook}. We show that the estimation error converges to a steady state (\cref{thm:stability}) with bounded covariance.

\textbf{3. Efficient real-time implementation:} The finite-horizon $\Was$-DR-KF requires solving an ill-scaled SDP (\cref{thm:finite horizon sdp}), rendering it impractical for real-time implementation over long time horizons. However, our infinite-horizon $\Was$-DR-KF can be implemented efficiently, thanks to our novel rational approximation, thereby overcoming the scalability issues of SDP formulation.

Our contributions are summarized as follows: 

\textbf{1. Tractable convex formulation:} We derive an SDP (\cref{thm:finite horizon sdp}) formulation for the finite-horizon problem, and a concave-convex max-min optimization problem over positive-definite Toeplitz operators (\cref{thm:dual formulation}) for the infinite horizon one. 

\textbf{2. Optimality of linear estimators for Gaussian nominal:} We focus on linear estimators while allowing the distributions in the ambiguity set to be non-Gaussian. For Gaussian nominal distributions, we show the optimality of linear estimators (\cref{thm:minimax}).

\textbf{3. Characterization of the infinite-horizon DR-KF:}  We derive the infinite-horizon $\Was$-DR-KF via KKT conditions (\cref{thm:dual formulation}) and show that the transfer function of the infinite-horizon $\Was$-DR-KF is non-rational, and thereby lacks a finite-order state-space realization. However, we also show that it can be uniquely characterized through a nonlinear finite-dimensional parametrization (\cref{thm:fixed_point}).

\textbf{4. An efficient algorithm to compute the optimal filter:} Using frequency-domain techniques, we introduce an efficient algorithm, based on the Frank-Wolfe method, to compute the optimal infinite-horizon $\Was$-DR-KF (\cref{alg:fixed_point_detailed}). We construct the best rational approximation, in the $\Hinf$-norm, of any given degree, for the non-rational optimal $\Was$-DR-KF  via a novel convex program (\cref{thm:state space filter}).

\textbf{Notations:} Bare calligraphic letters ($\K$, $\M$, etc.) are reserved for operators, with the subscripted ones ($\K_T$, $\M_T$, etc.) being finite-dimensional. $\I$ is the identity operator with a suitable block size. Asterisk $\M^\ast$ denotes the adjoint of $\M$. $\psdg$ is the usual positive-definite ordering. $\tr(\cdot)$ is the trace. $\norm{\cdot}$ is the usual Euclidean norm. $\norm[\op]{\cdot}$ and $\norm[\HS]{\cdot}$ are the $\Hinf$ (operator) and $\Htwo$ (Frobenius) norms, respectively. $\{\M\}_{+}$ and $\{\M\}_{-}$ denote the causal and strictly anti-causal parts. $\sqrt{\M}$ is the positive-definite symmetric square root. $\Sym_+^n$ is the set of psd matrices. $\abs{z}$ is the magnitude and $z^\ast$ is the conjugate of a complex number $z\!\in\!\C$. The complex unit circle is denoted by $\TT$. $\Pr$ denotes a probability distribution and $\Prob_p$ is the set of distributions with finite $p^\textrm{th}$ moment. $\E$ denotes the expectation. The Wasserstein-2 distance between distributions $\Pr_1,\Pr_2 \!\in\! \R^{n}$ is denoted by $\Was(\Pr_1,\Pr_2)$ such that
\begin{equation}\label{eq:wasserstein}
%\vspace{-4mm}
    \Was(\Pr_1,\Pr_2) \triangleq \pr{ \inf\, \E\br{ \norm{\w_1 \- \w_2}^2} }^{1/2} ,
    %\vspace{-2mm}
\end{equation}
where the infimum is over all joint distributions of $(\w_1,\w_2)$ with marginals $\w_1 \!\sampled \!\Pr_1$ and $\w_2\! \sampled \!\Pr_2$.
\section{{Problem Setup} } \label{sec:prelim}
%\vspace{-2mm}
In this section, we formulate the distributionally robust filtering problem for both finite and infinite horizon settings. To this end, consider the following state-space model:
\begin{equation}\label{eq: state space}
\begin{aligned}
%\vspace{-2mm}
    x_{t+1} &= A x_{t} + B w_{t},\\
    y_{t} &= C_y x_{t} + v_{t}, \\
    s_{t} &= C_s x_{t},
    %\vspace{-2mm}
\end{aligned} 
\end{equation}
At time $t\!\in\!\N$, let $x_{t} \!\in\! \R^{\d_x}$ denote the unobserved \emph{latent state}, $y_{t} \!\in\! \R^{\d_y}$ the \emph{measurement}, $s_{t} \!\in\! \R^{\d_s}$ the unobserved \emph{target signal} to be estimated, $w_{t} \!\in\! \R^{\d_w}$ the \emph{process noise}, and $v_{t} \!\in\! \R^{\d_v}$ the \emph{measurement noise}. The combined process-measurement noise sequence constitutes the \emph{exogenous disturbance}. The setup presented above is quite general and widely adopted in the estimation and filtering literature \cite{blackbook, kailath_linear_2000}. The usual state estimation problem is a specific instance of this setup with $C_s \= I$. Moreover, we assume that $(A,C_y)$ and $(A, C_s)$ are detectable and $(A, B)$ is controllable.

 We take a global view of the dynamics \eqref{eq: state space} by treating the entire signal trajectories over a fixed time horizon $T\>0$ as large column vectors. Concretely, we define the measurements vector $\y_T \!\defeq \![y_{0}; y_{1}; \dots;  y_{T\-1}] \!\in\!\R^{T\d_y}$, the target signal vector $\s_T \!\defeq \!  [s_{0}; s_{1};  \dots;  s_{T\-1}] \!\in\! \R^{Td_s}$, the process noise vector $\w_T \!\defeq \![x_{0};  w_{0};  \dots; w_{T\-2}]\!\in\! \R^{\d_x \+ (T\-1)\d_w}$, and the measurement noise vector $\vv_T \!\defeq \! [ v_{0};  v_{1}; \dots; v_{T\-1}]\!\in\! \R^{T \d_v}$. Notice that the initial state $x_0$ is considered unknown and included in the vector of process noise, $\w_T$, for convenience. With the prevailing notation, the state-space dynamics can be represented compactly as a \emph{causal linear measurement model}:
\begin{equation} \label{eq: finite horizon model}
\begin{aligned}
    \y_T &= \HH_T \w_T + \vv_T, \\
    \s_T &=  \L_T \w_T, 
\end{aligned}
\end{equation}
where $\HH_T$ and $\L_T$ are both \emph{block causal} (\ie, block lower-triangular) matrices. These matrices can be constructed easily from the state-space parameters $(A,B,C_y,C_u)$ (see \cref{app:explicit H and L}). This representation is quite general and can be extended immediately for time-varying state-space models with appropriately constructed matrices $\HH_T$ and $\L_T$.

Letting the stacked column vector $\bm{\xi}_T \!\defeq\! [ \w_T; \vv_T] \!\in\! \Xi_T$ denote the combined disturbances where $\Xi_T \!\defeq\! \R^{\d_x \+ (T\-1)\d_w + T \d_v}$, the disturbances $\bm{\xi}_T$ are distributed according to an unknown distribution $\Pr_T \!\in\! \Prob_2(\Xi_T)$. Note that the disturbances can be arbitrarily correlated in general. Wlog, we will assume $\bm{\xi}_T$ to be zero-mean for convenience. Our main assumption for $\Pr_T$ is as follows:
\begin{assumption}\label{asmp:ambiguity}
The true distribution $\Pr_T$ of disturbances $\bm{\xi}_T$ resides in a $\Was$-ball,
\begin{equation}\label{eq:wass ambiguity set} 
    \W_T(\Pr_{T}^\circ,{\rho_T}) \defeq \cl{\Pr_T \in \Prob_2(\Xi_T) \mid  \Was(\Pr_T,\, \Pr_{\circ,T}) \leq {\rho_T}},
\end{equation}
where ${\rho_T} \> 0$ is a specified radius and $\Pr_{T}^\circ \!\in\!\Prob_2(\Xi_T)$ is a given nominal disturbance distribution.
\end{assumption}

\begin{remark}\label{remark: model errors}
    Although the state-space parameters $(A,B,C_y,C_s)$ are assumed to be known perfectly, uncertainty in them can be incorporated into the disturbances without loss of generality. %\tk{For an illustrative example, see \cref{app: modeling error}.}
\end{remark}

\subsection{The Finite-Horizon Distributionally Robust Filtering}

A \emph{filtering policy} $\pi_T \defeq \{\pi_t \mid t\=0,\dots,T\-1\}$ is a sequence of mappings that generate estimates $\widehat{s}_t$ of $s_t$ from the past and present measurement as $\widehat{s}_t = \pi_t(y_t,y_{t-1},\dots,y_0)$. In particular, we focus on \emph{linear filtering policies} $\K_T: \y_T \mapsto \hats_T$ such that $\hats_T = \K_T \y_T$ where  $\hats_T \defeq [\widehat{s}_{0}; \widehat{s}_{1};  \dots;  \widehat{s}_{T\-1}]$ is the the column vector of estimates. We denote the class of all such policies by $\causal_T$, defined as
\begin{equation} \label{eq:causal filters}
    \causal_T \triangleq \cl{\K_T  \in \R^{T\d_s \times T\d_y} \mid \K_T \textrm{ is block lower-triangular}}.
\end{equation}
The restriction to linear filters is a common strategy in estimation literature, as general nonlinear estimators can be challenging to compute \cite{kailath_linear_2000}. Additionally, linear filters are optimal for Gaussian processes. In \cref{thm:minimax}, we establish the optimality of linear filters when the nominal is Gaussian.

For a filtering policy $\K_T$, let $\ee_T(\bm{\xi}_T,\K_T) \triangleq \hats_T - \s_T  = \T_{\K_{T}} \bm{\xi}_T$ be the estimation error where  $\T_{\K_T}: \bm{\xi}_T \mapsto \ee_T$ is the \emph{error transfer operator} defined as 
\begin{equation}\label{eq: transfer operator}
    \T_{\K_T} \triangleq \begin{bmatrix}  \K_T \HH_T - \L_T  & \K_T \end{bmatrix}.
\end{equation}
Given that the true distribution of disturbances is unknown, we focus on minimizing the \emph{worst-case mean-squared error (MSE)} across all distributions within the ambiguity set $ \W_T(\Pr_{T}^\circ,{\rho_T}) $, namely, 
\begin{equation}\label{eq:finite horizon worst case MSE}
    \wMSE_T(\K_T, \rho_T)\; \triangleq \sup_{\Pr_T \in \W_T(\Pr_{T}^\circ,\rho_T)}  \E_{\Pr_T} \br{  \norm{\ee_T(\bm{\xi}_T,\K_T)}^2}.
\end{equation}
where $\E_{\Pr_T}$ denotes the expectation under the distribution $\Pr_T$. We state the distributionally robust Kalman filtering problem for the finite-horizon setting as follows:
\begin{problem}[$\Was$-DR-KF over a finite-horizon] \label{prob:finite horizon drf}
    For a given time-horizon $T\>0$ and a radius $\rho_T\>0$, find a casual filtering policy, $\K_T\!\in\!\causal_T$, that minimizes the worst-case MSE defined in~\eqref{eq:finite horizon worst case MSE}, \ie,
    % \vspace{-0mm}
    \begin{equation} \label{eq:finite horizon drf}%\vspace{-2mm}
    %\inf_{\K \in \causal} { \lim_{T\to \infty}  \frac{1}{\abs{\II_T}}    \sup_{\Pr \in \W_T} \E_{\Pr}\br{\regret_T(\Kfin_T, {\wfin_T})}}.
    \inf_{\K_T \in \causal_T} \wMSE_T(\K_T, \rho_T) \;= \inf_{\K_T \in \causal_T} \sup_{\Pr_T \in \W_T(\Pr_{T}^\circ,\rho_T)} \E_{\Pr_T} \br{  \norm{\ee_T(\bm{\xi}_T,\K_T)}^2}.
\end{equation}
\end{problem}

\begin{remark}
    The causality constraint on the estimates $\hats_T$ is crucial for filtering. Without causality enforced, \cref{prob:finite horizon drf} essentially reduces to a standard estimation problem as the nominal non-causal estimator is optimal for any $\rho_T\>0$ (\cref{lem:finite horizon non-causal}).
\end{remark}

\subsection{The Infinite-Horizon Distributionally Robust Filtering} \label{sec:infinite horizon setup}

Designing optimal filters for extended horizons can generally be impractical extended time horizons. To mitigate this, time-invariant steady-state filters are usually deployed for practical purposes. These filters can be characterized by their Markov parameters $\{\widehat{K}_{t}\}$, allowing the estimates $\{\widehat{s}_t\}$ to be computed as a convolution sum: $\widehat{s}_t \= \sum_{s=0}^{t} \widehat{K}_{t-s} y_s$. This can be expressed compactly as $\widehat{\mathbf{s}} = \mathcal{K} \mathbf{y}$, where $\mathcal{K}$ is a bounded, causal, and doubly-infinite block Toeplitz operator constructed from the Markov parameters ${\widehat{K}_{t}}$. We denote the class of all such filtering policies by $\causal$.

Here, $\y$ and $\hats$ are the doubly infinite column vectors of measurements and estimates, respectively. Furthermore, letting by $\bm{\xi}= [\w; \vv]$, and $\s$ be doubly-infinite disturbance and target signal vectors, respectively, the state-space dynamics \eqref{eq: state space} over an infinite-horizon can then be described as follows:
\begin{equation} \label{eq: infinite horizon model}
\begin{aligned}
%\vspace{-2mm}
    \y &= \HH \w + \vv, \\
    \s &=  \L \w, 
%\vspace{-2mm}
\end{aligned}
\end{equation}
where $\HH$ and $\L$ are strictly causal, doubly-infinite, block Toeplitz operators, completely described by the state-space parameters $(A,B,C_y,C_s)$. The error transfer operator $\T_\K : \bm{\xi}\mapsto \ee \defeq \hats\-\s$ under a stationary causal filtering policy $\K\in\causal$ is defined similarly as $\T_\K \defeq \begin{bmatrix}  \K \HH \- \L  & \K \end{bmatrix}$. Note that these Toeplitz operators are equivalently identified by transfer function formalism. In particular, we have $\HH \leftrightarrow H(z) \!\defeq \!C_y(zI - A)^\inv B$ and $\L \leftrightarrow L(z) \!\defeq\! C_s(zI - A)^\inv B$ for $z\in \TT$.

Instead of focusing on a fixed horizon, we consider the time-averaged steady-state worst-case MSE as the horizon approaches infinity, \ie, 
%\vspace{-1mm}
\begin{equation}\label{eq:infinite horizon worst case MSE}
    \overline{\wMSE}(\K,\rho) \triangleq \limsup_{T\to \infty}  \frac{1}{T} \wMSE_T(\K, \rho_T) = \limsup_{T\to \infty}  \frac{1}{T} \sup_{\Pr_T \in \W_T(\Pr_{T}^\circ,\rho_T)}  \E_{\Pr_T} \br{  \norm{\ee_T(\bm{\xi}_T,\K)}^2}.
%\vspace{-1mm}
\end{equation}
The limit above may generally be infinite without further specification of the asymptotics of the ambiguity set. To ensure the finiteness of the steady-state MSE, we make the following assumptions:
\begin{assumption}\label{asmp:infinite horizon assumptions}
    The nominal disturbances $\{(w_t^\circ,v_t^\circ)\}$ form a zero-mean weakly stationary random process, \ie, the cross covariance between $(w_t^\circ,v_t^\circ)$ and $(w_t^\circ,v_t^\circ)$  only depends on the difference ${t\-s}$. Furthermore, the size of the ambiguity set for horizon $T\>0$ scales as $\rho_T \sampled \rho \sqrt{T}$ for a $\rho\>0$.
\end{assumption}
The assumption on the radius $\rho_T$ for varying $T$ is justified, as the total energy of a random vector of length $T$ from a weakly stationary process scales linearly with $T$. We state the distributionally robust filtering problem for the infinite horizon as follows:
\begin{problem}[$\Was$-DR-KF over infinite-horizon] \label{prob:infinite horizon drf}
     Find a casual and time-invariant filter, $\K\!\in\!\causal$, that minimizes the steady-state worst-case MSE defined in~\eqref{eq:infinite horizon worst case MSE}, \ie,
     %\vspace{-2mm}
    \begin{equation} \label{eq:infinite horizon drf}
    %\vspace{-1mm}
    \inf_{\K \in \causal} \overline{\wMSE}(\K, \rho) = \inf_{\K \in \causal} \limsup_{T\to \infty}  \frac{1}{T} \sup_{\Pr_T \in \W_T(\Pr_{T}^\circ,\rho_T)}  \E_{\Pr_T} \br{  \norm{\ee_T(\bm{\xi}_T,\K)}^2}.
    %\vspace{-1mm}
\end{equation}
\end{problem}

%\vspace{-2mm}
\section{Tractable Convex Formulations} \label{sec:theory}
%\vspace{-2mm}
In this section, we provide tractable formulations for the finite and infinite-horizon $\Was$-DR-KF problems. In \cref{thm:finite horizon sdp}, we present an SDP formulation for the finite-horizon problem~\ref{prob:finite horizon drf}. In \cref{thm:dual formulation}, we reduce the infinite-horizon problem~\ref{prob:infinite horizon drf} to a tractable convex program via duality. We also characterize the optimal estimator and the worst-case distribution for both settings. The proofs of the theorems presented in this section are deferred to the Appendix.

Before proceeding with the main theorems, we present a minimax theorem establishing the optimality of linear filtering policies for Gaussian nominal distributions.
\begin{theorem}[Minimax duality]\label{thm:minimax}
Let $T\>0$ be a fixed horizon and $\Pi_T$ be the class of non-linear causal estimators. Suppose that the nominal $\Pr^\circ_T$ is Gaussian. Then, the following holds:
%\vspace{-2mm}
    \begin{equation}\label{eq:minimax}
    %\vspace{-2mm}
        \inf_{\pi_T \in \Pi_T} \sup_{\Pr_T \in \W_T(\Pr^\circ_T, \rho_T)} \E_{\Pr_T} \br{  \norm{\ee_T(\bm{\xi}_T,\pi_T)}^2} =  \sup_{\Pr_T \in \W_T(\Pr^\circ_T, \rho_T)} \inf_{\pi_T \in \Pi_T} \E_{\Pr_T} \br{  \norm{\ee_T(\bm{\xi}_T,\pi_T)}^2}, 
    \end{equation}
Moreover, \eqref{eq:minimax} admits a saddle point $(\pi_T^\star, \Pr_T^\star)$ such that the worst-case distribution $\Pr_T^\star$ is Gaussian and the optimal causal filter $\pi_T^\star$ is linear, \ie, $\pi_T^\star \in \causal_T$.
\end{theorem}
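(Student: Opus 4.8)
The plan is to sandwich the nonlinear value between the trivial weak-duality bound and a purely \emph{linear} surrogate, and to collapse the sandwich using two facts: (i) for a quadratic loss and a Gaussian nominal, the worst-case law over the $\Was$-ball is Gaussian, and (ii) for a jointly Gaussian law the optimal causal estimator is linear. Write $V_\star$ for the left-hand (inf--sup) side of \eqref{eq:minimax}, $V^\star$ for the right-hand (sup--inf) side, and
\[
V_{\mathrm{lin}} \defeq \inf_{\K_T\in\causal_T}\ \sup_{\Pr_T\in\W_T(\Pr^\circ_T,\rho_T)} \E_{\Pr_T}\br{\norm{\ee_T(\bm{\xi}_T,\K_T)}^2}
\]
for the analogous value when the estimator is constrained to be linear. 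Weak (max--min) duality gives $V^\star\le V_\star$ for free, and since $\causal_T\subseteq\Pi_T$ restricting the infimum can only increase it, so $V_\star\le V_{\mathrm{lin}}$. It therefore suffices to prove $V_{\mathrm{lin}}\le V^\star$, which closes the chain $V^\star\le V_\star\le V_{\mathrm{lin}}\le V^\star$ and forces equality throughout.

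First I would carry out a covariance reduction. Because $\bm{\xi}_T$ is zero-mean and the loss is quadratic, for a linear filter the objective depends on $\Pr_T$ only through $\Sigma_{\Pr_T}\defeq\E_{\Pr_T}[\bm{\xi}_T\bm{\xi}_T^\tp]$, namely $\E_{\Pr_T}[\norm{\ee_T}^2]=\tr(\T_{\K_T}^\ast\T_{\K_T}\,\Sigma_{\Pr_T})$. The Gelbrich (Gaussian--Wasserstein) inequality states that among all laws with a prescribed mean and covariance the Gaussian is $\Was$-closest to the Gaussian nominal; hence $\Was(\Pr_T,\Pr^\circ_T)\ge \Was(\normal(0,\Sigma_{\Pr_T}),\Pr^\circ_T)$, so $\Pr_T\in\W_T$ implies $\normal(0,\Sigma_{\Pr_T})\in\W_T$. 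Consequently the set of attainable covariances coincides with the \emph{Gelbrich ball} $\Ss\defeq\{\Sigma\psdgeq 0 : \Was(\normal(0,\Sigma),\Pr^\circ_T)\le\rho_T\}$, which is compact and convex: writing $\Sigma_\circ$ for the nominal covariance, its defining function $\Sigma\mapsto\Was(\normal(0,\Sigma),\Pr^\circ_T)^2=\tr(\Sigma)+\tr(\Sigma_\circ)-2\tr((\Sigma_\circ^{1/2}\Sigma\Sigma_\circ^{1/2})^{1/2})$ is convex, since the last trace is concave in $\Sigma$. This yields $\sup_{\Pr_T\in\W_T}\tr(\T_{\K_T}^\ast\T_{\K_T}\Sigma_{\Pr_T})=\sup_{\Sigma\in\Ss}\tr(\T_{\K_T}^\ast\T_{\K_T}\Sigma)$, reducing the infinite-dimensional maximization over measures to one over a compact convex matrix set.

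Next I would apply a finite-dimensional minimax theorem. On $\causal_T\times\Ss$ the map $(\K_T,\Sigma)\mapsto\tr(\T_{\K_T}^\ast\T_{\K_T}\Sigma)$ is convex (indeed quadratic) in $\K_T$ and linear, hence concave and continuous, in $\Sigma$; since $\Ss$ is compact convex and $\causal_T$ is a convex (linear) set, Sion's minimax theorem applies and gives
\[
V_{\mathrm{lin}}=\inf_{\K_T\in\causal_T}\sup_{\Sigma\in\Ss}\tr(\T_{\K_T}^\ast\T_{\K_T}\Sigma)=\sup_{\Sigma\in\Ss}\inf_{\K_T\in\causal_T}\tr(\T_{\K_T}^\ast\T_{\K_T}\Sigma).
\]
For each $\Sigma\in\Ss$ the law $\normal(0,\Sigma)$ makes $(\s_T,\y_T)$ jointly Gaussian, so the causal MMSE estimator $\widehat{s}_t=\E[s_t\mid y_0,\dots,y_t]$ is linear in the measurements; thus the unconstrained causal infimum coincides with the linear one, $\inf_{\pi_T\in\Pi_T}\E_{\normal(0,\Sigma)}[\norm{\ee_T}^2]=\inf_{\K_T\in\causal_T}\tr(\T_{\K_T}^\ast\T_{\K_T}\Sigma)$. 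Taking the supremum over $\Sigma\in\Ss$ and noting that every such Gaussian lies in $\W_T$ gives $V_{\mathrm{lin}}=\sup_{\Sigma\in\Ss}\inf_{\pi_T}\E_{\normal(0,\Sigma)}[\norm{\ee_T}^2]\le\sup_{\Pr_T\in\W_T}\inf_{\pi_T}\E_{\Pr_T}[\norm{\ee_T}^2]=V^\star$, the missing inequality.

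This closes the sandwich and establishes \eqref{eq:minimax}. The saddle point is then read off: let $\Sigma^\star$ attain the outer supremum in the reduced problem and $\K_T^\star\in\causal_T$ the inner infimum against $\Sigma^\star$; then $\Pr_T^\star\defeq\normal(0,\Sigma^\star)$ is Gaussian, $\K_T^\star$ is optimal even among nonlinear estimators against $\Pr_T^\star$ (conditional-mean argument), and $\Pr_T^\star$ is worst-case against $\K_T^\star$ (covariance reduction), so $(\K_T^\star,\Pr_T^\star)$ is a saddle point with the advertised structure. The main obstacle is the strong-duality interchange: a direct Sion argument on the space of measures is delicate because closed $\Was$-balls are not norm-compact and the quadratic objective is only lower-semicontinuous under weak convergence. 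The covariance reduction is exactly what circumvents this, trading the infinite-dimensional measure problem for a compact finite-dimensional one; the remaining care is in verifying the Gelbrich extremality of Gaussians and the convexity and compactness of $\Ss$.
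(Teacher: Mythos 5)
Your proof is correct and takes essentially the same route as the paper's: weak duality, the Gelbrich bound to reduce the ambiguity set to a Bures--Wasserstein ball of covariance matrices, a finite-dimensional convex--concave minimax interchange (the paper asserts it from convexity/concavity; you invoke Sion with compactness of the Gelbrich ball), and Gaussian conditional-mean linearity to close the chain and read off the Gaussian worst-case law and linear optimal filter. The only difference is organizational---you sandwich through the linear inf--sup value $V_{\mathrm{lin}}$ rather than bounding both sides of the weak-duality inequality by the finite-dimensional saddle value---which is not a substantive departure.
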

For simplicity and clarity, we make the following assumption for the remainder of this paper.
\begin{assumption}\label{asmp: nominal disturbance}
    The nominal disturbances are uncorrelated, \ie, 
    $\E_{\Pr^\circ_T}\br{\bm{\xi}_T \bm{\xi}_T^\ast  }= \I_T $ for any $T\>0$.
\end{assumption}

\subsection{An SDP for the Finite-Horizon Filtering}
In this section, we state the SDP formulation of \cref{prob:finite horizon drf} for a fixed horizon $T\>0$. To this end, we first state the following lemma identifying the optimal non-causal estimator.
\begin{lemma}\label{lem:finite horizon non-causal}
    Under the \cref{asmp: nominal disturbance}, $\K^\circ_T \triangleq \L_T \HH_T^\ast (\I_T \+ \HH_T \HH_T^\ast)^\inv$ is the unique, optimal, non-causal estimator minimizing the worst-case MSE in \eqref{eq:finite horizon worst case MSE} for any $\rho_T\!\geq\!0$.
\end{lemma}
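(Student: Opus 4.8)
The plan is to reduce the inner worst-case expectation to a maximization over second-moment matrices, and then to exhibit an explicit saddle point of the resulting convex–concave game whose minimizing component is exactly $\K^\circ_T$. Since $\ee_T(\bm{\xi}_T,\K_T)=\T_{\K_T}\bm{\xi}_T$, the objective equals $\tr\!\big(M_{\K_T}\,\E_{\Pr_T}[\bm{\xi}_T\bm{\xi}_T^\ast]\big)$ with $M_{\K_T}\defeq\T_{\K_T}^\ast\T_{\K_T}\psdg 0$, so it depends on $\Pr_T$ only through its second-moment matrix $\Sigma$. First I would use the standard correspondence between the $\Was$-ball and the Gelbrich (Bures) ball of covariances (cf.\ \cite{nguyen_bridging_2021,DRORO}), together with \cref{asmp: nominal disturbance} (nominal second moment $\I_T$), to rewrite $\wMSE_T(\K_T,\rho_T)=\sup_{\Sigma\in\mathcal{B}}\tr(M_{\K_T}\Sigma)$, where the identity $\tr(\Sigma+\I_T-2\sqrt{\Sigma})=\norm[\HS]{\sqrt{\Sigma}-\I_T}^2$ collapses the Bures constraint around $\I_T$ into $\mathcal{B}=\{\Sigma\psdg 0 : \norm[\HS]{\sqrt{\Sigma}-\I_T}\le\rho_T\}$. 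Because the policies here are linear through the origin, this works with the full second moment (not the centered covariance), and one checks that covariance perturbations dominate mean shifts, so the supremum is attained by a zero-mean distribution.

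Next I would solve the inner concave maximization for the candidate $\K^\circ_T$ in closed form. Writing $S=\sqrt{\Sigma}$ and forming the Lagrangian $\tr(M_{\K^\circ_T}S^2)-\gamma\big(\norm[\HS]{S-\I_T}^2-\rho_T^2\big)$, stationarity at the optimal $S^\star$ (which commutes with $M_{\K^\circ_T}$) gives $S^\star=\gamma(\gamma\I_T-M_{\K^\circ_T})^{-1}$, hence the worst-case second moment $\Sigma^\star=\gamma^2(\gamma\I_T-M_{\K^\circ_T})^{-2}\psdg 0$, with scalar multiplier $\gamma>\lambda_{\max}(M_{\K^\circ_T})$ calibrated so that $\norm[\HS]{\sqrt{\Sigma^\star}-\I_T}=\rho_T$ (and $\Sigma^\star=\I_T$ when $\rho_T=0$). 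Existence and uniqueness of such a $\gamma$ follow since the deviation $\sum_i \lambda_i^2/(\gamma-\lambda_i)^2$ (over the eigenvalues $\lambda_i$ of $M_{\K^\circ_T}$) decreases continuously from $+\infty$ to $0$ as $\gamma$ ranges over $(\lambda_{\max},\infty)$.

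The crux is to verify that $\K^\circ_T$ minimizes $\tr(M_{\K_T}\Sigma^\star)$ over all (unconstrained) $\K_T$, which closes the saddle point. Setting $G\defeq\begin{bmatrix}\HH_T & \I_T\end{bmatrix}$ so that $\y_T=G\bm{\xi}_T$ and $\T_{\K_T}=\K_T G-\begin{bmatrix}\L_T & 0\end{bmatrix}$, the convex map $\K_T\mapsto\tr(\T_{\K_T}\Sigma^\star\T_{\K_T}^\ast)$ has gradient $2\,\T_{\K_T}\Sigma^\star G^\ast$, so $\K_T$ is optimal iff $\T_{\K_T}\Sigma^\star G^\ast=0$. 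For $\K^\circ_T$, the push-through identity $\T(\gamma\I_T-\T^\ast\T)^{-2}=(\gamma\I_T-\T\T^\ast)^{-2}\T$ yields $\T_{\K^\circ_T}\Sigma^\star=\gamma^2(\gamma\I_T-\T_{\K^\circ_T}\T_{\K^\circ_T}^\ast)^{-2}\T_{\K^\circ_T}$, while the defining relation $\K^\circ_T(\I_T+\HH_T\HH_T^\ast)=\L_T\HH_T^\ast$ is precisely the nominal orthogonality $\T_{\K^\circ_T}G^\ast=\K^\circ_T(\I_T+\HH_T\HH_T^\ast)-\L_T\HH_T^\ast=0$. Hence $\T_{\K^\circ_T}\Sigma^\star G^\ast=0$, and therefore
\begin{equation*}
\wMSE_T(\K^\circ_T,\rho_T)=\tr(M_{\K^\circ_T}\Sigma^\star)\le\tr(M_{\K_T}\Sigma^\star)\le\sup_{\Sigma\in\mathcal{B}}\tr(M_{\K_T}\Sigma)=\wMSE_T(\K_T,\rho_T)
\end{equation*}
for every $\K_T$, proving optimality of $\K^\circ_T$ at every radius $\rho_T\ge 0$. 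The point explaining the $\rho$-independence of $\K^\circ_T$ is that the nominal orthogonality annihilates the cross-moment against \emph{any} $\Sigma^\star$ that is a matrix function of $M_{\K^\circ_T}$.

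Uniqueness follows because $G\Sigma^\star G^\ast=\begin{bmatrix}\HH_T & \I_T\end{bmatrix}\Sigma^\star\begin{bmatrix}\HH_T & \I_T\end{bmatrix}^\ast\psdg 0$ (as $\Sigma^\star\psdg 0$ and $G$ has full row rank by its identity block), so $\K_T\mapsto\tr(M_{\K_T}\Sigma^\star)$ is \emph{strictly} convex; tracing equality through the chain above forces any optimal $\K_T$ to be the minimizer of $\tr(M_{\K_T}\Sigma^\star)$, whence $\K_T=\K^\circ_T$. I expect the main obstacle to be the first step: rigorously justifying the reduction to the Bures ball of covariances — in particular confirming that the supremum is attained by a zero-mean distribution so that the second moment coincides with the covariance, and establishing the closed-form inner maximizer via KKT. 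Once $\Sigma^\star$ is in hand, the saddle-point verification is the short push-through computation above.
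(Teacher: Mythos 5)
Your proposal is correct, and it reaches the conclusion by a genuinely different route than the paper. The paper's proof goes through its strong-duality result (\cref{thm: strong duality finite horizon}, imported from \cite{DRORO}): it rewrites the worst-case MSE as a dual minimization over $\gamma$, applies the completion-of-squares identity $\T_{\K_T}\T_{\K_T}^\ast = (\K_T-\K_T^\circ)(\I_T+\HH_T\HH_T^\ast)(\K_T-\K_T^\circ)^\ast + \T_{\K_T^\circ}\T_{\K_T^\circ}^\ast$, and invokes operator monotonicity of $\clf{X}\mapsto(\I_T-\clf{X})^\inv$ to conclude that, for every fixed $\gamma$, the inner minimization is solved by $\K_T^\circ$ — which is why the optimizer is radius-independent. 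You instead stay in the primal: reduce the Wasserstein ball to the Bures ball of second moments, exhibit the explicit worst-case covariance $\Sigma^\star=\gamma^2(\gamma\I_T-M_{\K_T^\circ})^{-2}$ for the candidate filter, and certify a saddle point by combining the orthogonality relation $\T_{\K_T^\circ}G^\ast=0$ with the push-through identity; uniqueness then follows from strict convexity of $\K_T\mapsto\Tr(M_{\K_T}\Sigma^\star)$, since $G\Sigma^\star G^\ast\psdg 0$. The two arguments hinge on the same algebra (your orthogonality relation is exactly the cross-term cancellation behind the paper's completion of squares), but your architecture buys self-containedness — you need only the elementary Gelbrich-type reduction rather than the full strong-duality theorem — plus an explicit worst-case covariance and an explicit uniqueness argument (which the paper leaves implicit); the paper's architecture buys reuse, since the same duality and completion-of-squares machinery feeds directly into the SDP of \cref{thm:finite horizon sdp} and the infinite-horizon results.

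Two points in your step 1 and step 2 deserve tightening, though neither is a real gap. For the mean-shift issue, the cleanest fix is to apply the Gelbrich-type bound directly to \emph{second-moment} matrices: for any coupling, $\E\norm{\bm{\xi}_1-\bm{\xi}_2}^2 \geq \Tr(\Sigma_1+\Sigma_2) - 2\Tr\bigl(\bigl(\Sigma_1^{1/2}\Sigma_2\Sigma_1^{1/2}\bigr)^{1/2}\bigr)$ with $\Sigma_i$ the (uncentered) second moments, which gives the inclusion direction without splitting off means; the reverse direction follows from the linear pushforward $\bm{\xi}=\Sigma^{1/2}\bm{\xi}^\circ$ of the (possibly non-Gaussian) nominal. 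For the inner maximization, note that your Lagrangian in $S=\sqrt{\Sigma}$ maximizes a \emph{convex} function, so stationarity there is not by itself sufficient; but the same stationary point is recovered by KKT in the $\Sigma$ parametrization, where the objective $\Tr(M\Sigma)$ is linear and the Bures constraint is convex (with Slater point $\Sigma=\I_T$), so sufficiency holds and your $\Sigma^\star$ is indeed the maximizer.
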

This result highlights the triviality of non-causal estimation as opposed to causal estimation. In \cref{thm:finite horizon sdp}, we demonstrate that the finite-horizon $\Was$-DR-KF problem~\ref{prob:finite horizon drf} reduces to an SDP.
\begin{theorem}[An SDP formulation for finite-horizon $\Was$-DR-KF] \label{thm:finite horizon sdp}
    Let the horizon $T\>0$ be fixed and denote $\T_{\K_T^\circ}\T_{\K_T^\circ}^\ast\! \defeq \!\L_T(\I_T\+\HH_T^\ast \HH_T)^\inv \L_T^\ast$. Then, the \cref{prob:finite horizon drf} reduces to the following SDP
    \begin{equation*}\label{eq:finite horizon sdp}
        \inf_{\substack{\K_T \in \causal_T,\\ \gamma\geq 0,\, \mathcal{X}_T \in \Sym_{+}^{T\d_s} }}\!\!\gamma (\rho_T^2 \-  \Tr(\I_T)) \+ \Tr(\mathcal{X}_T )\;\; \subjto \;\;  
        \begin{bmatrix}
         \mathcal{X}_T \!&\! \gamma \I_T \!&\! 0 \\
         \gamma \I_T  \!&\! \gamma \I_T \- \T_{\K_T^\circ}\T_{\K_T^\circ}^\ast  \!&\!  \K_T \- \K_T^\circ \\
         0 \!&\! (\K_T\-\K_T^\circ)^\ast \!&\! (\I_T\+\HH_T\HH_T^\ast)^\inv
        \end{bmatrix}\! \psdgeq\!0.
    \end{equation*}
    Moreover, the worst-case disturbance $\bm{\xi}_T^\star$ can be identified from the nominal disturbances $\bm{\xi}_T^\circ$ as 
    \begin{equation}
        \bm{\xi}_T^\star = (\I_T-\gamma_{\star}^\inv \T_{\K_T^\star}^\ast \T_{\K_T^\star})^\inv \bm{\xi}_T^\circ,
    \end{equation}
    where $\gamma_\star\>0$ and $\K_T^\star$ are the optimal solutions.
\end{theorem}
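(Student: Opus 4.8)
The plan is to evaluate the inner worst‑case expectation in closed form, thereby reducing the distributional supremum to a finite‑dimensional covariance program over a Gelbrich (Bures–Wasserstein) ball, and then to dualize this program and apply two Schur‑complement arguments to arrive at the stated LMI. Since the error is linear, $\ee_T \= \T_{\K_T}\bm{\xi}_T$, the objective is the quadratic form $\E_{\Pr_T}[\norm{\ee_T}^2] = \Tr\!\big(\T_{\K_T}^\ast\T_{\K_T}\,\E_{\Pr_T}[\bm{\xi}_T\bm{\xi}_T^\ast]\big)$ and hence depends on $\Pr_T$ only through its first two moments. Invoking the worst‑case‑quadratic characterization for $\Was$‑balls, I would replace the supremum over distributions by a supremum over covariances in the Gelbrich ball: allowing a nonzero mean $\mu$ contributes $\mu^\ast \T_{\K_T}^\ast\T_{\K_T}\,\mu$ at a $\Was^2$‑budget cost $\norm{\mu}^2$, which is non‑improving once the dual multiplier exceeds $\lambda_{\max}(\T_{\K_T}^\ast\T_{\K_T})$, so the worst case stays zero‑mean. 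Using \cref{asmp: nominal disturbance} ($\Sigma_\circ \= \I_T$), this yields $\wMSE_T(\K_T,\rho_T) = \sup\{\Tr(M\Sigma): \Tr(\Sigma\+\I_T\-2\Sigma^{1/2})\leq\rho_T^2,\ \Sigma\succeq 0\}$ with $M\defeq\T_{\K_T}^\ast\T_{\K_T}$.

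Next I would dualize the Bures constraint with a multiplier $\gamma\geq 0$. Diagonalizing $M$, stationarity in $\Sigma$ gives the worst‑case covariance $\Sigma^\star = (\I_T\-\gamma^\inv M)^{-2}$, finite precisely when $\gamma \> \lambda_{\max}(M)$, and the optimal value collapses to $\inf_{\gamma > \lambda_{\max}(M)}\gamma^2\Tr\!\big((\gamma\I_T\-M)^\inv\big) + \gamma(\rho_T^2\-\Tr(\I_T))$. Because $\Sigma_\circ\=\I_T$ commutes with $\Sigma^\star$, the Bures‑optimal transport is the linear map $(\Sigma^\star)^{1/2} = (\I_T\-\gamma^\inv M)^\inv$, so the worst‑case disturbance is $\bm{\xi}_T^\star = (\I_T\-\gamma_\star^\inv\T_{\K_T^\star}^\ast\T_{\K_T^\star})^\inv\bm{\xi}_T^\circ$, which is exactly the claimed formula.

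To match the $T\d_s$‑dimensional LMI, I would then push the trace into the error space. Writing $\gamma^2(\gamma\I\-M)^\inv = \gamma\I + \gamma M(\gamma\I\-M)^\inv$ and applying the push‑through identity $M(\gamma\I\-\T_{\K_T}^\ast\T_{\K_T})^\inv = \T_{\K_T}^\ast(\gamma\I\-\T_{\K_T}\T_{\K_T}^\ast)^\inv\T_{\K_T}$, the objective becomes $\inf_\gamma \gamma^2\Tr\!\big((\gamma\I_T\-\T_{\K_T}\T_{\K_T}^\ast)^\inv\big) + \gamma(\rho_T^2\-\Tr(\I_T))$, where $\I_T$ is now the $T\d_s$ identity. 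A completion of squares in $\K_T$ (which simultaneously recovers the non‑causal optimum of \cref{lem:finite horizon non-causal}) gives $\T_{\K_T}\T_{\K_T}^\ast = \T_{\K_T^\circ}\T_{\K_T^\circ}^\ast + (\K_T\-\K_T^\circ)(\I_T\+\HH_T\HH_T^\ast)(\K_T\-\K_T^\circ)^\ast$, using the Woodbury identity $\I\-\HH_T^\ast(\I\+\HH_T\HH_T^\ast)^\inv\HH_T = (\I\+\HH_T^\ast\HH_T)^\inv$.

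Finally, I would epigraph the remaining trace by introducing $\mathcal{X}_T \succeq \gamma^2(\gamma\I_T\-\T_{\K_T}\T_{\K_T}^\ast)^\inv$, which is the top‑left $2\!\times\!2$ block by Schur complement, and linearize the quadratic‑in‑$\K_T$ term against the $(\I_T\+\HH_T\HH_T^\ast)^\inv$ block by a second Schur complement; the two stack into exactly the stated $3\!\times\!3$ LMI, and since $\K_T\in\causal_T$ is a linear (block‑lower‑triangular) constraint, the problem is an SDP. I expect the main obstacle to be the first step: rigorously justifying strong duality and attainment for the worst‑case quadratic over the full $\Was$‑ball — i.e.\ that the supremum over \emph{all} distributions (not merely Gaussians) equals the Gelbrich‑ball value and is attained by the linear pushforward of the nominal — together with the feasibility requirement $\gamma_\star \> \lambda_{\max}(\T_{\K_T^\star}^\ast\T_{\K_T^\star})$ that makes every inverse above well defined.
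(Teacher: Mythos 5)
Your proposal is correct and follows essentially the same route as the paper's proof: reduce the worst-case MSE to a $\gamma$-parametrized dual, move the trace from disturbance space into the $T\d_s$-dimensional error space, complete squares around the non-causal optimum $\K_T^\circ$ (using $\T_{\K_T^\circ}\T_{\K_T^\circ}^\ast = \L_T(\I_T\+\HH_T^\ast\HH_T)^\inv\L_T^\ast$), and then stack an epigraph variable and two Schur complements into the stated $3\times 3$ LMI. The only place you genuinely diverge is the first step: the paper does not re-derive the strong duality result, but imports it wholesale from \cite{DRORO} as \cref{thm: strong duality finite horizon}, and performs the dimension switch $\T_{\K_T}^\ast\T_{\K_T} \to \T_{\K_T}\T_{\K_T}^\ast$ via a Neumann-series expansion rather than your push-through identity (the two are equivalent). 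Your self-contained derivation of that step --- moment reduction, Gelbrich-ball relaxation, zero-mean optimality for $\gamma > \lambda_{\max}(M)$, Lagrangian stationarity giving $\Sigma^\star = (\I_T - \gamma^\inv M)^{-2}$ --- is sound, and the ``main obstacle'' you flag at the end is in fact closed by the pushforward argument you yourself sketch: since the nominal covariance is $\I_T$ (\cref{asmp: nominal disturbance}), the linear map $\Sigma_\star^{1/2} = (\I_T - \gamma_\star^\inv M)^\inv$ transports the nominal to a distribution that lies in the $\Was$-ball (its transport cost equals the Bures--Wasserstein distance of the covariances) and achieves the Gelbrich-ball value, so the supremum over all distributions equals the covariance-ball supremum and is attained by exactly the worst-case disturbance claimed in the theorem. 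So what the paper handles by citation, you prove directly; everything downstream matches the paper's argument step for step.
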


\begin{remark}\label{remark:limiting_r}
As $\rho_T \to \infty$, the ambiguity set covers all bounded energy disturbances, and the optimal $\Was$-DR-KF policy, $\mathcal{K}_T^\star$, recovers the $\Hinf$-filter. Conversely, as $\rho_T \to 0$, the ambiguity set reduces to the singleton ${\Pr^\circ_T}$, and $\mathcal{K}_T^\star$ recovers the Kalman filter. Thus, adjusting $\rho_T$ allows the DR filter to interpolate between the conservative $\Hinf$-filter and the nominal Kalman filter.
\end{remark}

\tk{Notice that the variable dimension of the SDP in \cref{thm:finite horizon sdp} scales with the horizon $T$, which can be prohibitive for practical implementation for longer horizons.}

%\begin{comment}
\begin{corollary}\label{thm:complexity of SDP}
    \tk{The time complexity of interior-point method for solving the SDP in \cref{thm:finite horizon sdp} with accuracy $\epsilon>0$ is $\widetilde{O}(\max(\d_y,\d_s)^6 \, T^6 \log(1/\epsilon))$.}
\end{corollary}

%\end{comment}

\subsection{A Concave-Convex Optimization for the Infinite-Horizon Filtering}
The scaling of the SDP in \cref{thm:finite horizon sdp} with the time horizon is prohibitive for many time-critical real-world applications. Therefore, we shift our focus to the infinite-horizon $\Was$-DR-KF problem~\ref{prob:infinite horizon drf} to derive the optimal steady-state filtering policy. 

Solving \cref{prob:infinite horizon drf} involves two major challenges. The first one is transforming the steady-state worst-case MSE for a fixed filtering policy $\K\!\in\!\causal$, as defined in \eqref{eq:infinite horizon worst case MSE}, to an equivalent convex optimization problem. We address this by leveraging the asymptotic convergence properties of Toeplitz matrices \cite{kargin_wasserstein_2023}. The second challenge is addressing the causality constraint on the estimator. To illustrate the triviality of non-causal estimation in the infinite-horizon setting, we present an analogous result as shown below:
\begin{lemma}\label{lem:infinite horizon non-causal}
    Under the Assumptions~\ref{asmp:infinite horizon assumptions} and~\ref{asmp: nominal disturbance}, $\K_\circ \defeq \L \HH^\ast (\I \+ \HH \HH^\ast)^\inv$ is the unique, optimal, non-causal estimator minimizing the steady-state worst-case MSE in \eqref{eq:infinite horizon worst case MSE} for any $\rho>0$.
\end{lemma}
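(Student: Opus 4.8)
The goal is to show that the non-causal estimator $\K_\circ = \L \HH^\ast (\I + \HH \HH^\ast)^\inv$ is uniquely optimal for the infinite-horizon worst-case MSE, mirroring the finite-horizon \cref{lem:finite horizon non-causal}. My plan is to reduce the infinite-horizon claim to the finite-horizon one by way of the time-averaging in \eqref{eq:infinite horizon worst case MSE}, and then verify that the finite-horizon optimal estimators converge, in the appropriate Toeplitz/transfer-function sense, to $\K_\circ$. The underlying intuition is that when causality is not imposed, the inner supremum over the $\Was$-ball and the outer infimum over estimators decouple cleanly: for each horizon $T$, the worst-case MSE is a monotone function of the nominal error covariance, so minimizing it over all (not necessarily causal) linear estimators is equivalent to minimizing the \emph{nominal} MSE, whose unique minimizer is the non-causal Wiener-type solution.

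First I would make precise that, for a fixed horizon $T$, the worst-case MSE $\wMSE_T(\K_T,\rho_T)$ depends on the estimator $\K_T$ only through its error transfer operator $\T_{\K_T}$, and specifically through the nominal error Gram operator $\T_{\K_T}\T_{\K_T}^\ast$ (under \cref{asmp: nominal disturbance}, where $\E_{\Pr^\circ_T}[\bm{\xi}_T\bm{\xi}_T^\ast] = \I_T$). Using the finite-horizon worst-case formula implicit in \cref{thm:finite horizon sdp}, the worst-case MSE is a monotonically increasing function of $\Tr(\T_{\K_T}\T_{\K_T}^\ast)$ (it is the value of a problem whose data is an adversarial perturbation of the identity covariance, and the penalty grows with the nominal error energy). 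Consequently, over the \emph{unrestricted} class of linear estimators (dropping block lower-triangularity), minimizing the worst-case MSE is equivalent to minimizing the nominal error energy $\Tr(\T_{\K_T}\T_{\K_T}^\ast)$. A standard orthogonality/completion-of-squares argument then shows this is minimized uniquely at $\K^\circ_T = \L_T \HH_T^\ast(\I_T + \HH_T\HH_T^\ast)^\inv$, giving \cref{lem:finite horizon non-causal}; the infinite-horizon analogue $\K_\circ$ is its Toeplitz limit, with transfer function $L(z)H(z)^\ast(I + H(z)H(z)^\ast)^\inv$ on $\TT$.

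Next I would pass to the limit. By \eqref{eq:infinite horizon worst case MSE}, $\overline{\wMSE}(\K,\rho) = \limsup_{T\to\infty} \tfrac{1}{T}\wMSE_T(\K,\rho_T)$, and I would lower-bound this for an arbitrary $\K\in\causal$ by the time-averaged nominal error energy, which by the asymptotic convergence of Toeplitz operators (the same Szeg\H{o}-type machinery cited via \cite{kargin_wasserstein_2023} that the paper uses for \cref{thm:dual formulation}) converges to the frequency-domain integral $\tfrac{1}{2\pi}\int_{\TT}\Tr(T_\K(\ejw)T_\K(\ejw)^\ast)\,\diff\omega$. Pointwise in $\omega$, the integrand is minimized uniquely by the non-causal symbol above, and since that symbol is achieved by $\K_\circ$ (the Toeplitz operator is itself a valid, though non-causal, estimator), $\K_\circ$ attains the lower bound. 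Uniqueness follows because the pointwise minimizer of $\Tr(T_\K(\ejw)T_\K(\ejw)^\ast)$ is unique for each $\omega$, so any other optimal $\K$ must agree with $\K_\circ$ almost everywhere on $\TT$, hence as operators.

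The main obstacle I anticipate is the interchange of the $\limsup$, the time-averaging $\tfrac1T$, and the inner worst-case supremum — making rigorous that the worst-case MSE per unit time converges to the clean frequency-domain integral of the nominal error energy plus the $\rho$-dependent adversarial term, and that this limit is still minimized pointwise by the non-causal symbol. This requires controlling the finite-section Toeplitz approximation uniformly enough that the optimal $\K^\circ_T$ converge to the doubly-infinite $\K_\circ$ and that their worst-case values converge to $\overline{\wMSE}(\K_\circ,\rho)$; the weak-stationarity of the nominal process (\cref{asmp:infinite horizon assumptions}) and the scaling $\rho_T \sim \rho\sqrt{T}$ are exactly what keep the per-unit-time adversarial budget finite and horizon-independent, which is the key quantitative input I would lean on.
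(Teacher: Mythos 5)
Your reduction of the worst-case problem to the nominal one rests on the claim that $\wMSE_T(\K_T,\rho_T)$ is a monotonically increasing function of the scalar $\Tr(\T_{\K_T}\T_{\K_T}^\ast)$, and this claim is false. By the strong-duality formula (\cref{thm: strong duality finite horizon}, and its infinite-horizon analogue \cref{thm: strong duality infinite horizon}), the worst-case MSE equals
\begin{equation*}
\inf_{\gamma \geq 0}\; \gamma\rho_T^2 + \gamma\Tr\br{(\I_T - \gamma^{-1}\T_{\K_T}\T_{\K_T}^\ast)^{-1} - \I_T} \quad \subjto \quad \gamma\I_T \psdg \T_{\K_T}\T_{\K_T}^\ast,
\end{equation*}
which depends on the entire spectrum of $\T_{\K_T}\T_{\K_T}^\ast$, not just its trace: the adversary allocates its Wasserstein budget preferentially along directions where the nominal error is largest, so two estimators with equal nominal error energy but different spectral profiles generally have different worst-case MSEs. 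Consequently your equivalence ``minimizing worst-case MSE over unrestricted estimators $\Leftrightarrow$ minimizing nominal MSE'' does not follow. What actually makes the lemma true --- and what the paper's proof uses --- is the stronger operator-level statement: the completion-of-squares identity $\T_\K\T_\K^\ast = (\K-\K_\circ)(\I + \HH\HH^\ast)(\K-\K_\circ)^\ast + \T_{\K_\circ}\T_{\K_\circ}^\ast \psdgeq \T_{\K_\circ}\T_{\K_\circ}^\ast$ shows that $\K_\circ$ minimizes $\T_\K\T_\K^\ast$ in the Loewner (positive-semidefinite) order, and for each fixed $\gamma$ the dual objective is monotone with respect to that order, by operator monotonicity of $\clf{X}\mapsto(\I-\clf{X})^{-1}$ composed with the trace; hence $\K_\circ$ is optimal for every feasible $\gamma$ and therefore overall, with uniqueness from $\I + \HH\HH^\ast \psdg 0$. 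Your completion-of-squares is invoked only at the level of the trace, which is precisely where the argument breaks.

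Separately, the limit interchange you flag as ``the main obstacle'' is a gap you leave open, and it is the part the paper deliberately avoids: rather than taking limits of finite-horizon minimizers via Szeg\H{o}-type asymptotics, the paper invokes the infinite-horizon strong duality theorem (proved in the cited prior work by substituting $\T_\K\T_\K^\ast$ for the operator there denoted $\CC_\K$), which directly expresses $\overline{\wMSE}(\K,\rho)$ as the dual program over $\gamma$ for any bounded LTI, possibly non-causal, estimator; the minimization over $\K$ is then carried out inside the dual at fixed $\gamma$, so no finite-section convergence argument is ever needed. If you repair the monotonicity step to the Loewner order, your frequency-domain route could be made to work (pointwise PSD-minimality of the non-causal symbol does transfer to the dual objective), but the convergence of the time-averaged worst-case values would still require exactly the machinery you defer; as written, the proposal contains both an incorrect step and an unfinished one.
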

We address the causality constraint by reformulating \cref{prob:infinite horizon drf} as a max-min optimization, where the inner minimization over the causal filtering policies is performed using the Wiener-Hopf technique \cite{wiener1931klasse,kailath_linear_2000} (see \cref{lem:wiener-hopf}). To this end, we introduce the \emph{canonical spectral factorization}\footnote{Essentially Cholesky factorization for Toeplitz operators.} $$\Delta \Delta^\ast = \I \+ \HH \HH^\ast,$$ where both $\Delta$ and its inverse $\Delta^\inv$ are causal operators. We state the equivalent formulation for the infinite-horizon $\Was$-DR-KF as follows.
\begin{theorem}[Convex formulation of infinite-horizon $\Was$-DR-KF]\label{thm:dual formulation}
    Under the Assumptions~\ref{asmp:infinite horizon assumptions} and~\ref{asmp: nominal disturbance}, the  \cref{prob:infinite horizon drf} is equivalent to the following feasible max-min problem:
    %\vspace{-2mm}
    \begin{equation}
    \label{eq:infinite horizon drf convex}
    %\vspace{-2mm}
        \sup_{\M \psdg 0}  \inf_{\K \in \causal} \Tr(\T_\K \T_\K^\ast \M) \quad \mathrm{s.t.} \quad \Tr(\M - 2\sqrt{\M} + \I) \leq \rho^2.
    \end{equation}
    Defining $\K_{\Htwo}\defeq \{\K_\circ \Delta\}_{\!+}\Delta^\inv$, the unique saddle point $(\K_\star,\M_\star)$ of  \eqref{eq:infinite horizon drf convex} satisfies the following:
    %\vspace{-2mm}
    \begin{subequations}\label{eq:optimal K and M}
    \begin{equation}
         \K_\star = \K_{\Htwo} +  \U_\star^{-1}\cl{ \U_\star \{\K_\circ \Delta\}_{-} }_{+} \Delta^\inv,  \label{eq:optimal K from KKT} 
    \end{equation}
    \begin{equation}
         \M_\star = (\I-\gamma_\star^\inv \T_{\K_\star} \T_{\K_\star}^\ast)^{-2}, \label{eq:optimal M from KKT} 
    \end{equation}
    \end{subequations}
where $\U_\star^\ast \U_\star = \M_\star$ is the canonical spectral factorization with causal $\U_\star$ and $\U_\star^{-1}$, and $\gamma_\star>0$ is the unique value satisfying the constraint with equality, \ie, 
\begin{equation}
    \Tr\br{\pr{(\I-\gamma_\star^\inv \T_{\K_\star} \T_{\K_\star}^\ast)^\inv -\I }^2} = \rho^2.
\end{equation}
%$\Tr(\M_\star \- 2\sqrt{\M_\star} \+ \I)\!= \!\rho^2$ %$\Tr([(I-\gamma_\star^\inv \T_{\K_\star} \T_{\K_\star}^\ast)^{-1} - \I]^2) = \rho^2$.
\end{theorem}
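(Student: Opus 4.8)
The plan is to prove the equivalence in two movements and then read off the saddle point. First, for a \emph{fixed} linear policy $\K$, I would collapse the inner supremum over distributions to one over covariances. Since $\E_{\Pr_T}[\norm{\ee_T}^2] = \Tr(\T_{\K_T}^\ast\T_{\K_T}\,\E_{\Pr_T}[\bm{\xi}_T\bm{\xi}_T^\ast])$ depends on $\Pr_T$ only through its second moment, the known reduction of the worst case of a quadratic form over a $\Was$-ball to a Bures--Wasserstein program over covariances applies; with \cref{asmp: nominal disturbance} ($\Sigma_\circ=\I_T$) this gives $\wMSE_T(\K_T,\rho_T)=\sup_{\Sigma_T\psdg0}\Tr(\T_{\K_T}^\ast\T_{\K_T}\Sigma_T)$ subject to $\Tr(\Sigma_T-2\sqrt{\Sigma_T}+\I_T)\le\rho_T^2$.

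Next I would switch to the error-side variable and pass to the limit. Diagonalizing $\T_{\K_T}$ by its singular value decomposition, the directions annihilated by $\T_{\K_T}$ (or its adjoint) are \emph{free}: at $\sigma=1$ the integrand $\sigma-2\sqrt{\sigma}+1$ vanishes, so such directions cost nothing in either objective or constraint. Consequently $\sup_{\Sigma_T}\Tr(\T_{\K_T}^\ast\T_{\K_T}\Sigma_T)=\sup_{\M_T}\Tr(\T_{\K_T}\T_{\K_T}^\ast\M_T)$ under the identical constraint, and I may optimize the error-space $\M_T$ instead. Dividing by $T$, setting $\rho_T^2=\rho^2 T$, and taking $\limsup_{T\to\infty}$, I invoke the asymptotic equivalence of Toeplitz matrices and their symbols \cite{kargin_wasserstein_2023} to turn each normalized trace into an integral over $\TT$, converting the finite programs into the single operator program $\inf_{\K\in\causal}\sup_{\M\psdg0}\Tr(\T_\K\T_\K^\ast\M)$ subject to $\Tr(\M-2\sqrt{\M}+\I)\le\rho^2$. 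The feasible set is nonempty ($\M=\I$) and convex, because $\M\mapsto\Tr(\sqrt{\M})$ is concave; the objective is linear (hence concave) in $\M$ and a convex quadratic in $\K$ (as $\Tr(\T_\K\T_\K^\ast\M)$ with $\M\psdg0$). With a coercivity/compactness check I then apply Sion's minimax theorem to exchange the order and obtain $\sup_{\M}\inf_{\K}$.

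For the saddle point I would solve the two problems in turn. Fixing $\M$ and writing the canonical factorization $\M=\U^\ast\U$, the objective becomes $\norm[\HS]{\U\T_\K}^2$; the causality-preserving change of variable $\tilde\K=\U\K$ reduces the inner minimization to a weighted causal Wiener problem, solved by the Wiener--Hopf projection of \cref{lem:wiener-hopf} with the spectral factor $\Delta$, $\Delta\Delta^\ast=\I+\HH\HH^\ast$. Undoing the substitution and splitting $\K_\circ\Delta=\{\K_\circ\Delta\}_{+}+\{\K_\circ\Delta\}_{-}$ (the causal part reproducing $\K_{\Htwo}=\{\K_\circ\Delta\}_{+}\Delta^\inv$) yields \eqref{eq:optimal K from KKT}. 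For the outer concave maximization I write the Lagrangian with multiplier $\gamma$ for the Bures--Wasserstein constraint; by Danskin's theorem the gradient of the inner value in $\M$ is $\T_{\K_\star}\T_{\K_\star}^\ast$, and using $\tfrac{\diff}{\diff\M}\Tr(\sqrt{\M})=\tfrac12\M^{-1/2}$ stationarity reads $\T_{\K_\star}\T_{\K_\star}^\ast=\gamma_\star(\I-\M_\star^{-1/2})$, i.e. \eqref{eq:optimal M from KKT}; complementary slackness with $\gamma_\star>0$ activates the constraint, and substituting $\sqrt{\M_\star}=(\I-\gamma_\star^\inv\T_{\K_\star}\T_{\K_\star}^\ast)^\inv$ gives the stated trace equation for $\gamma_\star$.

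The hardest part will be the infinite-horizon passage: rigorously interchanging $\limsup_{T\to\infty}$ with the covariance supremum and proving that the finite-$T$ Bures--Wasserstein programs converge, as normalized traces and at the level of symbols, to the operator program, which is exactly where the Toeplitz asymptotics of \cite{kargin_wasserstein_2023} must be deployed with care. A secondary but essential check is the $\Hinf$-type admissibility $\gamma_\star>\norm[\op]{\T_{\K_\star}}^2$ that keeps $\I-\gamma_\star^\inv\T_{\K_\star}\T_{\K_\star}^\ast\psdg0$ so that $\M_\star\psdg0$ is well defined, together with uniqueness of the saddle point, which I would obtain from strict concavity of $\M\mapsto-\Tr(\sqrt{\M})$ on the active constraint and strict convexity of the inner estimation objective.
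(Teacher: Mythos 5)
Your proposal is correct, and it reaches the saddle-point characterization \eqref{eq:optimal K and M} by essentially the same final steps as the paper (Wiener--Hopf for $\K_\star$ via \cref{lem:wiener-hopf}, Danskin/KKT stationarity with $\ddx{\M}\Tr\sqrt{\M}=\tfrac12\M^{-1/2}$ for $\M_\star$ and $\gamma_\star$), but the route to the max-min form \eqref{eq:infinite horizon drf convex} is genuinely different. You stay primal throughout: you reduce the inner supremum over distributions to a Bures--Wasserstein program over disturbance covariances (exact under \cref{asmp: nominal disturbance}, since the Gelbrich bound gives one direction and the linear pushforward $\bm{\xi}\mapsto\Sigma^{1/2}\bm{\xi}$ gives achievability), switch from $\T_\K^\ast\T_\K$ to $\T_\K\T_\K^\ast$ by the SVD/null-direction argument (null directions sit at the zero-cost point $\sigma=1$ of $\sigma-2\sqrt{\sigma}+1$), pass to the Toeplitz limit, and then apply Sion's theorem once. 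The paper instead goes dual: \cref{thm: strong duality infinite horizon} converts the worst-case MSE into a scalar-$\gamma$ Lagrangian dual (where the $\T_\K^\ast\T_\K\leftrightarrow\T_\K\T_\K^\ast$ switch is done by the matrix-inversion lemma and a Neumann series), then \cref{thm: fenchel dual} re-introduces $\M$ through the Fenchel representation $\Tr(\mathcal{X}^{-1})=\sup_{\M\psdg0}-\Tr(\mathcal{X}\M)+2\Tr(\sqrt{\M})$, and finally exchanges $\inf_\gamma\sup_\M$ so that the inner minimization over $\gamma$ regenerates the hard constraint. Your path makes the origin of the constraint $\Tr(\M-2\sqrt{\M}+\I)\le\rho^2$ transparent (it is literally the covariance ambiguity set moved to the error side) and needs only one minimax exchange; its cost is that the Sion step requires a compactness argument for the operator-valued feasible set that is delicate in infinite dimensions, and your finite-$T$-plus-limit covariance program is in effect a re-derivation of the content of \cref{thm: strong duality infinite horizon}. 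The paper's layered duality keeps the exchanged dual variable scalar ($\gamma$), which softens the topological issues, at the price of a less direct derivation; both treatments lean equally on the Toeplitz asymptotics of the cited prior work for the $T\to\infty$ passage, which you correctly flag as the step demanding the most care.
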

The optimal linear filter $\K_\star$, comprises the nominal Kalman (aka $\Htwo$) filter, $\K_{\Htwo}$ and an additive correction term that accounts for the correlations within the disturbance process. The correction term is derived directly from the optimal solution $\M_\star$ of \eqref{eq:infinite horizon drf convex} through spectral factorization. 

As a result of devising infinite-horizon filters achieving finite optimal value in \eqref{eq:infinite horizon drf convex}, we can deduce the boundedness of the steady-state error covariance.
\begin{corollary}\label{thm:stability}
 The steady-state error has bounded covariance under the optimal $\K_\star$ in \eqref{eq:optimal K and M}.
\end{corollary}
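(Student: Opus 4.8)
The plan is to identify the steady-state error process under $\K_\star$ as a weakly stationary process whose lag-zero covariance coincides, in trace, with the finite optimal value of the convex program in \cref{thm:dual formulation}; boundedness of the covariance then follows from finiteness of that value. First I would describe the worst-case error. Since the saddle point $(\K_\star,\M_\star)$ of \eqref{eq:infinite horizon drf convex} exists with $\gamma_\star>0$ and the nominal covariance equals $\I$ (\cref{asmp: nominal disturbance}), the worst-case disturbance is the infinite-horizon analogue of the finite-horizon one in \cref{thm:finite horizon sdp}, with covariance operator $\Sigma_\star\defeq(\I-\gamma_\star^\inv\T_{\K_\star}^\ast\T_{\K_\star})^{-2}$, the disturbance-side counterpart of $\M_\star$ in \eqref{eq:optimal M from KKT}. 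As $\K_\star$, $\T_{\K_\star}$ and $\Sigma_\star$ are bounded, causal, time-invariant Toeplitz operators, the error $\ee=\T_{\K_\star}\bm{\xi}^\star$ is zero-mean and weakly stationary, with covariance operator $\Sigma_{\ee}=\T_{\K_\star}\Sigma_\star\T_{\K_\star}^\ast$.

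Next I would collapse this covariance onto the optimal value. Applying the push-through identity $\T_{\K_\star}(\I-\gamma_\star^\inv\T_{\K_\star}^\ast\T_{\K_\star})^\inv=(\I-\gamma_\star^\inv\T_{\K_\star}\T_{\K_\star}^\ast)^\inv\T_{\K_\star}$ twice gives $\Sigma_{\ee}=\M_\star\,\T_{\K_\star}\T_{\K_\star}^\ast$, whence the normalized trace of $\Sigma_{\ee}$ equals $\Tr(\T_{\K_\star}\T_{\K_\star}^\ast\M_\star)$, the optimal value of \eqref{eq:infinite horizon drf convex}. Because the error is stationary, its lag-zero covariance matrix $R_0\defeq\E[e_te_t^\ast]$ is independent of $t$, and the Toeplitz-to-symbol (Szego) limit underlying the definition \eqref{eq:infinite horizon worst case MSE} identifies the time-averaged MSE with $\Tr(R_0)$; that is, $\Tr(R_0)=\overline{\wMSE}(\K_\star,\rho)=\Tr(\T_{\K_\star}\T_{\K_\star}^\ast\M_\star)$. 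Since $R_0\psdgeq 0$, a finite trace forces $R_0$ to be a finite positive-semidefinite matrix, which is exactly the assertion.

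It remains to certify that this common value is finite and that $\Sigma_\star,\M_\star$ are genuine covariance operators. Here I would invoke \cref{thm:dual formulation} directly: the max-min problem is feasible and attains the saddle point $(\K_\star,\M_\star)$ with a unique finite $\gamma_\star>0$ for which $\I-\gamma_\star^\inv\T_{\K_\star}\T_{\K_\star}^\ast\psdg 0$ and $\Tr\big(((\I-\gamma_\star^\inv\T_{\K_\star}\T_{\K_\star}^\ast)^\inv-\I)^2\big)=\rho^2$, so that existence of a finite saddle value already yields $\Tr(\T_{\K_\star}\T_{\K_\star}^\ast\M_\star)<\infty$. The hard part will be this finiteness certificate: one must ensure that $\M_\star=(\I-\gamma_\star^\inv\T_{\K_\star}\T_{\K_\star}^\ast)^{-2}$ has an integrable symbol, i.e., that the worst-case spectral density develops no non-integrable singularity on $\TT$. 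This is the only place where the magnitude of $\rho$ genuinely enters, and the finiteness of the constraint level $\rho^2$ is precisely what precludes a pole of $(\I-\gamma_\star^\inv\T_{\K_\star}\T_{\K_\star}^\ast)^\inv$ on the unit circle; once this integrability is secured, stationarity and the trace identity render the boundedness of the steady-state covariance immediate.
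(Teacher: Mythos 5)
Your proposal is correct and follows essentially the same route as the paper: the paper's own proof is a one-line observation that boundedness of the steady-state error covariance follows immediately from the finiteness of the time-averaged worst-case MSE, i.e., the finite saddle value of \eqref{eq:infinite horizon drf convex}. Your argument simply fills in the details the paper leaves implicit — stationarity of the worst-case error process, the push-through identity collapsing $\T_{\K_\star}\Sigma_\star\T_{\K_\star}^\ast$ onto $\M_\star\T_{\K_\star}\T_{\K_\star}^\ast$, and the identification of the lag-zero covariance trace with that saddle value.
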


%Once \eqref{eq:infinite horizon worst case MSE} has been turned into a convex program, the main technical challenge in obtaining the optimal filtering policy lies in the causality constraint on the estimator. To demonstrate the challenge imposed by causality, we present an analogues result on non-causal estimation for the infinite-horizon setting, as shown below:

%For each time horizon, there exists a unique non-causal estimator that achieves the minimum worst-case MSE, regardless of the radius of the  $\Was$-ambiguity set by \cref{lem:finite horizon non-causal}.

%We conclude this section with the following corollary stating the vanishing of the estimation error under the infinite-horizon $\Was$-DR-KF policy, which is not guaranteed for the finite-horizon case.

%In other words, \cref{thm:dual formulation} presents a way to obtain optimal DR filters indirectly by an optimization over positive definite operators rather than directly optimizing over causal operators, which is notoriously much challenging. 

%The Wiener-Hopf method, as stated in \cref{lem:wiener-hopf} gives a closed-form solution to approximation of non-causal operators by causal operators in the Frobenius-norm (or Hilbert-Schmidt norm for infinite-dimensional vector spaces) sense.

%In order to overcome this technical challenge, we appeal to two powerful tools: i) Fenchel duality to linearize the problem with respect to $\T_\K \T_\K^\ast$ as and ii) Wiener-Hopf method to solve the easier sub-problem (\cref{lem:wiener-hopf}). 

\section{An Efficient Algorithm} \label{sec:fixed_point}

%While the standard Kalman and $\Hinf$-filters allow for finite-order spate-space realizations derived via algebraic methods, we demonstrate in \cref{lem:non-rational} that the optimal $\K_\star$ lacks such a realization. This is because the corresponding transfer function, $K_\star(z)$, is non-rational despite admitting a non-linear finite-dimensional parametrization (\cref{lem:finite dim parameter}). Therefore, we adopt a novel approach to develop practically implementable distributionally robust filters. Our strategy is twofold.
%\vspace{-2mm}
While the standard Kalman and $\Hinf$-filters allow for finite-order spate-space realizations derived via algebraic methods, the optimal $\K_\star$ lacks such a realization since its transfer function $K_\star(z)$ is non-rational (\cref{cor:non-rational}) despite admitting a non-linear finite-dimensional parametrization (\cref{thm:fixed_point}). Thus, we adopt a novel twofold approach to develop practical DR filters: 
\setlist{leftmargin=15pt}
\begin{itemize}
\vspace{-1.5mm}
    \item[1.]  %We introduce an efficient algorithm to compute the optimal positive-definite operator $\M_\star$ from \eqref{eq:infinite horizon drf convex}. Since these operators are infinite-dimensional, rendering standard optimization tools inapplicable, we instead utilize the frequency-domain representation of $\M_\star$ as transfer function $M_\star(z)$.
    We introduce an efficient algorithm to compute the optimal positive-definite operator $\M_\star$ from \eqref{eq:infinite horizon drf convex}. To address the challanges posed by its infinite-dimensional nature, we use the frequency-domain representation of $\M_\star$ as the power spectral density $M_\star(z) \!\psdg\! 0$.
    %\vspace{-1.5mm}
    \item[2.] %We develop a novel method to approximate the positive non-rational transfer function $M_\star(z)$ in $\Hinf$-norm using positive rational functions through convex optimization. The resulting rational approximation of $M_\star(z)$ is then used to derive a rational filter via \eqref{eq:optimal K and M}, approximating the optimal $K_\star(z)$ and thus, enabling practical state-space realization.
    We develop a novel method to approximate the non-rational power spectral density $M_\star(z)$ in $\Hinf$-norm using positive rational functions through convex optimization. This rational approximation is then used to derive an approximate rational filter with state-space realization via \eqref{eq:optimal K and M}.
\end{itemize}
%\vspace{-2mm}
To this end, we adopt the transfer-function formalism for the rest of this paper with the correspondences: $\M \leftrightarrow M(z)$, $\U \leftrightarrow U(z)$, $\K\leftrightarrow K(z)$, and $\T_\K \leftrightarrow T_K (z)$ for $z\in\TT$. The following lemma characterizes the optimal $M_\star(z)$, implying finite-dimensional parametrization.

\begin{lemma}\label{thm:fixed_point}
Let $f:(\gamma, \Gamma) \mapsto \M $ return the unique solution of the implicit equation over $M(z)$,
\begin{equation}
    %\vspace{-2mm}
        M(z) =  \gamma^2 [  \gamma I \- U(z)^\inv {\Gamma} (I \- z\overline{A})^\inv \overline{B}\, \overline{B}^\ast ( I\-z\overline{A})^{\-\ast} {\Gamma}^\ast U(z)^{\-\ast} \+ T_{K_\circ}(z)T_{K_\circ}(z)^\ast ]^{\-2}, \forall z\inn\TT
    \end{equation}
where $U(z)^\ast U(z) \= M(z)$ is the unique spectral factorization and $(\overline{A},\overline{B},\overline{C})$ are obtained from state-space parameters (see \cref{app: frequency domain details} and \eqref{eq: modified A B C}). We have that $\M_\star \= f(\gamma_\star,\Gamma_\star)$ where
\begin{equation}\label{eq:finite parameter computation}
%\vspace{-1.5mm}
    \Gamma_\star = \frac{1}{2\pi}\int_{-\pi}^{\pi}  U_\star(\ejw) \overline{C} (I-\ejw \overline{A})^\inv d\omega,
\end{equation}
and $\gamma_\star\>0$ is such that $\Tr(\M_\star-2\sqrt{\M_\star}+\I)=\rho^2$,
\end{lemma}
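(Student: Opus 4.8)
The plan is to start from the frequency-domain form of the optimality condition \eqref{eq:optimal M from KKT} supplied by \cref{thm:dual formulation}, namely $M_\star(z) = (I - \gamma_\star^\inv T_{K_\star}(z)T_{K_\star}(z)^\ast)^{-2}$ for $z\in\TT$, and to show that the only genuinely infinite-dimensional dependence on the right-hand side can be condensed into the finite matrix $\Gamma_\star$. First I would decompose $\T_{\K_\star}\T_{\K_\star}^\ast$ by completion of squares: writing $\T_\K\T_\K^\ast = (\K-\K_\circ)(\I+\HH\HH^\ast)(\K-\K_\circ)^\ast + \T_{\K_\circ}\T_{\K_\circ}^\ast$ (which follows from $\K_\circ = \L\HH^\ast(\I+\HH\HH^\ast)^\inv$ and the matrix-inversion lemma) and inserting the optimal $\K_\star$ from \eqref{eq:optimal K from KKT}, the causal projection inherent in the Wiener--Hopf solution annihilates the cross terms and leaves the strictly anticausal residual
\[ \T_{\K_\star}\T_{\K_\star}^\ast = \U_\star^\inv \{\U_\star \Ss\}_{-}\{\U_\star \Ss\}_{-}^\ast \U_\star^{-\ast} + \T_{\K_\circ}\T_{\K_\circ}^\ast, \]
where $\Ss \defeq \{\K_\circ\Delta\}_{-}$ is strictly anticausal and $\U_\star^\ast\U_\star = \M_\star$ is the canonical factorization. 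This reduces everything to understanding the single anticausal operator $\{\U_\star\Ss\}_-$.

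The crux, and the step I expect to be the main obstacle, is to prove that $\{\U_\star\Ss\}_-$ is \emph{rational with exactly the anticausal poles of $\Ss$}, with only its left factor modified. Since $\Ss$ admits the finite realization $S(z) = \overline{C}(z^\inv I - \overline{A})^\inv\overline{B}$ with $\overline{A}$ stable (so that $S$ is bounded on $\TT$), I would expand $U_\star(z)=\sum_{m\geq 0}u_m z^{-m}$ (causal) and $S(z)=\sum_{n\geq 1}\overline{C}\,\overline{A}^{n-1}\overline{B}\,z^{n}$ (strictly anticausal) and read off the Fourier coefficients of the product. The coefficient of $z^{k}$ for $k\geq 1$ is $\big(\sum_{m\geq 0}u_m\overline{C}\,\overline{A}^m\big)\overline{A}^{k-1}\overline{B}$, so that $\{\U_\star\Ss\}_-(z) = \Gamma_\star(z^\inv I-\overline{A})^\inv\overline{B}$ with $\Gamma_\star \defeq \sum_{m\geq 0}u_m\overline{C}\,\overline{A}^m$. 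Recognizing this series as a geometric resolvent and using $u_m = \frac{1}{2\pi}\int_{-\pi}^{\pi}U_\star(\ejw)\e^{jm\omega}\,d\omega$ identifies $\Gamma_\star$ with the integral \eqref{eq:finite parameter computation}. The delicate points are justifying termwise convergence of the product expansion on $\TT$ (guaranteed by $\rho(\overline{A})<1$ and the boundedness of $U_\star,\,U_\star^\inv$) and confirming that the projection onto the strictly anticausal part kills precisely the nonnegative-power terms; I would phrase this cleanly through the action of the Hankel-type map $\Ss\mapsto\{\U_\star\Ss\}_-$ on the finite-dimensional shift-invariant subspace generated by $(\overline{A},\overline{B})$, which is invariant and thereby forces the asserted rational form.

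Finally I would substitute. On $\TT$ one has $(z^\inv I-\overline{A})^\inv = z(I-z\overline{A})^\inv$ and $z z^\ast=1$, so $\{\U_\star\Ss\}_-\{\U_\star\Ss\}_-^\ast = \Gamma_\star(I-z\overline{A})^\inv\overline{B}\,\overline{B}^\ast(I-z\overline{A})^{-\ast}\Gamma_\star^\ast$. Plugging the decomposition of $\T_{\K_\star}\T_{\K_\star}^\ast$ into the optimality condition and using $(I-\gamma^\inv X)^{-2}=\gamma^2(\gamma I - X)^{-2}$ yields precisely the implicit equation defining $f$ evaluated at $(\gamma_\star,\Gamma_\star)$, with the finite-dimensional content entering solely through $\Gamma_\star$. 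Hence $M_\star$ solves that implicit equation, and by its asserted uniqueness we conclude $\M_\star = f(\gamma_\star,\Gamma_\star)$. It remains only to note that $\gamma_\star>0$ is pinned down by the active constraint $\Tr(\M_\star-2\sqrt{\M_\star}+\I)=\rho^2$ inherited from \cref{thm:dual formulation}, and that since $U_\star$ is the spectral factor of $M_\star=f(\gamma_\star,\Gamma_\star)$ while $\Gamma_\star$ is in turn computed from $U_\star$, the pair $(\gamma_\star,\Gamma_\star)$ is self-consistent, i.e.\ a fixed point of the induced finite-dimensional map, which closes the characterization.
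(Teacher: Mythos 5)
Your proposal is correct and follows essentially the same route as the paper: it starts from the frequency-domain KKT conditions of \cref{thm:dual formulation}, uses the Wiener--Hopf decomposition $\T_{\K_\star}\T_{\K_\star}^\ast = \U_\star^\inv\{\U_\star\Ss\}_{-}\{\U_\star\Ss\}_{-}^\ast\U_\star^{-\ast}+\T_{\K_\circ}\T_{\K_\circ}^\ast$, and then proves via the same Fourier-coefficient/Parseval computation that $\{\U_\star\Ss\}_{-}(z)=\Gamma_\star(z^\inv I-\overline{A})^\inv\overline{B}$ with $\Gamma_\star$ given by the integral \eqref{eq:finite parameter computation}, before substituting back. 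Your added care about convergence of the series expansion (via $\rho(\overline{A})<1$ and boundedness of $U_\star$, $U_\star^\inv$) is a justification the paper leaves implicit, but the argument is the same.
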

As a consequence of \cref{thm:fixed_point}, we deduce the non-rationally of the optimal $\Was$-DR-KF.
\begin{corollary}\label{cor:non-rational}
    The spectral density $M_\star(z)$ and the transfer function $K_\star(z)$ are non-rational.
\end{corollary}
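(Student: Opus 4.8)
The plan is a proof by contradiction driven by the finite-dimensional fixed-point identity of \cref{thm:fixed_point}. Assume $M_\star(z)$ is rational. Since $M_\star(z)\psdg 0$ on $\TT$ and is para-Hermitian, classical rational spectral factorization makes the canonical factor $U_\star(z)$ and its inverse rational as well. I then substitute $U_\star$ into the fixed-point equation and take the positive-definite square root of $M_\star=\gamma_\star^2\Phi_\star^{-2}$, i.e. $\Phi_\star=\gamma_\star M_\star^{-1/2}$, where $\Phi_\star(z)=\gamma_\star I-U_\star(z)^{-1}V(z)V(z)^\ast U_\star(z)^{-\ast}+T_{K_\circ}(z)T_{K_\circ}(z)^\ast$ and $V(z)\defeq \Gamma_\star(I-z\overline A)^{-1}\overline B$ is a \emph{fixed} rational function once the finite-dimensional parameters $(\gamma_\star,\Gamma_\star)$ are frozen at the solution. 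In the representative scalar case ($\d_s=\d_y=1$) one has $\abs{U_\star(z)}^2=M_\star(z)$, so $U_\star$ cancels and the identity collapses to a genuine functional equation in $M_\star$ alone,
\begin{equation*}
\gamma_\star\sqrt{M_\star(z)}=M_\star(z)\,P(z)-\abs{V(z)}^2,\qquad P(z)\defeq \gamma_\star+\abs{T_{K_\circ}(z)}^2,
\end{equation*}
with $P$ and $\abs{V}^2$ fixed rational functions. Writing $m=\sqrt{M_\star}$, this is the quadratic $P\,m^2-\gamma_\star m-\abs{V}^2=0$, whose admissible root is $m=\bigl(\gamma_\star+\sqrt{D}\bigr)/(2P)$ with discriminant $D(z)\defeq\gamma_\star^2+4P(z)\abs{V(z)}^2$. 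Hence a rational $M_\star$ would force $\sqrt{D}$, and therefore $D$ itself, to be a perfect square of a rational function.

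The contradiction comes from the pole structure of $D$. The function $\abs{V(z)}^2=V(z)V(z)^\ast$ inherits a simple (order-one) pole at each reciprocal eigenvalue $z_0=1/\lambda$ of $\overline A$, since $(I-z\overline A)^{-1}$ contributes a first-order pole there while its para-conjugate remains analytic; as $P$ is analytic and nonvanishing at $z_0$ generically, $D$ acquires a pole of odd order at $z_0$. A rational perfect square cannot have a pole of odd order, so $D$ is not a perfect square, $\sqrt{D}$ is non-rational, and therefore $M_\star$ is non-rational. The only way this pole can disappear is if the residue vanishes, i.e. the correction term $V\equiv 0$; this is precisely the degenerate case of temporally uncorrelated disturbances in which the filter collapses to the nominal $\Htwo$ (Kalman) filter, and it is excluded by the controllability/detectability of the model together with a genuinely present correction term.

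Finally, I transfer the conclusion to $K_\star$. By \cref{thm:dual formulation}, $M_\star(z)=(I-\gamma_\star^{-1} T_{K_\star}(z)T_{K_\star}(z)^\ast)^{-2}$, so $\sqrt{M_\star(z)}=(I-\gamma_\star^{-1} T_{K_\star}(z)T_{K_\star}(z)^\ast)^{-1}$; were $K_\star(z)$ rational, the right-hand side would be rational and so would $M_\star$, contradicting the above. Hence $K_\star(z)$ is non-rational as well. I expect the main obstacle to be the matrix (multivariable) case, where $M_\star^{-1/2}$ is a true matrix square root and the scalar cancellation $\abs{U_\star}^2=M_\star$ is unavailable. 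There I would pass to the eigenvalues of the relevant para-Hermitian matrix function — algebraic functions of $z$ whose rationality is governed by the branch structure of the characteristic polynomial's discriminant — and argue that the same simple pole at $z_0=1/\lambda(\overline A)$ produces a genuine branch point, an odd-order singularity that cannot be resolved rationally, again unless the correction term degenerates. Making this branch-point and nondegeneracy bookkeeping precise in the matrix setting (including ruling out accidental cancellations between the poles of $P$ and $\abs{V}^2$) is the crux.
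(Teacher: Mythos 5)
Your reduction follows the same strategy as the paper's proof---turn the optimality/fixed-point equations into a scalar quadratic in $m=\sqrt{M_\star}$ and derive a contradiction from the square root of its discriminant---and in one respect it is cleaner: by multiplying through by $M_\star$ first, your discriminant $D=\gamma_\star^2+4P\abs{V}^2$ contains only frozen rational data, whereas the radical in the paper's proof still contains $U_\star$ itself. The preparatory steps are all sound: rational $M_\star$ implies rational $U_\star$; rationality of $m$ follows directly from $\gamma_\star m = P M_\star-\abs{V}^2$; hence $\sqrt{D}=2Pm-\gamma_\star$ would be rational, i.e. $D$ would be a perfect square; and the transfer to $K_\star$ via \cref{thm:dual formulation} is correct.

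The gap is the pole-parity argument. You assume $P(z)=\gamma_\star+\abs{T_{K_\circ}(z)}^2$ is analytic and nonvanishing at $z_0=1/\lambda(\overline{A})$ (the sign of the $T_{K_\circ}$ term is immaterial here). It is not: by \eqref{eq: modified A B C}, $\overline{A}=A_P^\ast$, so the poles of $V(z)=\Gamma_\star(I-z\overline{A})^{-1}\overline{B}$ sit exactly at the reciprocal conjugate eigenvalues of the closed-loop matrix $A_P$---and these are also poles of $T_{K_\circ}(z)T_{K_\circ}(z)^\ast = L(z)\bigl(I+H(z)^\ast H(z)\bigr)^{-1}L(z)^\ast$, since the zeros of $\det\bigl(I+H(z)H(z)^\ast\bigr)=\det\Delta(z)\det\Delta(z)^\ast$ are precisely $\lambda(A_P)$ and $1/\overline{\lambda(A_P)}$. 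For instance, for the scalar system $A=a$, $B=C_y=C_s=1$ one computes $T_{K_\circ}T_{K_\circ}^\ast=(2+a^2-az-az^{-1})^{-1}$, with simple poles at $A_P$ and $1/A_P$. Consequently $P$ and $\abs{V}^2$ each have a simple pole at $z_0$, the product $P\abs{V}^2$ has a pole of order \emph{two}, and $D$ has an even-order pole there---perfectly compatible with $D$ being a rational perfect square. The claimed contradiction therefore never materializes, and your fallback ("the pole can only disappear if $V\equiv 0$") is moot because it does not need to disappear. To salvage the argument you would need a different obstruction to $D$ being a perfect square, e.g. showing that its zeros (which lie off $\TT$, since $D\geq\gamma_\star^2>0$ on $\TT$) are generically simple, or analyzing the two leading Laurent coefficients at the double pole. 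In fairness, the paper's own proof is no more rigorous at this point (it simply asserts that taking a positive-definite square root "does not generally preserve rationality"), but your write-up rests on a specific analyticity claim that is false, not merely unproven.
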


\begin{comment}
    
\begin{algorithm}[ht]
   \caption{Frequency-domain iterative optimization method solving \cref{eq:infinite horizon drf convex}}
   \label{alg:fixed_point}
\begin{algorithmic}[1]
   \STATE {\bfseries Input:} Radius $\rho\>0$, state-space model $(A,B,C_y,C_s)$, discretization $N\>0$, tolerance $\epsilon\>0$ 
   \STATE Compute $(\overline{A},\overline{B},\overline{C})$ using \eqref{eq: modified A B C} and generate $\TT_N \defeq \{\e^{j 2\pi n /N} \mid n \!=\! 0,\dots,N\!-\!1\}$
   \STATE Initialize $M_{0}(z) \gets I$ for $z\in\TT_N$, and $k\gets 0$
   \REPEAT 
   \STATE Compute the spectral factor $U_k(z)$ and the parameter $\Gamma_k$ (see \cref{alg:spectral factor method})
   \STATE Compute the gradient $G_{k}(z)$ for $z\in\TT_N$ (see \cref{app: gradients in fw})
   \STATE Compute $\widetilde{M}_{k}(z)$ by \eqref{eq:fw linear subproblem} via bisection (see \cref{app:bisection})
    \STATE Set $M_{k+1}(z)\gets (1-\eta_k) M_k(z) + \eta_k \widetilde{M}_{k}(z) $ for $z\in\TT_N$.
   \STATE Increment $k\gets k+1$
   \UNTIL{$\norm{M_{k+1} - M_{k}}/\norm{M_{k}} \leq \epsilon $}
   \STATE Compute $K(z) \gets \texttt{RationalApproximate}(M_{k+1})$ (see \cref{app: rational approximation})
\end{algorithmic}
\end{algorithm}

\end{comment}

\subsection{Iterative Optimization Methods in the Frequency-Domain }
Despite being a concave program, the infinite-dimensional nature of \eqref{eq:infinite horizon drf convex} hinders the direct application of standard optimization tools. To address this, we leverage frequency-domain analysis via transfer functions, enabling the use of standard tools with appropriate modifications. Specifically, we employ the modification of a Frank-Wolfe method \cite{frank_algorithm_1956,jaggi_revisiting_2013}. Our framework is versatile and can be extended to alternative approaches, including projected gradient descent \cite{goldstein1964convex}, and the fixed-point iteration method used in \cite{kargin_wasserstein_2023}. A detailed pseudocode \cref{alg:fixed_point_detailed} is provided in \cref{app: detailed pseudocode }.

%A pseudocode is given in \cref{alg:fixed_point} with a detailed version in \cref{alg:fixed_point_detailed}.
%To solve \cref{eq:infinite horizon drf convex}, we propose using the Frank-Wolfe method \cite{frank_algorithm_1956}. More specifically, we define the function

\textbf{Frank-Wolfe:} We define the following function and its (Gateaux) gradient \cite{danskin}:
\begin{equation}\label{eq:wiener-hopf function}
    \Phi(\M) \triangleq \inf_{\K \in \causal} \Tr\pr{ \T_{\K} \T_{\K}^\ast \M }, \; \textrm{ and }\; \nabla \Phi(\M) \= \U^{-1} \cl{ \U \K_\circ \Delta }_{-}\cl{\U \K_\circ \Delta }_{-}^\ast \U^{-\ast} \+ \T_{\K_\circ}\T_{\K_\circ}^\ast .
\end{equation}
where $\U^\ast \U=\M$ is the spectral factorization. Rather than solving the optimization in \eqref{eq:infinite horizon drf convex} directly, the Frank-Wolfe method solves a linearized subproblem in consecutive steps. Namely, given the $k^\textrm{th}$ iterate $\M_k$, the next iterate $\M_{k+1}$ is obtained via
\begin{subequations} \label{eq: FW updates}
    \begin{equation}
     %\vspace{-2mm}
    \widetilde{\M}_{k} = \argmax_{\M \psdgeq I} \,\Tr\pr{\nabla \Phi(\M_k)\,   \M } \quad \mathrm{s.t.} \quad \Tr(\M - 2\sqrt{\M} + \I) \leq \rho^2,  \label{eq:fw linear subproblem}
    \end{equation}
    %\vspace{-2mm}
\begin{equation}
    %\vspace{-0mm}
    \quad\quad\M_{k+1} = (1-\eta_k) \M_k + \eta_k \widetilde{\M}_k,  \label{eq:fw linear combination}
\end{equation}
\end{subequations}

where $\eta_k\inn [0,1]$ is a step-size, commonly set as $\eta_k \= \frac{2}{k+2}$ \cite{jaggi_revisiting_2013}. Letting $\G_{k} \!\defeq \!\nabla \Phi(\M_k)$ be the gradient as in \eqref{eq:wiener-hopf function}, Frank-Wolfe updates can be expressed equivalently using spectral densities as:
\begin{equation}\label{eq:frank wolfe frequency}
    \widetilde{M}_{k}(z)\= (I \- \gamma_k^\inv G_k(z))^{-2} \quad \textrm{and}\quad M_{k+1}(z) \= (1\-\eta_k) M_k(z) \+ \eta_k \widetilde{M}_k(z), \quad \forall z\in\TT
\end{equation}
where $\gamma_k\>0$ solves $\Tr\br{((I\-\gamma_k^\inv \G_k)^{-1} \- I)^2} \= \rho^2$. See \cref{app: gradients in fw} for a closed-form $G_k(z)$.

\textbf{Discretization:} Instead of the continuous domain unit circle $\TT$, we  consider its uniform discretization by $N$ points, $\TT_N \!\defeq\! \{\e^{j 2\pi n /N} \mid n\=0,\!\dots\!,N\-1\}$. While the gradient update $G_k(z)$ for frequency $z$ is applied to the next iterate $M_{k\+1}(z)$ at that frequency, calculating $G_k(z)$ requires $M_k(z^\prime)$ at all other frequencies $z^\prime \! \in \! \TT$ due to spectral factorization involved. Thus, the full update for $M_{k\+1}(z)$ needs $M_k(z^\prime)$ across the entire unit circle. This is overcome by finer discretization.

\textbf{Spectral Factorization:} Since the iterates $M_k(z)$ are non-rational spectral densities, the spectral factorization can only be performed approximately \cite{sayed_survey_2001}. Specifically, we employ the algorithm proposed in \cite{rino_factorization_1970} that uses discrete Fourier transform (DFT) and is based on Kolmogorov's method of factorization \cite{kolmogorov1939interpolation}. This method, tailored for scalar spectral densities (\ie, for scalar target signals $\d_s\=1$), proves efficient as the associated error term, featuring a multiplicative phase factor, rapidly diminishes with finer discretization $N$. Matrix-valued spectral densities can also be tackled by various other algorithms \cite{wilson_factorization_1972, ephremidze_elementary_2010}. See \cref{app: spectral factorization} for a pseudocode and details.

\textbf{Bisection: } We use bisection method to find the $\gamma_k\>0$ that solves $\Tr\br{((I\-\gamma_k^\inv \G_k)^{-1} \- I)^2} \= \rho^2$ in the Frank-Wolfe update \eqref{eq:frank wolfe frequency}. See \cref{app:bisection} for a pseudocode and further details.
\begin{remark}
    The gradient $G_k(z)$ requires computation of the finite-dimensional parameter via \eqref{eq:finite parameter computation}, which can be performed using $N$-point trapezoidal integration. See \cref{app: gradients in fw} for details.
\end{remark}
We conclude this section with the following convergence result due to \cite{jaggi_revisiting_2013, lacoste-julien_affine_2014}.
\begin{theorem}[Convergence of $\M_k$]\label{thm: convergence of FW}
    There exists constants $\delta_N\>0$, depending on discretization $N$, and $\kappa\>0$, depending only on state-space parameters \eqref{eq: state space} and $\rho$, such that the iterates in \eqref{eq: FW updates} satisfy
    %\vspace{-2mm}
    \begin{equation}\label{eq:convergence rate}
    %\vspace{-2mm}
        \Phi(\M_\star) - \Phi(\M_k) \leq \frac{2\kappa}{k+2}(1+\delta_N).
       % \vspace{-2mm}
    \end{equation}
    %\vspace{-2mm}
\end{theorem}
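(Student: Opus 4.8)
The plan is to derive \eqref{eq:convergence rate} as an instance of the standard convergence guarantee for the Frank--Wolfe method equipped with an \emph{approximate} linear-minimization oracle, following \cite{jaggi_revisiting_2013} and its affine-invariant analysis \cite{lacoste-julien_affine_2014}. The multiplicative form $\frac{2\kappa}{k+2}(1+\delta_N)$ is precisely the inexact-oracle bound, so the work reduces to: (i) verifying the structural hypotheses in our operator setting, (ii) identifying $\kappa$ with the curvature constant of $\Phi$ over the feasible set, and (iii) identifying $\delta_N$ with the multiplicative accuracy of the discretized oracle.

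First I would record the structural prerequisites. The objective $\Phi$ in \eqref{eq:wiener-hopf function} is concave, being a pointwise infimum over $\K\in\causal$ of the linear functionals $\M\mapsto\Tr(\T_\K\T_\K^\ast\M)$. The feasible set $\mathcal{D}\defeq\{\M\psdgeq\I : \Tr(\M-2\sqrt{\M}+\I)\leq\rho^2\}$ is convex: its constraint function equals $\Tr((\sqrt{\M}-\I)^2)$, which is convex in $\M$ by operator concavity of the square root, while $\M\psdgeq\I$ is a convex cone constraint. The same identity shows $\mathcal{D}$ is bounded in the Hilbert--Schmidt ($\Htwo$) sense, since the $\rho^2$ budget caps the deviation of $\sqrt{\M}$ from $\I$. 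These are exactly the ingredients — concave objective, convex bounded domain — required by the Frank--Wolfe framework, and the linearized subproblem \eqref{eq:fw linear subproblem} admits the closed-form maximizer \eqref{eq:frank wolfe frequency}.

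Next I would bound the curvature constant
\[
\kappa \;=\; \sup\, \tfrac{2}{\eta^2}\big[\Phi(\M) + \langle \mathcal{Y}-\M,\, \nabla\Phi(\M)\rangle - \Phi(\mathcal{Y})\big],
\]
the supremum being over $\M,\mathcal{S}\in\mathcal{D}$, $\eta\in(0,1]$, and $\mathcal{Y}=\M+\eta(\mathcal{S}-\M)$. Finiteness of $\kappa$ reduces to uniform smoothness of $\Phi$ on $\mathcal{D}$, which I would extract from the explicit gradient in \eqref{eq:wiener-hopf function}: since $\M\psdgeq\I$ is bounded away from zero and bounded above on $\mathcal{D}$, the canonical spectral factor $\U$ and its inverse $\U^{-1}$ depend Lipschitz-continuously on $\M$, whence $\nabla\Phi$ is Lipschitz on $\mathcal{D}$. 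The resulting curvature bound then depends only on the state-space data $(A,B,C_y,C_s)$ through $\K_\circ$, $\T_{\K_\circ}$, $\Delta$ and on $\rho$ through the diameter of $\mathcal{D}$, as claimed.

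Finally I would account for the two sources of oracle inexactness: the subproblem \eqref{eq:fw linear subproblem} is solved over the $N$-point sampling $\TT_N$ rather than all of $\TT$, and each gradient evaluation uses an approximate spectral factorization (the Kolmogorov/Rino scheme \cite{rino_factorization_1970}) together with $N$-point trapezoidal quadrature for the finite-dimensional parameter in \eqref{eq:finite parameter computation}. I would bundle these into a single additive oracle error and show it is at most $\tfrac12\,\delta_N\,\eta_k\,\kappa$ for a $\delta_N$ depending only on $N$ and vanishing as $N\to\infty$ — the factorization error decays because its residual multiplicative phase factor shrinks with finer sampling, and the quadrature error decays with the smoothness of the integrands on $\TT$. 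With the oracle meeting this multiplicative tolerance, the inexact Frank--Wolfe theorem of \cite{jaggi_revisiting_2013} yields \eqref{eq:convergence rate} verbatim. \textbf{Main obstacle:} the crux is the uniform smoothness estimate underlying finiteness of $\kappa$ — controlling the spectral-factorization map $\M\mapsto\U$ (and the anticausal projection $\{\U\K_\circ\Delta\}_-$ entering $\nabla\Phi$) uniformly as $\M$ ranges over $\mathcal{D}$, including near the constraint boundary; a secondary difficulty is making the decay rate of $\delta_N$ quantitative from the factorization and quadrature error bounds.
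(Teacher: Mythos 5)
Your proposal follows essentially the same route as the paper: both reduce the claim to Theorem 1 of \cite{jaggi_revisiting_2013} for Frank--Wolfe with an inexact linear-minimization oracle, identifying $\kappa$ with the curvature constant of $\Phi$ over $\{\M \psdg 0 : \Tr(\M - 2\sqrt{\M} + \I) \leq \rho^2\}$ and $\delta_N$ with the oracle error induced by discretization, approximate spectral factorization, and quadrature. If anything, your version is more faithful to Jaggi's hypothesis — you require the oracle error to scale as $\tfrac{1}{2}\delta_N\,\eta_k\,\kappa$, whereas the paper states a plain additive error $\delta_N$ before citing the same theorem — and you additionally sketch the curvature-finiteness argument that the paper leaves implicit.
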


%\vspace{-3mm}
\subsection{Rational Approximation using \texorpdfstring{$\Hinf$}{H inf}-norm}\label{sec:RatApp}
%\vspace{-1mm}
In the preceding section, we introduced a method to compute the optimal $M_\star(z)$ approximately on the unit circle. However, the resulting filtering policy is non-rational and cannot be realized as a state-space filter. In this section, we introduce a novel technique for obtaining approximate rational filtering policies. Instead of directly approximating the filter itself, our method involves an initial step of \emph{approximating the power spectrum $M_\star(z)$} by a ratio of positive fixed order polynomials, $P(z)/Q(z)$, to minimize the $\Hinf$-norm of the approximation error. After finding a rational approximation $P(z)/Q(z)$ of $M_\star(z)$, we compute a state-space controller according to \cref{eq:optimal K and M}. For simplicity, we focus on scalar target signals, namely, $\d_s\=1$. 

Concretely, $P(z) \= \sum_{k=-m}^{m} p_k z^{-k}$ and $ Q(z) \= \sum_{k=-m}^{m} q_k z^{-k}$ are Laurent polynomials of degree $m\inn\N$ with symmetric coefficients $p_k \= p_{-k} \inn \R$ and $q_k \= q_{-k} \inn \R$. In other words, the polynomials $P(z)$ and $Q(z)$ are uniquely identified by $m+1$ real coefficients, $(p_0,\dots,p_m)$ and $(q_0,\dots,q_m)$. Given a positive spectral density $M(z)\>0$ for $z\inn \TT$, we seek positive polynomials $P(z), Q(z)\>0$ for $z\inn \TT$ of order at most $m\inn\N$ that minimize the $\Hinf$-norm of the rational approximation error, \ie,
\begin{equation}\label{eq:hinf_rational_opt}
    \min_{ \substack{p_0,\dots p_m \in \R,\\ q_0,\dots q_m \in \R,\\ \eps \geq 0 }} \; \eps \quad \subjto \quad \begin{aligned}
    &\textit{i)}\;  P(z), Q(z)>0 \textrm{ for all } z\in \TT,\\
    &\textit{ii)}\; q_0 = 1,\\
    &\textit{iii)}\; \max_{z\in\TT} \Abs{\frac{P(z)}{ Q(z)}- M(z) } \leq \eps, 
    \end{aligned}
\end{equation}
where $\eps\!\geq\!0$ denotes an \emph{upper bound on the approximation error}. The constraint $q_0\=1$ eliminates redundancy in the problem since the fraction $P(z)/ Q(z)$ is scale invariant. Unfortunately, the problem \eqref{eq:hinf_rational_opt} is not convex in all the variables. Instead, \cref{thm:convex feasibility} shows convexity for fixed $\eps\!\geq\!0$.
\begin{lemma}\label{thm:convex feasibility}
    For a fixed $\eps\!\geq\!0$, the constraints \textit{(i-iii)} define a jointly convex set for the coefficients. %$(p_0,\dots p_m)$ and $ (q_0,\dots q_m)$.
    %\vspace{-3mm}
\end{lemma}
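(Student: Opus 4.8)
The plan is to show each of the three constraints is convex in the decision variables $(p_0,\dots,p_m,q_0,\dots,q_m)$ for fixed $\eps\geq0$, and then invoke the fact that a finite intersection of convex sets is convex. The key observation throughout is that the coefficients enter \emph{linearly} into the Laurent polynomials $P(z)$ and $Q(z)$: for each fixed $z\in\TT$, the maps $(p_0,\dots,p_m)\mapsto P(z)$ and $(q_0,\dots,q_m)\mapsto Q(z)$ are linear (in fact affine-linear, real-valued since the coefficients are symmetric, so $P(z)=p_0+2\sum_{k=1}^m p_k\cos(k\omega)$ and similarly for $Q$, writing $z=\e^{j\omega}$). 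This linearity is the engine that makes everything convex.

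First I would handle constraint \textit{(ii)}, $q_0=1$, which is a single affine equality and hence trivially defines a convex (affine) set. Next, for constraint \textit{(i)}, the requirement $P(z)>0$ for all $z\in\TT$ is an infinite family of constraints indexed by $z$; but for each fixed $z$, $P(z)>0$ is a strict linear inequality in the $p_k$, so it defines an open half-space, which is convex. The set $\{(p_k): P(z)>0\ \forall z\in\TT\}$ is therefore an intersection (over all $z\in\TT$) of convex half-spaces, hence convex; the same argument applies verbatim to $Q(z)>0$ in the $q_k$.

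The crux is constraint \textit{(iii)}, $\max_{z\in\TT}\bigl|P(z)/Q(z)-M(z)\bigr|\leq\eps$. The ratio $P(z)/Q(z)$ is \emph{not} jointly convex in $(p,q)$, so I would not work with the inequality in its displayed form. The trick is to clear the denominator: \emph{on the feasible set where $Q(z)>0$}, the inequality $|P(z)/Q(z)-M(z)|\leq\eps$ is equivalent to the pair of inequalities
\begin{equation*}
    -\eps\, Q(z)\;\leq\; P(z)-M(z)Q(z)\;\leq\;\eps\, Q(z),\qquad \forall z\in\TT,
\end{equation*}
obtained by multiplying through by the positive quantity $Q(z)$. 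For each fixed $z$ and fixed $\eps$, both the left and right inequalities are \emph{linear} in the combined variables $(p,q)$, since $P(z)$, $Q(z)$, and $M(z)Q(z)$ are all linear in the coefficients (with $M(z)$ and $\eps$ being fixed constants at this point). Thus each defines a closed half-space, their intersection over all $z\in\TT$ is convex, and intersecting with the convex sets from \textit{(i)} and \textit{(ii)} yields the claimed joint convexity.

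The main obstacle to watch is the denominator-clearing step: the equivalence between the ratio form and the cleared form only holds where $Q(z)>0$, so I must be careful to state that the convex set in question is the intersection of \emph{(i)} (which already enforces $Q(z)>0$) with the cleared version of \emph{(iii)}. It is on this intersection that the two formulations coincide, so the set defined by \textit{(i--iii)} jointly equals the intersection of convex sets and is therefore convex. I would also remark that the infinite index set $z\in\TT$ poses no difficulty for convexity (an arbitrary intersection of convex sets is convex), though in the algorithmic implementation one discretizes $\TT$ into $\TT_N$, replacing each infinite family by finitely many linear constraints — a semi-infinite-to-finite reduction that preserves convexity and turns the feasibility problem into a genuine linear program in $(p,q)$ for fixed $\eps$.
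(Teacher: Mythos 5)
Your proof is correct and follows essentially the same route as the paper's: constraints \textit{(i--ii)} are affine in the coefficients, and constraint \textit{(iii)} is handled by clearing the denominator (valid since \textit{(i)} enforces $Q(z)>0$) to obtain the pair of jointly affine inequalities $P(z)-(M(z)\pm\eps)Q(z)\lessgtr 0$, exactly as in the paper. Your added care about where the denominator-clearing equivalence holds, and about intersections over the infinite index set $\TT$, is a welcome refinement but not a different argument.
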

\begin{proof}
    The constraints \textit{(i-ii)} are affine inequalities, hence convex. Constraint \textit{(iii)} is equivalent to
    %\vspace{-1.5mm}
    \begin{equation}\label{eq: convex Hinf constraint}
    %\vspace{-3mm}
        P(z) -( M(z)+\eps)Q(z) \leq 0, \; \textrm{ and }\; P(z)-( M(z)-\eps)Q(z) \geq 0,  \; \textrm{ for all } z\in \TT,
    \end{equation}
    which are jointly affine inequalities in $(p_0,\dots p_m)$ and $ (q_0,\dots q_m)$, hence convex.
    %\vspace{-1mm}
\end{proof}
This result enables us to obtain $m^{\textrm{th}}$-order rational approximations $P(z)/Q(z)$ of $M(z)$ with a fixed approximation precision $\eps $, signifying our tolerance for deviations from $M(z)$, by solving a \emph{convex feasibility problem}. Notice that the constraints \textit{(i)} and \textit{(iii)} (eqv. \eqref{eq: convex Hinf constraint}) involve inequalities over the entire unit circle $\TT$. Since the iterative method in \cref{alg:fixed_point_detailed} only returns the values of $M(z)$ on the discretized unit circle $\TT_N$, we can enforce these inequalities in the feasibility problem only for $\TT_N$. While being an inexact approximation for \textit{(iii)}, it is an exact characterization for \textit{(i)} as long as $N\>2m$ by the Nyquist-Shannon sampling theorem \cite{shannon1949communication}. See \cref{app: rational approximation} for a pseudocode.

Utilizing a convex feasibility oracle, our method can be used in two operational modes:
\setlist{leftmargin=15pt}
\begin{itemize}
\vspace{-1.5mm}
    \item[\textbf{1.}] \textbf{Fixed order, best precision:} By iteratively reducing the precision $\eps$ we can revise the $\eps$-feasible polynomials $P(z)$, $Q(z)$, effectively solving the non-convex problem \eqref{eq:hinf_rational_opt} to obtain the best $m^{\textrm{th}}$-order rational approximation.
    %\vspace{-1.5mm}
    \item[\textbf{2.}]  \textbf{Fixed precision, least order:} In contrast, we can seek the lowest degree rational approximation, which achieves a fixed precision $\eps$. 
\end{itemize}
%\vspace{-2mm}

\begin{theorem}\label{thm:state space filter}
The spectral factorization $U(z)^\ast U(z) \= P(z)/Q(z)$  of a degree $m$ rational approximation $P(z)/Q(z)$ admits a rational factor $U(z)$. Furthermore, the filter obtained from $U(z)$ using \eqref{eq:optimal K and M}, \ie,  $K(z) \= K_{\Htwo}(z) \+  U(z)^{-1}\cl{ U(z) \{K_\circ(z) \Delta(z)\}_{-} }_{+} \Delta(z)^\inv$ is rational and can be realized as a state-space filter as highlighted below: 
%\vspace{-2mm}
    \begin{equation}\label{eq: state space filter}
        \begin{aligned}
            %\vspace{-2mm}
            \zeta_{t+1} &=  \widetilde{F} \zeta_{t} + \widetilde{G} y_{t}, \\
            \widehat{s}_{t} &= \widetilde{H} \zeta_{t} + \widetilde{L} y_{t},
        \end{aligned}
    \end{equation}
    where $\zeta_t \!\in\! \R^{m\+ \d_x }$ is the filter state, and $(\widetilde{F},\widetilde{G},\widetilde{H},\widetilde{L})$ are determined from $(A,B,C_y,C_u)$ and $U(z)$.
\end{theorem}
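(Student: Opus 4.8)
The plan is to prove the three assertions in order: that the canonical spectral factor $U(z)$ of the rational approximant is itself rational, that the filter $K(z)$ built from it via \eqref{eq:optimal K and M} is rational, and that it admits a state-space realization of dimension exactly $m\+\d_x$.

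First I would factor $U$. Constraint \textit{(i)} guarantees $P(z),Q(z)\>0$ on $\TT$, and since both are symmetric Laurent polynomials of degree $m$, the Fej\'er--Riesz theorem writes each as the squared modulus of a degree-$m$ polynomial in $z^{-1}$; choosing the minimum-phase factors $P(z)\=\abs{\pi(z)}^2$ and $Q(z)\=\abs{\chi(z)}^2$ (all zeros strictly inside $\TT$), the ratio $U(z)\=\pi(z)/\chi(z)$ is rational of McMillan degree $m$, causal, with causal inverse, so $U^\ast U\=P/Q$ is the canonical factorization. This settles the first claim and supplies dimension-$m$ realizations of $U$ (poles at the zeros $\beta_j$ of $\chi$) and of $U^{-1}$ (poles at the zeros $\alpha_i$ of $\pi$), all inside $\TT$.

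Next I would establish rationality together with the cancellation underlying the degree count. Writing $S\defeq\{\K_\circ\Delta\}_-$, every ingredient of $K\=K_{\Htwo}\+U^{-1}\{US\}_+\Delta^{-1}$ is rational: $H,L$ are rational, hence so are $\Delta,\Delta^{-1}$ and $K_\circ$; the operator $S$ has the realization $S(z)\=\overline{C}(z^{-1}I-\overline{A})^{-1}\overline{B}$ of dimension $\d_x$; and the projections $\{\cdot\}_\pm$ preserve rationality since they merely split the poles according to their location relative to $\TT$. The key observation is that, by partial fractions, $V\defeq\{US\}_+$ inherits the poles of $U$ (the $\beta_j$), so $V\=\nu/\chi$ with $\deg\nu\leq m$; consequently the factor $\chi$ cancels in $U^{-1}V\=(\chi/\pi)(\nu/\chi)\=\nu/\pi$, a stable causal rational of degree $m$ with poles $\alpha_i$. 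This proves $K$ is rational and shows the correction term $\tfrac{\nu}{\pi}\Delta^{-1}$ has degree $m\+\d_x$.

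Finally I would assemble the realization from $K\=K_{\Htwo}\+\tfrac{\nu}{\pi}\Delta^{-1}$, exploiting that $\Delta^{-1}$ is precisely the innovations (whitening) filter already present in the steady-state Kalman filter. A single $\d_x$-dimensional Kalman state obeying $\hat x_{t+1|t}\=A_f\hat x_{t|t-1}\+K_f y_t$, with $A_f\=A\-K_f C_y$, simultaneously produces the nominal estimate $C_s\hat x_{t|t-1}$ and the innovations $e_t\=y_t\-C_y\hat x_{t|t-1}\=(\Delta^{-1}y)_t$. Feeding $e_t$ into a dimension-$m$ realization of the stable rational $\nu/\pi$ and summing its output with the nominal estimate yields the stacked state $\zeta_t\=[\hat x_{t|t-1};\,\xi_t]\inn\R^{m+\d_x}$ and the claimed form \eqref{eq: state space filter}, with $(\widetilde F,\widetilde G,\widetilde H,\widetilde L)$ built from $(A_f,K_f,C_y,C_s)$ and the coefficients of $U$; because $\Delta^{-1}$ is shared rather than realized twice, the dimension is exactly $m\+\d_x$. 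The hard part will be the explicit state-space computation of the causal part $\{US\}_+$, i.e. the Wiener--Hopf projection producing $\nu$ from $U$ and $(\overline{A},\overline{B},\overline{C})$ via a Sylvester/Lyapunov equation — this is where the parameter $\overline{C}_\star\=\tfrac{1}{2\pi}\int_{-\pi}^{\pi}U(\ejw)\overline{C}(I\-\ejw\overline{A})^{-1}\,d\omega$ of \cref{thm:fixed_point} enters — together with verifying that the pole cancellations leave no hidden modes beyond the shared $\Delta^{-1}$ dynamics, so that the minimal dimension is precisely $m\+\d_x$.
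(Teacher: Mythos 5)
Your proposal is correct and, at its core, follows the same route as the paper: the same decomposition $K \= K_{\Htwo} \+ U^{-1}\cl{U\{K_\circ\Delta\}_{-}}_{+}\Delta^{-1}$, the same key observation that the causal projection $\cl{US}_{+}$ carries only the poles of $U$ so that multiplication by $U^{-1}$ cancels them, and the same stacked realization (a $\d_x$-dimensional Kalman/innovations state plus an $m$-dimensional correction filter driven by the innovations, giving the block-triangular state matrix of dimension $m\+\d_x$). The differences are in execution, and they cut both ways. First, you actually prove the opening assertion---rationality of the spectral factor---via Fej\'er--Riesz applied to $P$ and $Q$ separately; the paper's proof never addresses this claim (it is implicit in its \texttt{RationalApproximation} algorithm, which likewise factors $P$ and $Q$ individually and sets $U \= S_P/S_Q$), so your treatment is more complete on this point. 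Second, where you establish the cancellation abstractly by partial-fraction pole bookkeeping ($\cl{US}_{+} \= \nu/\chi$, hence $U^{-1}\cl{US}_{+} \= \nu/\pi$), the paper establishes it constructively: it takes $U(z) \= \widetilde{D}^{1/2}(I \+ \widetilde{C}(zI\-\widetilde{A})^{-1}\widetilde{B})$, computes $\cl{U\{K_\circ\Delta\}_{-}}_{+}$ in closed form through the Lyapunov equation $\widetilde{A}U_{ly}A_P^\ast \+ \widetilde{B}C_sPA_P^\ast \= U_{ly}$, and carries out the state-space algebra to exhibit the cancellation and the explicit matrices $(\widetilde{F},\widetilde{G},\widetilde{H},\widetilde{L})$. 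That computation is precisely the ``hard part'' you defer at the end, so your sketch correctly locates the remaining work; what the paper's heavier algebra buys is the implementable realization itself, while your argument buys a shorter certification of rationality and of the dimension count. One caution: you aim to show the \emph{minimal} degree is exactly $m\+\d_x$; the theorem only asserts existence of a realization of that dimension, and exact minimality (a controllability/observability argument) is neither proved in the paper nor needed.
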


\section{{Numerical Experiments}} \label{sec:simul}

In this section, we compare the performance of finite and infinite horizon DR-KF filters with $ H_2$, $ H_\infty$ filters, and other DRKFs \cite{shafieezadeh_2018}, \cite{lotidis_wasserstein_2023}. Our evaluation includes both frequency domain and time-domain analyses, highlighting the effectiveness of the rational approximation method. The nominal distribution is assumed to be Gaussian with zero mean and identity covariance. Our results demonstrate that our DR-KF (in the finite and $\infty$ horizon) provides significant advantages over other DRKFs in terms of stability, computational speed, and error reduction. The experiments were performed on a M1 Macbook Air with 8 GB of RAM.

%Please note that when we refer to 'our DRKF,' we discuss the infinite %horizon case. If we refer to the finite horizon scenario, we will %explicitly mention it as 'our finite horizon DRKF.'

\subsection{Frequency Domain Evaluations}\label{subsec:sim_freq}
\vspace{-3mm}
We study a typical tracking problem whose state-space model is $A=\begin{bmatrix}
    1&\Delta t\\0&1
\end{bmatrix}$, $B=\begin{bmatrix}
    0&\Delta t
\end{bmatrix}^T$,$C_y=\begin{bmatrix}
    1&0
\end{bmatrix}$, $C_s=1$
where the state corresponds to the position and velocity, the process noise is the exogenous acceleration, and $\Delta t$ is the sampling time. %Another tracking problem is tested and the results are shown in the Appendix \ref{ap:sim}.
We plot the frequency response of our DR-KF using the metric $|T_K(e^{j\omega})|^2=\sigma (T_K^\ast(e^{j\omega}) T_K(e^{j\omega}))$, where $\sigma$ is the maximal singular value. We compare it to the classical $H_2$ (KF) and $H_\infty$ (robust) filters. Figure \ref{fig:fig_m_freq_11} shows that the DR-KF interpolates well between the $H_2$ (KF) and $H_\infty$ (robust) filters. Figure \ref{fig:ERR} illustrates the worst-case expected MSE. For smaller $r$, the DR-KF performs similarly to the $\mathcal{H}_2$ (KF) filter, while for larger $r$, its worst-case MSE approaches that of the robust filter. Overall, the DR-KF achieves the lowest worst-case expected MSE for any $r$. We investigate the behavior of the rational approximation across various values of the radius $r$. The results for degrees $m=1, 2, 3$ are given in Table \ref{table:worstregret} . Approximations of order greater than 2 achieve an expected MSE closely matching the non-rational DR-KF for all values of $r$.

\begin{figure}[htbp]
	\centering

	\begin{subfigure}[b]{0.45\textwidth}
		\centering
		 \includegraphics[width=0.8\textwidth]{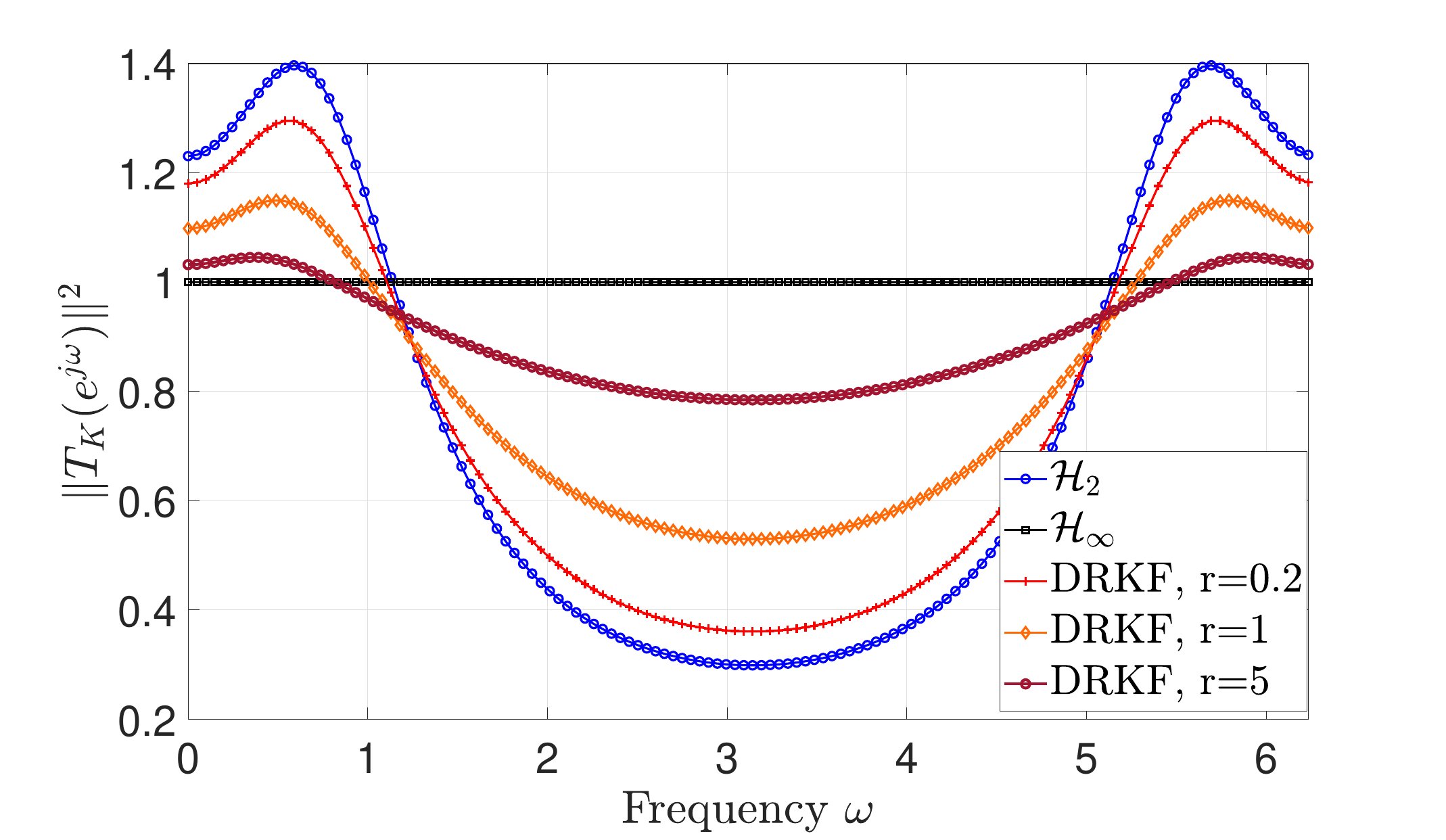}
       
		\caption{Frequency response for the tracking problem. }
		\label{fig:fig_m_freq_11}
	\end{subfigure}
	\hfill
	\begin{subfigure}[b]{0.45\textwidth}
		\centering
		\includegraphics[width=0.8\textwidth]{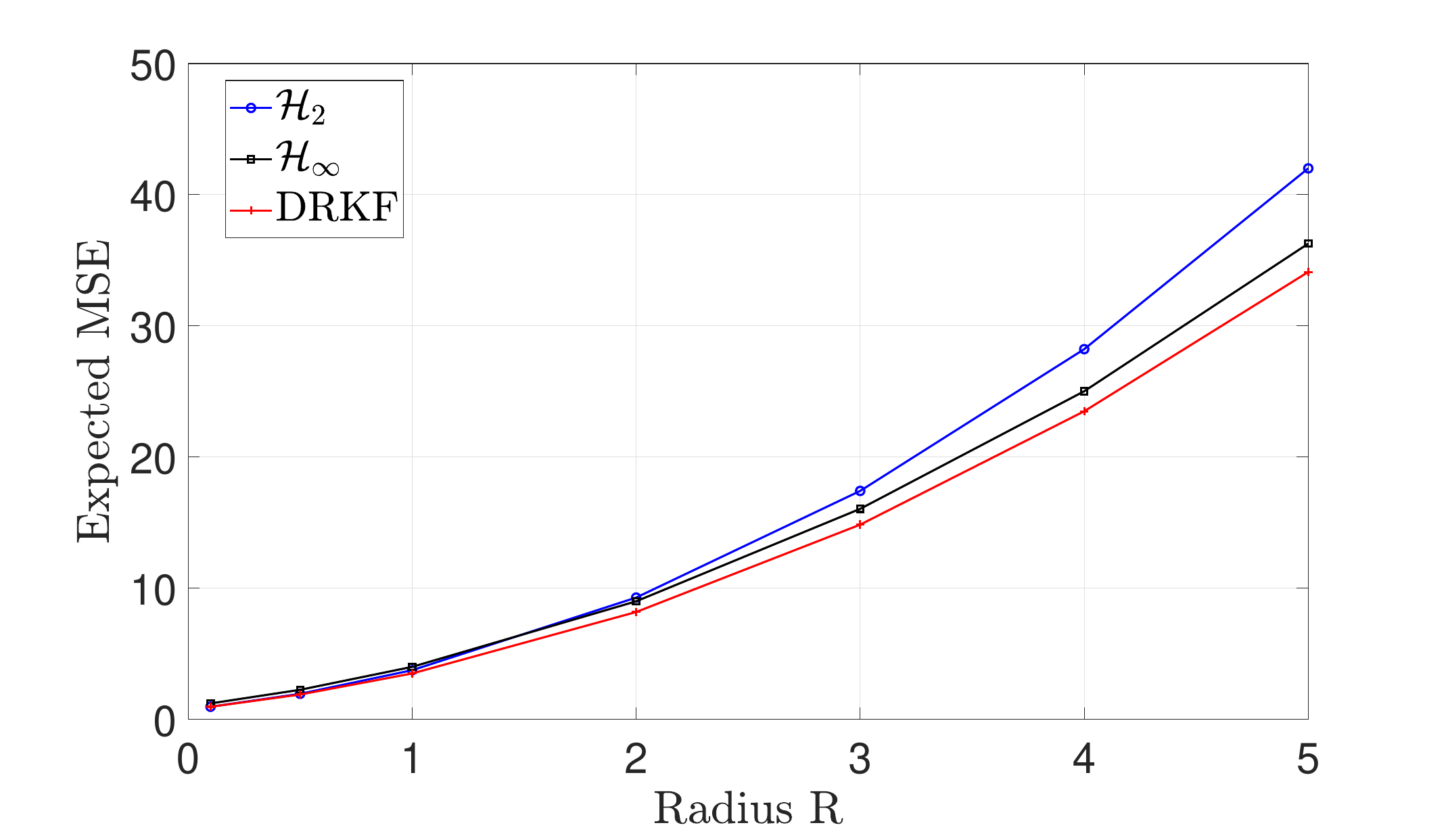}
       
		\caption{Worst-case expected MSE. }
    \label{fig:ERR}
	\end{subfigure}
	% \hfill
	% \begin{subfigure}[b]{0.33\textwidth}
	% 	\centering
	% 	 \includegraphics[width=\textwidth]{}
       
	% 	\caption{\tk{caption c}{}}
	% 	\label{fig:fig3}
	% \end{subfigure}
	% %\vspace{-0.2cm}
	\caption{DR-KF  versus the $\mathcal{H}_2, \mathcal{H}_{\infty}$ filters and the variation of the expected MSE with $r$.} 
 
	\label{fig: teaser}
\end{figure}

\begin{table*}[htbp]
    \centering
    \begin{minipage}{0.5\textwidth}
        \centering
        \tiny
        \setlength\tabcolsep{4pt} % Adjust the column spacing
        \begin{tabular}{|c||c|c|c|c|} 
            \hline
            \textbf{ } & \textbf{r=0.01} & \textbf{r=1} & \textbf{r=3} & \textbf{r=5}  \\
            \hline \hline
            $DRKF$ & 0.7870 &3.4948 & 14.842& 34.110  \\
            \hline
            \textbf{RA(1)} & 0.7871 & 3.5818& 15.954 & 38.327\\
            \hline
            \textbf{RA(2)} &0.7870    & 3.4948 &14.844 &34.124 \\
            \hline
            \textbf{RA(3)} & 0.7870   & 3.4948& 14.834 &34.024 \\
            \hline
        \end{tabular}
        \caption{The worst-case expected MSE of the non-rational DRKF, compared to the rational filters RA(1), RA(2), and RA(3), obtained from degree 1, 2, and 3 rational approximations to $U(e^{j\omega})$, for the system in \cref{subsec:sim_freq}.}
        \label{table:worstregret}
    \end{minipage}
    \hfill
    \begin{minipage}{0.4\textwidth}
        \centering
        \tiny
        \setlength\tabcolsep{4pt} % Adjust the column spacing
            \begin{tabular}{|c||c|c|c|c|} 
        \hline
        \textbf{ } & \textbf{T=10} & \textbf{T=50} & \textbf{T=100} & \textbf{T=1000}  \\
        \hline \hline
        \textbf{DRMC} & 32.9 s & NAN & NAN & NAN  \\
        \hline
        \textbf{Our DRKF (finite)} & 0.65 s & 7.3 s & 194.9 s & NAN\\
        \hline
        \textbf{Our DRKF (infinite)} & 6.6 s & 6.6 s & 6.6 s & 6.6 s \\
        \hline
    \end{tabular}
        \caption{The running time (in seconds) of different filters for the system in section \ref{sec:stanford}. The DRMC is inefficient for T> 10, our DRKF (finite) is inefficient for T> 50 while our $\infty$ horizon DRKF can run for any horizon. }
        \label{table:times}
    \end{minipage}
\end{table*}

\vspace{-3mm}
\subsection{Time Domain Evaluations}
\vspace{-2mm}
We assess the time-domain performance of both infinite and finite horizon DR-KF filters, comparing them with $H_2$ and $H_\infty$ counterparts on the tracking problem introduced in \cref{subsec:sim_freq}. The average MSE over 50 time steps, aggregated across 1000 independent trials, is plotted. In Figure \ref{fig:figa}, under white Gaussian noise, the $H_2$ (KF) filter outperforms others. Figures \ref{fig:figb} and \ref{fig:figc} correspond to correlated Gaussian noise and the worst-case noise for the finite horizon DRKF, respectively. In Figures \ref{fig:figb} and \ref{fig:figc}, the DRKF outperforms the classical filters, and the infinite horizon DRKF matches the finite horizon one. As we increase the time horizon, solving the finite horizon SDP becomes computationally infeasible, underscoring the advantage of the infinite horizon DRKF.
    
\begin{figure}[htbp]
    \centering
    %\vspace{-0.2cm}
        \begin{subfigure}[b]{0.33\textwidth}
        \centering
        \includegraphics[width=\textwidth]{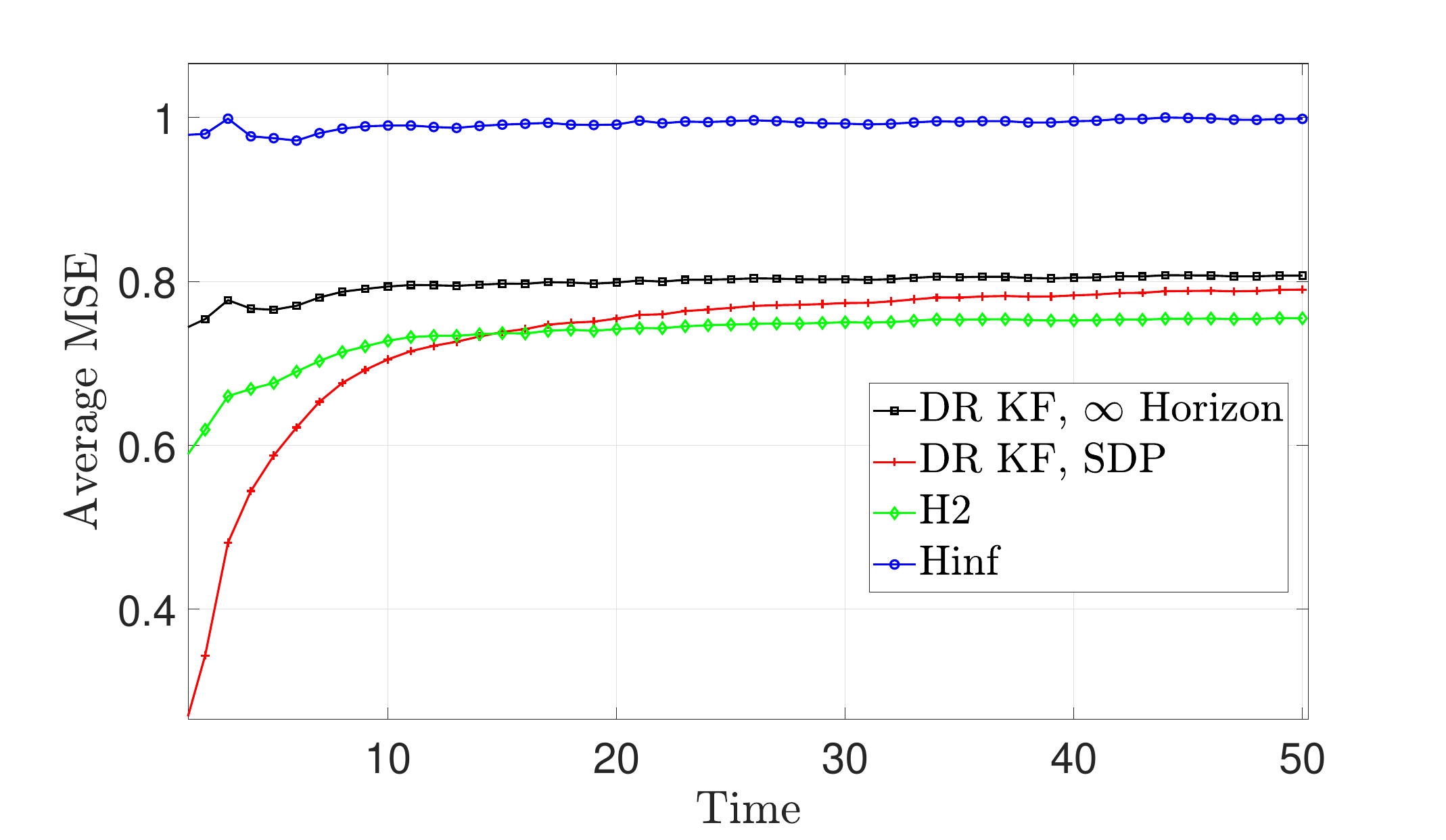}
        \caption{White noise}
        \label{fig:figa}
    \end{subfigure}
    \hfill
    \begin{subfigure}[b]{0.33\textwidth}
        \centering
        \includegraphics[width=\textwidth]{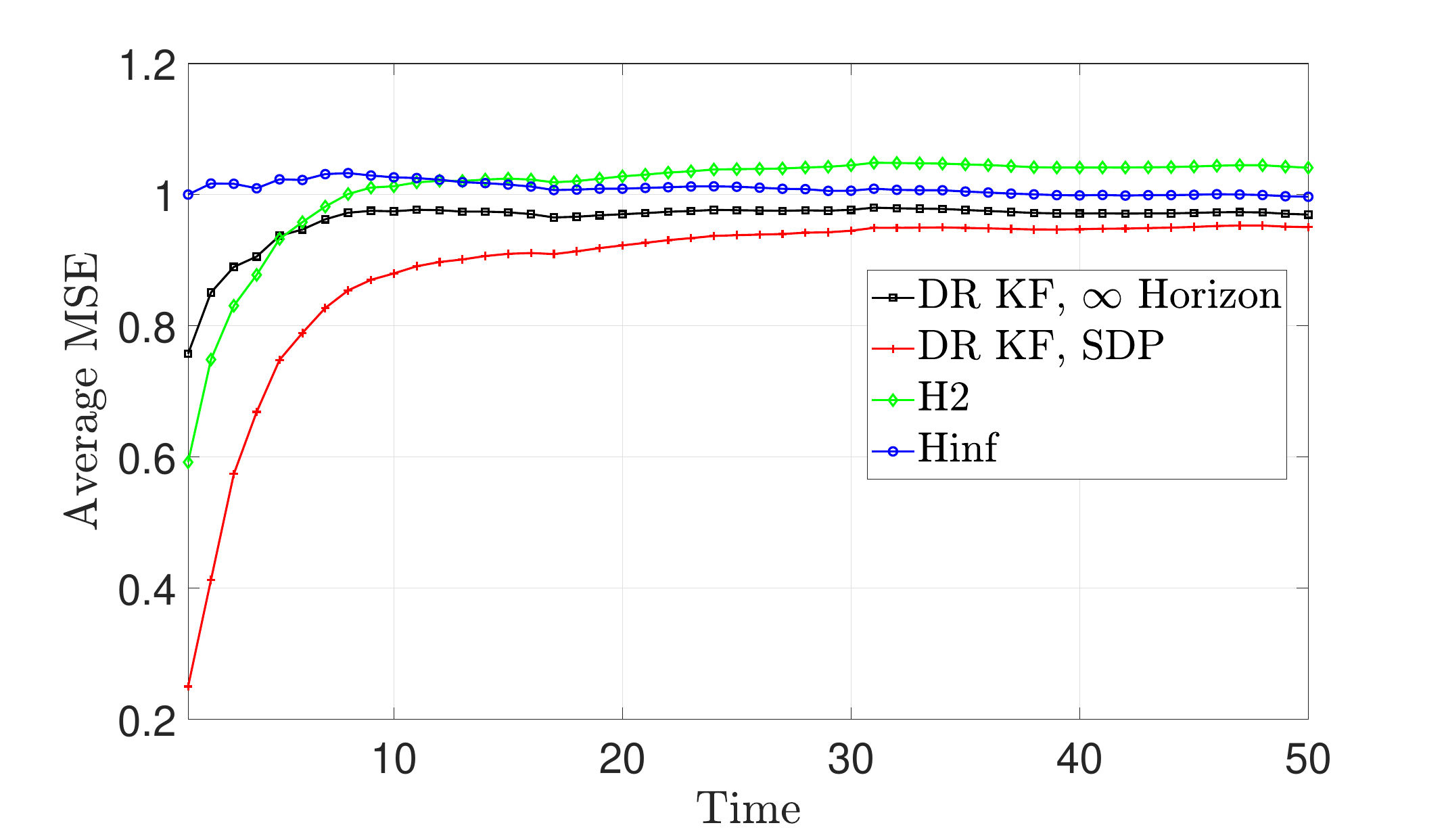}
        \caption{Correlated Gaussian noise}
        \label{fig:figb}
    \end{subfigure}
    \hfill
    \begin{subfigure}[b]{0.32\textwidth}
        \centering
        \includegraphics[width=\textwidth]{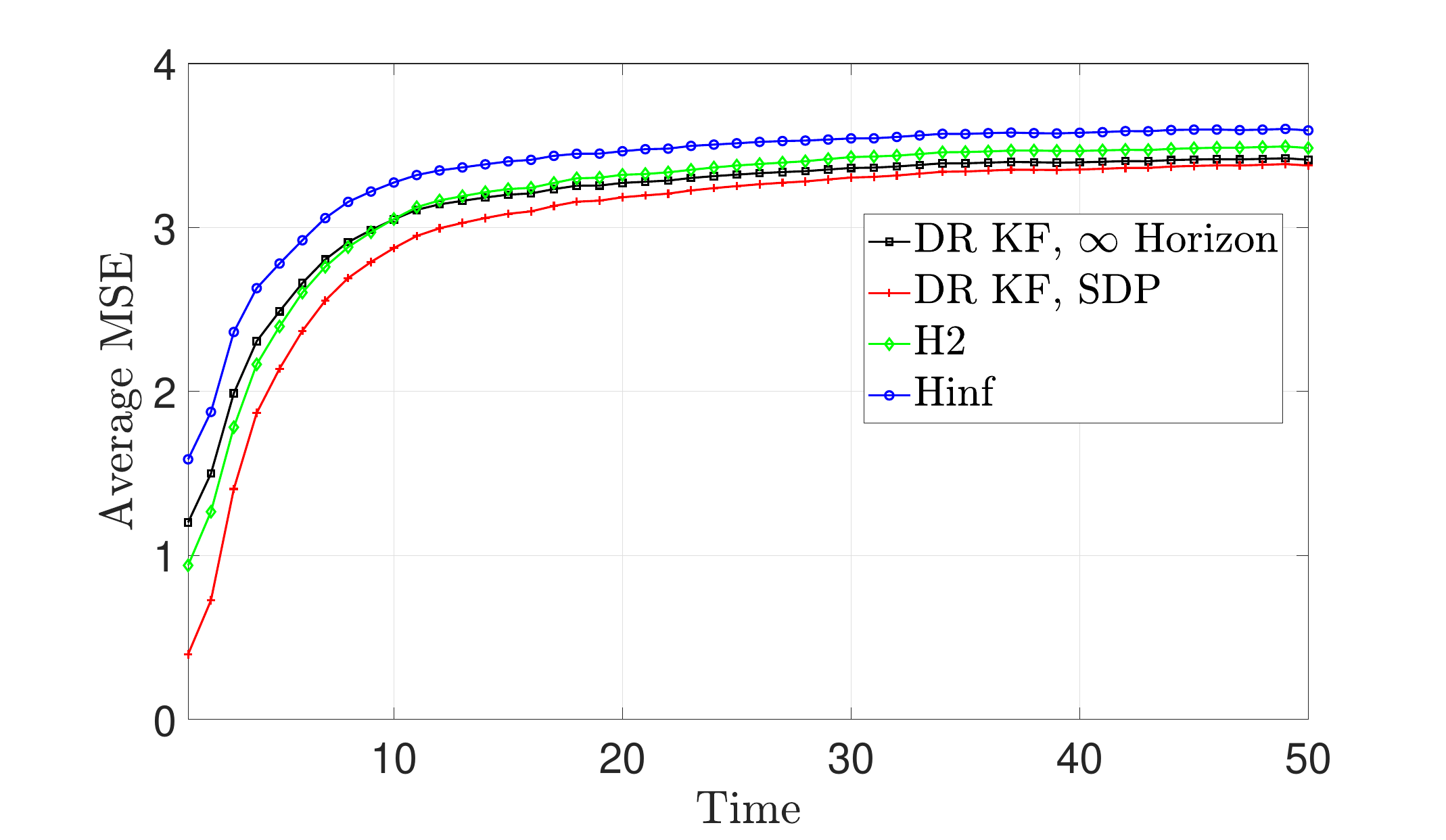}
        \caption{Worst noise for DR-KF, SDP}
        \label{fig:figc}
    \end{subfigure}

    %\vspace{-0.2cm}
    \caption{The average MSE of the different filters for the tracking problem, under (a) white noise, (b) correlated Gaussian noise, and (c) worst-case noise for the finite horizon DR KF for the system in \cref{subsec:sim_freq}. While the $H_2$ filter (KF) performs best in (a), it behaves poorly in (b), (c). The DRKF achieves the lowest error in (b) and (c), and the finite and infinite horizon achieve similar average MSE at the end of the horizon.}
    \label{fig:time_domain}
\end{figure}
%\subsection{Comparison to other DR estimators \cite{shafieezadeh_2018}, %\cite{lotidis_wasserstein_2023}}

\subsection{Comparison to the DRKF in \texorpdfstring{\cite{shafieezadeh_2018}}{ Shafieezadeh-Abadeh et al. 2018}}\label{sec:epfl}
\vspace{-2mm}
We first compare against \cite{shafieezadeh_2018} which assumes the states and measurements to be in 
a Wasserstein neighborhood around a nominal at each time step, robustifying immediately against model uncertainties. Authors in \cite{shafieezadeh_2018} don't consider noise correlations across time steps, their problem setup is in the finite-horizon, and they use the Frank-Wolfe algorithm to efficiently solve the problem.
The system matrices that they consider is given by $A = \begin{bmatrix}
0.9802 & 0.0196 + 0.099\Delta \\
0 & 0.9802
\end{bmatrix}, \quad 
Q=\begin{bmatrix}
    1.9608&0.0195\\0.0195&1.9605
\end{bmatrix},\quad 
B  = \begin{bmatrix}
\sqrt Q & 0_{2\times1} 
\end{bmatrix}, \quad
C_y = \begin{bmatrix}
1 & -1
\end{bmatrix}$, $C_s=I$, and $\Delta$ represents a scalar uncertainty (taken to be 1 as in \cite{shafieezadeh_2018}).
We compare the performance of our infinite-horizon DRKF to \cite{shafieezadeh_2018} in Figure \ref{fig: teaser2} under Gaussian noise. The plot shows that our DRKF outperforms \cite{shafieezadeh_2018} and has a more stable performance, even though we are disadvantaged in two ways: 1) our filter isn't explicitely designed for model uncertainties, 2) since we only consider estimations of linear combination of the state ($C_s$ is a row vector), we get the total MSE from 2 different runs with $C_s=\begin{bmatrix}
1 & 0
\end{bmatrix}$ and $C_s=\begin{bmatrix}
0 & 1
\end{bmatrix}$, which is suboptimal.
\begin{figure}[htbp]
	\centering

	\begin{subfigure}[b]{0.45\textwidth}
		\centering
		 \includegraphics[width=0.8\textwidth]{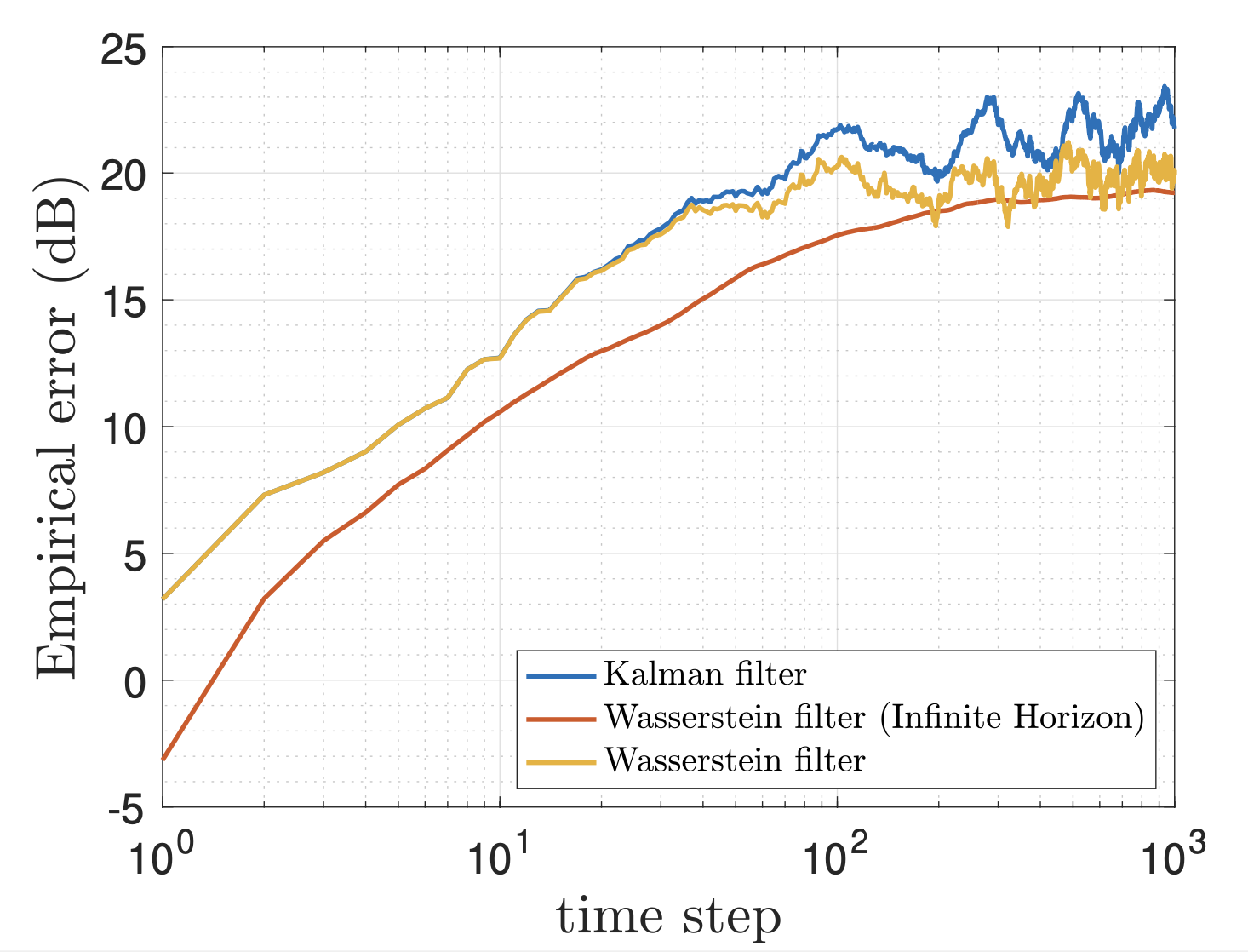}
       
		\caption{Average MSE in dB under Gaussian noise.}
		\label{fig:fig_epfl_1}
	\end{subfigure}
	\hfill
	% \begin{subfigure}[b]{0.45\textwidth}
	% 	\centering
	% 	\includegraphics[width=0.8\textwidth]{}
       
	% 	\caption{Average MSE in dB under correlated noise. }
 %    \label{fig:fig_epfl_2}
	% \end{subfigure}
	\caption{Average MSE for the KF, our DRKF, and the DRKF from \cite{shafieezadeh_2018}, for system in section \ref{sec:epfl}.} 
 
	\label{fig: teaser2}
\end{figure}
\subsection{Comparison to the DR estimator of \texorpdfstring{\cite{lotidis_wasserstein_2023}}{Lotidis et al. 2023}}\label{sec:stanford}
\vspace{-3mm}
We contrast our approach with that of \cite{lotidis_wasserstein_2023}, termed linear quadratic estimator under martingale constraints (DRMC). Here are the key comparisons: 1) DRMC, akin to our approach, considers noise within a Wasserstein neighborhood around a baseline, allowing for correlations between process and measurement noise (achieved through a martingale sequence constraint). 2) DRMC assumes the process noise is sampled from the baseline and doesn't lie in the Wasserstein ball, a more restrictive assumption compared to ours. 3) DRMC's problem formulation is in the finite-horizon, claiming to have an efficient converging method to solve it. With a horizon of $T=10$, they test their approach on a simple 1D system ($A=B=C_y=C_s=1$) , which we also use for comparison. For $r=0.2 \sqrt T$ and under the worst-case noise for our finite-horizon DRKF, the average MSE for DRMC is 0.86, closely matching our finite-horizon DRKF at 0.86 and our infinite-horizon DRKF at 0.88 at $T=10$.  
For the same $r=$ under the worst-case noise of DRMC, the average MSE for DRMC is 0.78, close to our DRKF (0.81 for the finite-horizon and 0.83 for the infinite-horizon at $T=10$). This shows that using the infinite-horizon controller for short horizons does not significantly compromise performance. Similar results are observed for other values of $r$.
While the performances in this simple example are comparable, our filter is anticipated to excel for higher-diemsnional systems, due to its explicit consideration of robustness over process noise. However, our DRKFs outshine DRMC in efficiency. DRMC takes 32.9 seconds for $T=10$, and becomes computationally infeasible beyond that. Our finite-horizon DRKF is faster and efficient up to $T=50$, and our infinite-horizon DRKF remains unaffected by the time horizon. For details, see Table \ref{table:times}.

%we showcase the superiority of both our finite and infinite-horizon controllers in terms of efficiency over DRMC. The results are shown in table \ref{table:times}. While it takes 32,9 seconds to run DRMC for only a horizon $T=10$, it is very inefficient to run it for any horizon larger than 10. Our finite-horizon DRKF is faster than DRMC, can run efficiently for horizons $T\leq50$, and  achieves comparable performance to DRMC (even better in different scenarios because our setup is more general). The table clearly shows the superiority of the infinite-horizon controller which doesn't scale with the time horizon.

\section{{Conclusion}} \label{sec:conc}
\tk{The main limitation in our work is that our $H_\infty$-rational approximation method is limited to scalar target signals (\ie, $C_s$ is a row vector). Future work will address this limitation.}
\newpage

%%%%%%%%%%%%%%%%%%%%%%%%%%%%%%%%%%%%%%%%%%%%%%%%%%%%%%   
%%%%%%%%%%%%%%%%%%%%% REFERENCES %%%%%%%%%%%%%%%%%%%%%
%%%%%%%%%%%%%%%%%%%%%%%%%%%%%%%%%%%%%%%%%%%%%%%%%%%%%%
\bibliography{refs}

\begin{thebibliography}{99}
\providecommand{\natexlab}[1]{#1}
\providecommand{\url}[1]{\texttt{#1}}
\expandafter\ifx\csname urlstyle\endcsname\relax
  \providecommand{\doi}[1]{doi: #1}\else
  \providecommand{\doi}{doi: \begingroup \urlstyle{rm}\Url}\fi

\bibitem[Kalman(1960)]{kalman_new_1960}
R.~E. Kalman.
\newblock A {New} {Approach} to {Linear} {Filtering} and {Prediction} {Problems}.
\newblock \emph{Journal of Basic Engineering}, 82\penalty0 (1):\penalty0 35--45, March 1960.
\newblock ISSN 0021-9223.
\newblock \doi{10.1115/1.3662552}.

\bibitem[Chan et~al.(1979)Chan, Hu, and Plant]{chan_kalman_1979}
Y.t. Chan, A.g.c. Hu, and J.B. Plant.
\newblock A {Kalman} {Filter} {Based} {Tracking} {Scheme} with {Input} {Estimation}.
\newblock \emph{IEEE Transactions on Aerospace and Electronic Systems}, AES-15\penalty0 (2):\penalty0 237--244, March 1979.
\newblock ISSN 0018-9251.
\newblock \doi{10.1109/TAES.1979.308710}.
\newblock URL \url{http://ieeexplore.ieee.org/document/4102140/}.

\bibitem[{Youngrock Yoon} et~al.(2008){Youngrock Yoon}, Kosaka, and Kak]{youngrock_yoon_new_2008}
{Youngrock Yoon}, A.~Kosaka, and A.C. Kak.
\newblock A {New} {Kalman}-{Filter}-{Based} {Framework} for {Fast} and {Accurate} {Visual} {Tracking} of {Rigid} {Objects}.
\newblock \emph{IEEE Transactions on Robotics}, 24\penalty0 (5):\penalty0 1238--1251, October 2008.
\newblock ISSN 1552-3098, 1941-0468.
\newblock \doi{10.1109/TRO.2008.2003281}.
\newblock URL \url{http://ieeexplore.ieee.org/document/4631506/}.

\bibitem[Grewal et~al.(1990)Grewal, Henderson, and Miyasako]{grewal_application_1990}
M.S. Grewal, V.D. Henderson, and R.S. Miyasako.
\newblock Application of {Kalman} filtering to the calibration and alignment of inertial navigation systems.
\newblock In \emph{29th {IEEE} {Conference} on {Decision} and {Control}}, pages 3325--3334 vol.6, Honolulu, HI, USA, 1990. IEEE.
\newblock \doi{10.1109/CDC.1990.203410}.
\newblock URL \url{http://ieeexplore.ieee.org/document/203410/}.

\bibitem[{Honghui Qi} and Moore(2002)]{honghui_qi_direct_2002}
{Honghui Qi} and J.B. Moore.
\newblock Direct {Kalman} filtering approach for {GPS}/{INS} integration.
\newblock \emph{IEEE Transactions on Aerospace and Electronic Systems}, 38\penalty0 (2):\penalty0 687--693, April 2002.
\newblock ISSN 0018-9251.
\newblock \doi{10.1109/TAES.2002.1008998}.
\newblock URL \url{http://ieeexplore.ieee.org/document/1008998/}.

\bibitem[Rodriguez(1987)]{rodriguez_kalman_1987}
G.~Rodriguez.
\newblock Kalman filtering, smoothing, and recursive robot arm forward and inverse dynamics.
\newblock \emph{IEEE Journal on Robotics and Automation}, 3\penalty0 (6):\penalty0 624--639, December 1987.
\newblock ISSN 0882-4967.
\newblock \doi{10.1109/JRA.1987.1087147}.
\newblock URL \url{http://ieeexplore.ieee.org/document/1087147/}.

\bibitem[Janabi-Sharifi and Marey(2010)]{janabi-sharifi_kalman-filter-based_2010}
Farrokh Janabi-Sharifi and Mohammed Marey.
\newblock A {Kalman}-{Filter}-{Based} {Method} for {Pose} {Estimation} in {Visual} {Servoing}.
\newblock \emph{IEEE Transactions on Robotics}, 26\penalty0 (5):\penalty0 939--947, October 2010.
\newblock ISSN 1552-3098, 1941-0468.
\newblock \doi{10.1109/TRO.2010.2061290}.
\newblock URL \url{http://ieeexplore.ieee.org/document/5560877/}.

\bibitem[Chen(2012)]{chen_kalman_2012}
S.~Y. Chen.
\newblock Kalman {Filter} for {Robot} {Vision}: {A} {Survey}.
\newblock \emph{IEEE Transactions on Industrial Electronics}, 59\penalty0 (11):\penalty0 4409--4420, November 2012.
\newblock ISSN 0278-0046, 1557-9948.
\newblock \doi{10.1109/TIE.2011.2162714}.
\newblock URL \url{http://ieeexplore.ieee.org/document/5985520/}.

\bibitem[Shoval et~al.(1997)Shoval, Zeitoun, and Lenz]{shoval_implementation_1997}
S.~Shoval, I.~Zeitoun, and E.~Lenz.
\newblock Implementation of a {Kalman} filter in positioning for autonomous vehicles, and its sensitivity to the process parameters.
\newblock \emph{The International Journal of Advanced Manufacturing Technology}, 13\penalty0 (10):\penalty0 738--746, October 1997.
\newblock ISSN 0268-3768, 1433-3015.
\newblock \doi{10.1007/BF01179074}.
\newblock URL \url{http://link.springer.com/10.1007/BF01179074}.

\bibitem[Farag(2021)]{farag_kalman-filter-based_2021}
Wael Farag.
\newblock Kalman-filter-based sensor fusion applied to road-objects detection and tracking for autonomous vehicles.
\newblock \emph{Proceedings of the Institution of Mechanical Engineers, Part I: Journal of Systems and Control Engineering}, 235\penalty0 (7):\penalty0 1125--1138, August 2021.
\newblock ISSN 0959-6518, 2041-3041.
\newblock \doi{10.1177/0959651820975523}.
\newblock URL \url{http://journals.sagepub.com/doi/10.1177/0959651820975523}.

\bibitem[{Arthur E. Bryson}(1978)]{arthur_e_bryson_kalman_1978}
{Arthur E. Bryson}.
\newblock Kalman {Filter} {Divergence} and {Aircraft} {Motion} {Estimators}.
\newblock \emph{Journal of Guidance and Control}, 1\penalty0 (1):\penalty0 71--79, January 1978.
\newblock ISSN 0162-3192.
\newblock \doi{10.2514/3.55745}.
\newblock URL \url{https://arc.aiaa.org/doi/10.2514/3.55745}.

\bibitem[Lefferts et~al.(1982)Lefferts, Markley, and Shuster]{lefferts_kalman_1982}
E.J. Lefferts, F.L. Markley, and M.D. Shuster.
\newblock Kalman {Filtering} for {Spacecraft} {Attitude} {Estimation}.
\newblock \emph{Journal of Guidance, Control, and Dynamics}, 5\penalty0 (5):\penalty0 417--429, September 1982.
\newblock ISSN 0731-5090, 1533-3884.
\newblock \doi{10.2514/3.56190}.
\newblock URL \url{https://arc.aiaa.org/doi/10.2514/3.56190}.

\bibitem[Pittelkau(2001)]{pittelkau_kalman_2001}
Mark~E. Pittelkau.
\newblock Kalman {Filtering} for {Spacecraft} {System} {Alignment} {Calibration}.
\newblock \emph{Journal of Guidance, Control, and Dynamics}, 24\penalty0 (6):\penalty0 1187--1195, November 2001.
\newblock ISSN 0731-5090, 1533-3884.
\newblock \doi{10.2514/2.4834}.
\newblock URL \url{https://arc.aiaa.org/doi/10.2514/2.4834}.

\bibitem[Baziw and Weir-Jones(2002)]{trifu_application_2002}
Erick Baziw and Iain Weir-Jones.
\newblock Application of {Kalman} {Filtering} {Techniques} for {Microseismic} {Event} {Detection}.
\newblock In Cezar~I. Trifu, editor, \emph{The {Mechanism} of {Induced} {Seismicity}}, pages 449--471. Birkhäuser Basel, Basel, 2002.
\newblock ISBN 978-3-7643-6653-7 978-3-0348-8179-1.
\newblock \doi{10.1007/978-3-0348-8179-1_19}.
\newblock URL \url{http://link.springer.com/10.1007/978-3-0348-8179-1_19}.

\bibitem[Hargreaves et~al.(2004)Hargreaves, Annan, Edwards, and Marsh]{hargreaves_efficient_2004}
J.~C. Hargreaves, J.~D. Annan, N.~R. Edwards, and R.~Marsh.
\newblock An efficient climate forecasting method using an intermediate complexity {Earth} {System} {Model} and the ensemble {Kalman} filter.
\newblock \emph{Climate Dynamics}, 23\penalty0 (7-8):\penalty0 745--760, December 2004.
\newblock ISSN 0930-7575, 1432-0894.
\newblock \doi{10.1007/s00382-004-0471-4}.
\newblock URL \url{http://link.springer.com/10.1007/s00382-004-0471-4}.

\bibitem[Galanis et~al.(2006)Galanis, Louka, Katsafados, Pytharoulis, and Kallos]{galanis_applications_2006}
G.~Galanis, P.~Louka, P.~Katsafados, I.~Pytharoulis, and G.~Kallos.
\newblock Applications of {Kalman} filters based on non-linear functions to numerical weather predictions.
\newblock \emph{Annales Geophysicae}, 24\penalty0 (10):\penalty0 2451--2460, October 2006.
\newblock ISSN 1432-0576.
\newblock \doi{10.5194/angeo-24-2451-2006}.
\newblock URL \url{https://angeo.copernicus.org/articles/24/2451/2006/}.

\bibitem[Tuan~Pham et~al.(1998)Tuan~Pham, Verron, and Christine~Roubaud]{tuan_pham_singular_1998}
Dinh Tuan~Pham, Jacques Verron, and Marie Christine~Roubaud.
\newblock A singular evolutive extended {Kalman} filter for data assimilation in oceanography.
\newblock \emph{Journal of Marine Systems}, 16\penalty0 (3-4):\penalty0 323--340, October 1998.
\newblock ISSN 09247963.
\newblock \doi{10.1016/S0924-7963(97)00109-7}.
\newblock URL \url{https://linkinghub.elsevier.com/retrieve/pii/S0924796397001097}.

\bibitem[Galka et~al.(2004)Galka, Yamashita, Ozaki, Biscay, and Valdés-Sosa]{galka_solution_2004}
Andreas Galka, Okito Yamashita, Tohru Ozaki, Rolando Biscay, and Pedro Valdés-Sosa.
\newblock A solution to the dynamical inverse problem of {EEG} generation using spatiotemporal {Kalman} filtering.
\newblock \emph{NeuroImage}, 23\penalty0 (2):\penalty0 435--453, October 2004.
\newblock ISSN 10538119.
\newblock \doi{10.1016/j.neuroimage.2004.02.022}.
\newblock URL \url{https://linkinghub.elsevier.com/retrieve/pii/S1053811904001144}.

\bibitem[Poupon et~al.(2008)Poupon, Roche, Dubois, Mangin, and Poupon]{poupon_real-time_2008}
Cyril Poupon, Alexis Roche, Jessica Dubois, Jean-François Mangin, and Fabrice Poupon.
\newblock Real-time {MR} diffusion tensor and {Q}-ball imaging using {Kalman} filtering.
\newblock \emph{Medical Image Analysis}, 12\penalty0 (5):\penalty0 527--534, October 2008.
\newblock ISSN 13618415.
\newblock \doi{10.1016/j.media.2008.06.004}.
\newblock URL \url{https://linkinghub.elsevier.com/retrieve/pii/S1361841508000571}.

\bibitem[Smith et~al.(2019)Smith, Rahni, Jones, and Wells]{smith_kalman-based_2019}
Rhodri~L. Smith, Ashrani Aizzudin~Abd Rahni, John Jones, and Kevin Wells.
\newblock A {Kalman}-{Based} {Approach} {With} {EM} {Optimization} for {Respiratory} {Motion} {Modeling} in {Medical} {Imaging}.
\newblock \emph{IEEE Transactions on Radiation and Plasma Medical Sciences}, 3\penalty0 (4):\penalty0 410--420, July 2019.
\newblock ISSN 2469-7311, 2469-7303.
\newblock \doi{10.1109/TRPMS.2018.2879441}.
\newblock URL \url{https://ieeexplore.ieee.org/document/8520867/}.

\bibitem[Watson(1980)]{watson_applications_1980}
Mark~Wayne Watson.
\newblock \emph{Applications of {Kalman} {Filter} {Models} in {Econometrics}}.
\newblock PhD thesis, University of California, San Deigo, San Deigo, 1980.
\newblock URL \url{https://www.proquest.com/openview/9e5eef36e1edebca2bf3225edf9538f4/1?pq-origsite=gscholar&cbl=18750&diss=y}.

\bibitem[Schneider(1988)]{schneider_analytical_1988}
Wolfgang Schneider.
\newblock Analytical uses of {Kalman} filtering in econometrics — {A} survey.
\newblock \emph{Statistical Papers}, 29\penalty0 (1):\penalty0 3--33, December 1988.
\newblock ISSN 0932-5026, 1613-9798.
\newblock \doi{10.1007/BF02924508}.
\newblock URL \url{http://link.springer.com/10.1007/BF02924508}.

\bibitem[Kellerhals(2001)]{kellerhals_financial_2001}
B.~Philipp Kellerhals.
\newblock \emph{Financial {Pricing} {Models} in {Continuous} {Time} and {Kalman} {Filtering}}, volume 506 of \emph{Lecture {Notes} in {Economics} and {Mathematical} {Systems}}.
\newblock Springer Berlin Heidelberg, Berlin, Heidelberg, 2001.
\newblock ISBN 978-3-540-42364-5 978-3-662-21901-0.
\newblock \doi{10.1007/978-3-662-21901-0}.
\newblock URL \url{http://link.springer.com/10.1007/978-3-662-21901-0}.

\bibitem[Sorenson and {Institute of Electrical and Electronics Engineers}(1985)]{sorenson_kalman_1985}
Harold~W. Sorenson and {Institute of Electrical and Electronics Engineers}, editors.
\newblock \emph{Kalman filtering, theory and application}.
\newblock {IEEE} {Press} selected reprint series. IEEE Press, New York, 1985.
\newblock ISBN 978-0-7803-0421-5 978-0-87942-191-5.

\bibitem[Grewal and Andrews(1993)]{grewal_kalman_1993}
Mohinder~S. Grewal and Angus~P. Andrews.
\newblock \emph{Kalman filtering: theory and practice}.
\newblock Prentice-{Hall} information and system sciences series. Prentice-Hall, Englewood Cliffs, N.J, 1993.
\newblock ISBN 978-0-13-211335-9.

\bibitem[Gelb and {Analytic Sciences Corporation}(2006)]{gelb_applied_2006}
Arthur Gelb and {Analytic Sciences Corporation}, editors.
\newblock \emph{Applied optimal estimation}.
\newblock M.I.T. Press, Cambridge, Mass., 19. printing edition, 2006.
\newblock ISBN 978-0-262-57048-0 978-0-262-20027-1.

\bibitem[Grimble(1987)]{grimble_h__1987}
Michael~J. Grimble.
\newblock { $\textit{H}_{\infty}$ } {Design} of {Optimal} {Linear} {Filters}.
\newblock \emph{Linear Circuit Systems and Signal Processing : Theory and Application (Proc. MTNS '87)}, pages 533--540, 1987.
\newblock URL \url{https://cir.nii.ac.jp/crid/1573105974308524416}.

\bibitem[Nagpal and Khargonekar(1991)]{nagpal_filtering_1991}
K.M. Nagpal and P.P. Khargonekar.
\newblock Filtering and smoothing in an {H}/sup infinity / setting.
\newblock \emph{IEEE Transactions on Automatic Control}, 36\penalty0 (2):\penalty0 152--166, February 1991.
\newblock ISSN 00189286.
\newblock \doi{10.1109/9.67291}.
\newblock URL \url{http://ieeexplore.ieee.org/document/67291/}.

\bibitem[Xie et~al.(1991)Xie, De~Souza, and Fu]{xie_h_1991}
Lihua Xie, Carlos~E. De~Souza, and Minyue Fu.
\newblock $\textit{{H}} _{\infty}$ estimation for discrete‐time linear uncertain systems.
\newblock \emph{International Journal of Robust and Nonlinear Control}, 1\penalty0 (2):\penalty0 111--123, April 1991.
\newblock ISSN 1049-8923, 1099-1239.
\newblock \doi{10.1002/rnc.4590010206}.
\newblock URL \url{https://onlinelibrary.wiley.com/doi/10.1002/rnc.4590010206}.

\bibitem[Fu et~al.(1992)Fu, De~Souza, and Xie]{fu_h_1992}
Minyue Fu, Carlos~E. De~Souza, and Lihua Xie.
\newblock $\textit{{H}}_{\infty}$ estimation for uncertain systems.
\newblock \emph{International Journal of Robust and Nonlinear Control}, 2\penalty0 (2):\penalty0 87--105, August 1992.
\newblock ISSN 1049-8923, 1099-1239.
\newblock \doi{10.1002/rnc.4590020202}.
\newblock URL \url{https://onlinelibrary.wiley.com/doi/10.1002/rnc.4590020202}.

\bibitem[Shaked and Theodor(1992)]{shaked_hsub_1992}
U.~Shaked and Y.~Theodor.
\newblock H/sub infinity /-optimal estimation: a tutorial.
\newblock In \emph{[1992] {Proceedings} of the 31st {IEEE} {Conference} on {Decision} and {Control}}, pages 2278--2286, Tucson, AZ, USA, 1992. IEEE.
\newblock ISBN 978-0-7803-0872-5.
\newblock \doi{10.1109/CDC.1992.371384}.
\newblock URL \url{http://ieeexplore.ieee.org/document/371384/}.

\bibitem[Hassibi et~al.(1996)Hassibi, Sayed, and Kailath]{hassibi_linear_1996}
B.~Hassibi, A.H. Sayed, and T.~Kailath.
\newblock Linear estimation in {Krein} spaces. {II}. {Applications}.
\newblock \emph{IEEE Transactions on Automatic Control}, 41\penalty0 (1):\penalty0 34--49, January 1996.
\newblock ISSN 00189286.
\newblock \doi{10.1109/9.481606}.
\newblock URL \url{http://ieeexplore.ieee.org/document/481606/}.

\bibitem[Hassibi et~al.(1999)Hassibi, Sayed, and Kailath]{blackbook}
Babak Hassibi, Ali~H. Sayed, and Thomas Kailath.
\newblock \emph{Indefinite-Quadratic Estimation and Control}.
\newblock Society for Industrial and Applied Mathematics, 1999.
\newblock \doi{10.1137/1.9781611970760}.
\newblock URL \url{https://epubs.siam.org/doi/abs/10.1137/1.9781611970760}.

\bibitem[Doyle et~al.(1988)Doyle, Glover, Khargonekar, and Francis]{doyle_state-space_1988}
John Doyle, Keith Glover, Pramod Khargonekar, and Bruce Francis.
\newblock State-space solutions to standard $h_2$ and $h_\infty$ control problems.
\newblock In \emph{1988 {American} {Control} {Conference}}, pages 1691--1696, Atlanta, GA, USA, June 1988. IEEE.
\newblock \doi{10.23919/ACC.1988.4789992}.
\newblock URL \url{https://ieeexplore.ieee.org/document/4789992/}.

\bibitem[Zhou et~al.(1996)Zhou, Doyle, and Glover]{zhou_robust_1996}
Kemin Zhou, John~Comstock Doyle, and K.~Glover.
\newblock \emph{Robust and optimal control}.
\newblock Prentice Hall, Upper Saddle River, N.J, 1996.
\newblock ISBN 978-0-13-456567-5.

\bibitem[Başar and Bernhard(2008)]{basar_h-optimal_2008}
Tamer Başar and Pierre Bernhard.
\newblock \emph{{$\textit{H}_{\infty}$}-{Optimal} {Control} and {Related} {Minimax} {Design} {Problems}}.
\newblock Birkhäuser Boston, Boston, MA, 2008.
\newblock ISBN 978-0-8176-4756-8 978-0-8176-4757-5.
\newblock \doi{10.1007/978-0-8176-4757-5}.
\newblock URL \url{http://link.springer.com/10.1007/978-0-8176-4757-5}.

\bibitem[Zames(1981)]{zames_feedback_1981}
G.~Zames.
\newblock Feedback and optimal sensitivity: {Model} reference transformations, multiplicative seminorms, and approximate inverses.
\newblock \emph{IEEE Transactions on Automatic Control}, 26\penalty0 (2):\penalty0 301--320, April 1981.
\newblock ISSN 0018-9286.
\newblock \doi{10.1109/TAC.1981.1102603}.
\newblock URL \url{http://ieeexplore.ieee.org/document/1102603/}.

\bibitem[Sabag and Hassibi(2022)]{sabag_regret-optimal_2022}
Oron Sabag and Babak Hassibi.
\newblock Regret-{Optimal} {Filtering} for {Prediction} and {Estimation}.
\newblock \emph{IEEE Transactions on Signal Processing}, 70:\penalty0 5012--5024, 2022.
\newblock ISSN 1053-587X, 1941-0476.
\newblock \doi{10.1109/TSP.2022.3212153}.
\newblock URL \url{https://ieeexplore.ieee.org/document/9911672/}.

\bibitem[Goel and Hassibi(2023)]{goel2023regret}
Gautam Goel and Babak Hassibi.
\newblock Regret-optimal estimation and control.
\newblock \emph{IEEE Transactions on Automatic Control}, 68\penalty0 (5):\penalty0 3041--3053, 2023.

\bibitem[Petersen and Savkin(1999)]{petersen_robust_1999}
Ian~Richard Petersen and Andrey~V. Savkin.
\newblock \emph{Robust {Kalman} filtering for signals and systems with large uncertainties}.
\newblock Control engineering. Birkhäuser, Boston, 1999.
\newblock ISBN 978-0-8176-4089-7.

\bibitem[Kassam and Lim(1977)]{kassam_robust_1977}
Saleem~A. Kassam and Tong~Leong Lim.
\newblock Robust {Wiener} filters.
\newblock \emph{Journal of the Franklin Institute}, 304\penalty0 (4-5):\penalty0 171--185, October 1977.
\newblock ISSN 00160032.
\newblock \doi{10.1016/0016-0032(77)90011-4}.
\newblock URL \url{https://linkinghub.elsevier.com/retrieve/pii/0016003277900114}.

\bibitem[Kassam and Poor(1985)]{kassam_robust_1985}
S.A. Kassam and H.V. Poor.
\newblock Robust techniques for signal processing: {A} survey.
\newblock \emph{Proceedings of the IEEE}, 73\penalty0 (3):\penalty0 433--481, 1985.
\newblock ISSN 0018-9219.
\newblock \doi{10.1109/PROC.1985.13167}.
\newblock URL \url{http://ieeexplore.ieee.org/document/1457435/}.

\bibitem[Wiener(1949)]{wiener_extrapolation_1949}
Norbert Wiener.
\newblock \emph{Extrapolation, {Interpolation}, and {Smoothing} of {Stationary} {Time} {Series}: {With} {Engineering} {Applications}}.
\newblock The MIT Press, August 1949.
\newblock ISBN 978-0-262-25719-0.
\newblock \doi{10.7551/mitpress/2946.001.0001}.
\newblock URL \url{https://direct.mit.edu/books/book/4361/Extrapolation-Interpolation-and-Smoothing-of}.

\bibitem[Poor(1980)]{poor_robust_1980}
H.~Poor.
\newblock On robust wiener filtering.
\newblock \emph{IEEE Transactions on Automatic Control}, 25\penalty0 (3):\penalty0 531--536, June 1980.
\newblock ISSN 0018-9286.
\newblock \doi{10.1109/TAC.1980.1102349}.
\newblock URL \url{http://ieeexplore.ieee.org/document/1102349/}.

\bibitem[Vastola and Poor(1984)]{vastola_robust_1984}
K.~Vastola and H.~Poor.
\newblock Robust {Wiener}- {Kolmogorov} theory.
\newblock \emph{IEEE Transactions on Information Theory}, 30\penalty0 (2):\penalty0 316--327, March 1984.
\newblock ISSN 0018-9448.
\newblock \doi{10.1109/TIT.1984.1056875}.
\newblock URL \url{http://ieeexplore.ieee.org/document/1056875/}.

\bibitem[Levy and Nikoukhah(2004)]{levy_robust_2004}
B.C. Levy and R.~Nikoukhah.
\newblock Robust {Least}-{Squares} {Estimation} {With} a {Relative} {Entropy} {Constraint}.
\newblock \emph{IEEE Transactions on Information Theory}, 50\penalty0 (1):\penalty0 89--104, January 2004.
\newblock ISSN 0018-9448.
\newblock \doi{10.1109/TIT.2003.821992}.
\newblock URL \url{http://ieeexplore.ieee.org/document/1262619/}.

\bibitem[Levy and Nikoukhah(2013)]{levy_robust_2013}
Bernard~C. Levy and Ramine Nikoukhah.
\newblock Robust {State} {Space} {Filtering} {Under} {Incremental} {Model} {Perturbations} {Subject} to a {Relative} {Entropy} {Tolerance}.
\newblock \emph{IEEE Transactions on Automatic Control}, 58\penalty0 (3):\penalty0 682--695, March 2013.
\newblock ISSN 0018-9286, 1558-2523.
\newblock \doi{10.1109/TAC.2012.2219952}.
\newblock URL \url{http://ieeexplore.ieee.org/document/6308696/}.

\bibitem[Zorzi(2017{\natexlab{a}})]{zorzi_robustness_2017}
Mattia Zorzi.
\newblock On the {Robustness} of the {Bayes} and {Wiener} {Estimators} under {Model} {Uncertainty}, May 2017{\natexlab{a}}.
\newblock URL \url{http://arxiv.org/abs/1508.01904}.
\newblock arXiv:1508.01904 [math].

\bibitem[Zorzi(2017{\natexlab{b}})]{zorzi_robust_2017}
Mattia Zorzi.
\newblock Robust {Kalman} {Filtering} {Under} {Model} {Perturbations}.
\newblock \emph{IEEE Transactions on Automatic Control}, 62\penalty0 (6):\penalty0 2902--2907, June 2017{\natexlab{b}}.
\newblock ISSN 0018-9286, 1558-2523.
\newblock \doi{10.1109/TAC.2016.2601879}.
\newblock URL \url{http://ieeexplore.ieee.org/document/7549014/}.

\bibitem[Shafieezadeh~Abadeh et~al.(2018)Shafieezadeh~Abadeh, Nguyen, Kuhn, and Mohajerin~Esfahani]{shafieezadeh_2018}
Soroosh Shafieezadeh~Abadeh, Viet~Anh Nguyen, Daniel Kuhn, and Peyman~Mohajerin Mohajerin~Esfahani.
\newblock Wasserstein distributionally robust kalman filtering.
\newblock In S.~Bengio, H.~Wallach, H.~Larochelle, K.~Grauman, N.~Cesa-Bianchi, and R.~Garnett, editors, \emph{Advances in Neural Information Processing Systems}, volume~31. Curran Associates, Inc., 2018.
\newblock URL \url{https://proceedings.neurips.cc/paper_files/paper/2018/file/15212f24321aa2c3dc8e9acf820f3c15-Paper.pdf}.

\bibitem[Wang et~al.(2021)Wang, Wu, and Lim]{wang_robust_2021}
Shixiong Wang, Zhongming Wu, and Andrew Lim.
\newblock Robust {State} {Estimation} for {Linear} {Systems} {Under} {Distributional} {Uncertainty}.
\newblock \emph{IEEE Transactions on Signal Processing}, 69:\penalty0 5963--5978, 2021.
\newblock ISSN 1053-587X, 1941-0476.
\newblock \doi{10.1109/TSP.2021.3118540}.
\newblock URL \url{https://ieeexplore.ieee.org/document/9563203/}.

\bibitem[Wang and Ye(2022)]{wang_distributionally_2022-2}
Shixiong Wang and Zhi-Sheng Ye.
\newblock Distributionally {Robust} {State} {Estimation} for {Linear} {Systems} {Subject} to {Uncertainty} and {Outlier}.
\newblock \emph{IEEE Transactions on Signal Processing}, 70:\penalty0 452--467, 2022.
\newblock ISSN 1053-587X, 1941-0476.
\newblock \doi{10.1109/TSP.2021.3136804}.
\newblock URL \url{https://ieeexplore.ieee.org/document/9656678/}.

\bibitem[Han(2023)]{han_distributionally_2023}
Bingyan Han.
\newblock Distributionally robust {Kalman} filtering with volatility uncertainty, February 2023.
\newblock URL \url{http://arxiv.org/abs/2302.05993}.
\newblock arXiv:2302.05993 [math].

\bibitem[Brouillon et~al.(2023{\natexlab{a}})Brouillon, Dörfler, and Ferrari-Trecate]{brouillon_regularization_2023}
Jean-Sébastien Brouillon, Florian Dörfler, and Giancarlo Ferrari-Trecate.
\newblock Regularization for {Distributionally} {Robust} {State} {Estimation} and {Prediction}.
\newblock \emph{IEEE Control Systems Letters}, pages 1--1, 2023{\natexlab{a}}.
\newblock ISSN 2475-1456.
\newblock \doi{10.1109/LCSYS.2023.3287803}.
\newblock URL \url{https://ieeexplore.ieee.org/document/10158348/}.

\bibitem[Lotidis et~al.(2023)Lotidis, Bambos, Blanchet, and Li]{lotidis_wasserstein_2023}
Kyriakos Lotidis, Nicholas Bambos, Jose Blanchet, and Jiajin Li.
\newblock Wasserstein {Distributionally} {Robust} {Linear}-{Quadratic} {Estimation} under {Martingale} {Constraints}.
\newblock In Francisco Ruiz, Jennifer Dy, and Jan-Willem van~de Meent, editors, \emph{Proceedings of {The} 26th {International} {Conference} on {Artificial} {Intelligence} and {Statistics}}, volume 206 of \emph{Proceedings of {Machine} {Learning} {Research}}, pages 8629--8644. PMLR, April 2023.
\newblock URL \url{https://proceedings.mlr.press/v206/lotidis23a.html}.

\bibitem[Prabhat and Bhattacharya(2024)]{prabhat_optimal_2024}
Himanshu Prabhat and Raktim Bhattacharya.
\newblock Optimal {State} {Estimation} in the {Presence} of {Non}-{Gaussian} {Uncertainty} via {Wasserstein} {Distance} {Minimization}, March 2024.
\newblock URL \url{http://arxiv.org/abs/2403.13828}.
\newblock arXiv:2403.13828 [cs, eess, math, stat].

\bibitem[Boel et~al.(1997)Boel, James, and Petersen]{boel_robustness_1997}
R.K. Boel, M.R. James, and I.R. Petersen.
\newblock Robustness and risk-sensitive filtering.
\newblock In \emph{Proceedings of the 36th {IEEE} {Conference} on {Decision} and {Control}}, volume~3, pages 2273--2278, San Diego, CA, USA, 1997. IEEE.
\newblock ISBN 978-0-7803-4187-6.
\newblock \doi{10.1109/CDC.1997.657112}.
\newblock URL \url{http://ieeexplore.ieee.org/document/657112/}.

\bibitem[Hansen and Sargent(2005)]{hansen_robust_2005}
Lars~Peter Hansen and Thomas~J. Sargent.
\newblock Robust estimation and control under commitment.
\newblock \emph{Journal of Economic Theory}, 124\penalty0 (2):\penalty0 258--301, October 2005.
\newblock ISSN 00220531.
\newblock \doi{10.1016/j.jet.2005.06.006}.
\newblock URL \url{https://linkinghub.elsevier.com/retrieve/pii/S0022053105001675}.

\bibitem[Jacobson(1973)]{jacobson_optimal_1973}
D.~Jacobson.
\newblock Optimal stochastic linear systems with exponential performance criteria and their relation to deterministic differential games.
\newblock \emph{IEEE Transactions on Automatic Control}, 18\penalty0 (2):\penalty0 124--131, April 1973.
\newblock ISSN 0018-9286.
\newblock \doi{10.1109/TAC.1973.1100265}.
\newblock URL \url{http://ieeexplore.ieee.org/document/1100265/}.

\bibitem[Speyer et~al.(1974)Speyer, Deyst, and Jacobson]{speyer_optimization_1974}
J.~Speyer, J.~Deyst, and D.~Jacobson.
\newblock Optimization of stochastic linear systems with additive measurement and process noise using exponential performance criteria.
\newblock \emph{IEEE Transactions on Automatic Control}, 19\penalty0 (4):\penalty0 358--366, August 1974.
\newblock ISSN 0018-9286.
\newblock \doi{10.1109/TAC.1974.1100606}.
\newblock URL \url{http://ieeexplore.ieee.org/document/1100606/}.

\bibitem[Speyer et~al.(1992)Speyer, Fan, and Banavar]{speyer_optimal_1992}
J.L. Speyer, C.-H. Fan, and R.N. Banavar.
\newblock Optimal stochastic estimation with exponential cost criteria.
\newblock In \emph{[1992] {Proceedings} of the 31st {IEEE} {Conference} on {Decision} and {Control}}, pages 2293--2299, Tucson, AZ, USA, 1992. IEEE.
\newblock ISBN 978-0-7803-0872-5.
\newblock \doi{10.1109/CDC.1992.371382}.
\newblock URL \url{http://ieeexplore.ieee.org/document/371382/}.

\bibitem[Whittle(1981)]{whittle_risk-sensitive_1981}
P.~Whittle.
\newblock Risk-sensitive linear/quadratic/gaussian control.
\newblock \emph{Advances in Applied Probability}, 13\penalty0 (4):\penalty0 764--777, December 1981.
\newblock ISSN 0001-8678, 1475-6064.
\newblock \doi{10.2307/1426972}.
\newblock URL \url{https://www.cambridge.org/core/product/identifier/S0001867800036508/type/journal_article}.

\bibitem[Hu and Hong(2012)]{hu_kullback-leibler_2012}
Zhaolin Hu and L~Jeff Hong.
\newblock Kullback-{Leibler} divergence constrained distributionally robust optimization, November 2012.
\newblock URL \url{https://optimization-online.org/2012/11/3677/}.

\bibitem[Villani(2009)]{villani_optimal_2009}
Cédric Villani.
\newblock \emph{Optimal transport: old and new}.
\newblock Number 338 in Grundlehren der mathematischen {Wissenschaften}. Springer, Berlin, 2009.
\newblock ISBN 978-3-540-71049-3.
\newblock OCLC: ocn244421231.

\bibitem[Arjovsky and Bottou(2017)]{arjovsky_towards_2017}
Martin Arjovsky and Léon Bottou.
\newblock Towards {Principled} {Methods} for {Training} {Generative} {Adversarial} {Networks}.
\newblock \emph{arXiv:1701.04862 [cs, stat]}, January 2017.
\newblock URL \url{http://arxiv.org/abs/1701.04862}.
\newblock arXiv: 1701.04862.

\bibitem[Liu et~al.(2020)Liu, Zhu, Yamada, and Yang]{liu_semantic_2020}
Yanbin Liu, Linchao Zhu, Makoto Yamada, and Yi~Yang.
\newblock Semantic {Correspondence} as an {Optimal} {Transport} {Problem}.
\newblock In \emph{2020 {IEEE}/{CVF} {Conference} on {Computer} {Vision} and {Pattern} {Recognition} ({CVPR})}, pages 4462--4471, Seattle, WA, USA, June 2020. IEEE.
\newblock ISBN 978-1-72817-168-5.
\newblock \doi{10.1109/CVPR42600.2020.00452}.
\newblock URL \url{https://ieeexplore.ieee.org/document/9156442/}.

\bibitem[Ozaydin et~al.(2024)Ozaydin, Zhang, Bhattacharjee, Süsstrunk, and Salzmann]{ozaydin_omh_2024}
Baran Ozaydin, Tong Zhang, Deblina Bhattacharjee, Sabine Süsstrunk, and Mathieu Salzmann.
\newblock {OMH}: {Structured} {Sparsity} via {Optimally} {Matched} {Hierarchy} for {Unsupervised} {Semantic} {Segmentation}, April 2024.
\newblock URL \url{http://arxiv.org/abs/2403.06546}.
\newblock arXiv:2403.06546 [cs].

\bibitem[Taha et~al.(2023)Taha, Yan, and Bitar]{DRORO}
Feras~Al Taha, Shuhao Yan, and Eilyan Bitar.
\newblock A {Distributionally} {Robust} {Approach} to {Regret} {Optimal} {Control} using the {Wasserstein} {Distance}.
\newblock In \emph{2023 62nd {IEEE} {Conference} on {Decision} and {Control} ({CDC})}, pages 2768--2775, Singapore, Singapore, December 2023. IEEE.
\newblock ISBN 9798350301243.
\newblock \doi{10.1109/CDC49753.2023.10384311}.
\newblock URL \url{https://ieeexplore.ieee.org/document/10384311/}.

\bibitem[Taskesen et~al.(2023)Taskesen, Iancu, Kocyigit, and Kuhn]{tacskesen2023distributionally}
Bahar Taskesen, Dan~A Iancu, Cagil Kocyigit, and Daniel Kuhn.
\newblock Distributionally robust linear quadratic control.
\newblock \emph{arXiv preprint arXiv:2305.17037}, 2023.

\bibitem[Aolaritei et~al.(2023)Aolaritei, Fochesato, Lygeros, and Dörfler]{aolaritei_wasserstein_2023}
Liviu Aolaritei, Marta Fochesato, John Lygeros, and Florian Dörfler.
\newblock Wasserstein {Tube} {MPC} with {Exact} {Uncertainty} {Propagation}, April 2023.
\newblock URL \url{http://arxiv.org/abs/2304.12093}.
\newblock arXiv:2304.12093 [math].

\bibitem[Brouillon et~al.(2023{\natexlab{b}})Brouillon, Martin, Lygeros, Dörfler, and Trecate]{brouillon2023distributionally}
Jean-Sébastien Brouillon, Andrea Martin, John Lygeros, Florian Dörfler, and Giancarlo~Ferrari Trecate.
\newblock Distributionally robust infinite-horizon control: from a pool of samples to the design of dependable controllers, 2023{\natexlab{b}}.

\bibitem[Hajar et~al.(2023)Hajar, Kargin, and Hassibi]{hajar_wasserstein_2023}
Joudi Hajar, Taylan Kargin, and Babak Hassibi.
\newblock Wasserstein {Distributionally} {Robust} {Regret}-{Optimal} {Control} under {Partial} {Observability}.
\newblock In \emph{2023 59th {Annual} {Allerton} {Conference} on {Communication}, {Control}, and {Computing} ({Allerton})}, pages 1--6, Monticello, IL, USA, September 2023. IEEE.
\newblock ISBN 9798350328141.
\newblock \doi{10.1109/Allerton58177.2023.10313386}.
\newblock URL \url{https://ieeexplore.ieee.org/document/10313386/}.

\bibitem[Kargin et~al.(2023)Kargin, Hajar, Malik, and Hassibi]{kargin_wasserstein_2023}
Taylan Kargin, Joudi Hajar, Vikrant Malik, and Babak Hassibi.
\newblock Wasserstein {Distributionally} {Robust} {Regret}-{Optimal} {Control} in the {Infinite}-{Horizon}, December 2023.
\newblock URL \url{http://arxiv.org/abs/2312.17376}.
\newblock arXiv:2312.17376 [cs, eess, math].

\bibitem[Blau and Michaeli(2019)]{blau2019rethinking}
Yochai Blau and Tomer Michaeli.
\newblock Rethinking lossy compression: The rate-distortion-perception tradeoff.
\newblock In \emph{International Conference on Machine Learning}, pages 675--685, Long Beach, California, USA, July 2019. PMLR.

\bibitem[Lei et~al.(2021)Lei, Hassani, and Bidokhti]{lei2021out}
Eric Lei, Hamed Hassani, and Shirin~Saeedi Bidokhti.
\newblock Out-of-distribution robustness in deep learning compression.
\newblock \emph{arXiv preprint arXiv:2110.07007}, October 2021.

\bibitem[Malik et~al.(2024)Malik, Kargin, Kostina, and Hassibi]{malik_distributionally_2024}
Vikrant Malik, Taylan Kargin, Victoria Kostina, and Babak Hassibi.
\newblock A {Distributionally} {Robust} {Approach} to {Shannon} {Limits} using the {Wasserstein} {Distance}, May 2024.
\newblock URL \url{http://arxiv.org/abs/2405.06528}.
\newblock arXiv:2405.06528 [cs, math].

\bibitem[Zhao and Guan(2018)]{zhao_data-driven_2018}
Chaoyue Zhao and Yongpei Guan.
\newblock Data-driven risk-averse stochastic optimization with {Wasserstein} metric.
\newblock \emph{Operations Research Letters}, 46\penalty0 (2):\penalty0 262--267, March 2018.
\newblock ISSN 01676377.
\newblock \doi{10.1016/j.orl.2018.01.011}.
\newblock URL \url{https://linkinghub.elsevier.com/retrieve/pii/S0167637718300506}.

\bibitem[Mohajerin~Esfahani and Kuhn(2018)]{mohajerin_esfahani_data-driven_2018}
Peyman Mohajerin~Esfahani and Daniel Kuhn.
\newblock Data-driven distributionally robust optimization using the {Wasserstein} metric: performance guarantees and tractable reformulations.
\newblock \emph{Mathematical Programming}, 171\penalty0 (1-2):\penalty0 115--166, September 2018.
\newblock ISSN 0025-5610, 1436-4646.
\newblock \doi{10.1007/s10107-017-1172-1}.
\newblock URL \url{http://link.springer.com/10.1007/s10107-017-1172-1}.

\bibitem[Kuhn et~al.(2019)Kuhn, Esfahani, Nguyen, and Shafieezadeh-Abadeh]{kuhn2019wasserstein}
Daniel Kuhn, Peyman~Mohajerin Esfahani, Viet~Anh Nguyen, and Soroosh Shafieezadeh-Abadeh.
\newblock Wasserstein distributionally robust optimization: Theory and applications in machine learning.
\newblock In \emph{Operations research \& management science in the age of analytics}, pages 130--166. Informs, 2019.

\bibitem[Gao and Kleywegt(2022)]{gao_distributionally_2022}
Rui Gao and Anton~J. Kleywegt.
\newblock Distributionally {Robust} {Stochastic} {Optimization} with {Wasserstein} {Distance}, April 2022.
\newblock URL \url{http://arxiv.org/abs/1604.02199}.
\newblock arXiv:1604.02199 [math].

\bibitem[Yue et~al.(2022)Yue, Kuhn, and Wiesemann]{yue_linear_2022}
Man-Chung Yue, Daniel Kuhn, and Wolfram Wiesemann.
\newblock On linear optimization over {Wasserstein} balls.
\newblock \emph{Mathematical Programming}, 195\penalty0 (1-2):\penalty0 1107--1122, September 2022.
\newblock ISSN 0025-5610, 1436-4646.
\newblock \doi{10.1007/s10107-021-01673-8}.
\newblock URL \url{https://link.springer.com/10.1007/s10107-021-01673-8}.

\bibitem[Blanchet et~al.(2023)Blanchet, Kuhn, Li, and Taskesen]{blanchet_unifying_2023}
Jose Blanchet, Daniel Kuhn, Jiajin Li, and Bahar Taskesen.
\newblock Unifying {Distributionally} {Robust} {Optimization} via {Optimal} {Transport} {Theory}, August 2023.
\newblock URL \url{http://arxiv.org/abs/2308.05414}.
\newblock arXiv:2308.05414 [math, stat].

\bibitem[Blanchet et~al.(2024)Blanchet, Li, Lin, and Zhang]{blanchet_distributionally_2024}
Jose Blanchet, Jiajin Li, Sirui Lin, and Xuhui Zhang.
\newblock Distributionally {Robust} {Optimization} and {Robust} {Statistics}, January 2024.
\newblock URL \url{http://arxiv.org/abs/2401.14655}.
\newblock arXiv:2401.14655 [stat].

\bibitem[Nguyen et~al.(2021)Nguyen, Shafieezadeh-Abadeh, Kuhn, and Esfahani]{nguyen_bridging_2021}
Viet~Anh Nguyen, Soroosh Shafieezadeh-Abadeh, Daniel Kuhn, and Peyman~Mohajerin Esfahani.
\newblock Bridging {Bayesian} and {Minimax} {Mean} {Square} {Error} {Estimation} via {Wasserstein} {Distributionally} {Robust} {Optimization}, January 2021.
\newblock URL \url{http://arxiv.org/abs/1911.03539}.
\newblock arXiv:1911.03539 [cs, math, stat].

\bibitem[Kailath et~al.(2000)Kailath, Sayed, and Hassibi]{kailath_linear_2000}
Thomas Kailath, Ali~H. Sayed, and Babak Hassibi.
\newblock \emph{Linear estimation}.
\newblock Prentice {Hall} information and system sciences series. Prentice Hall, Upper Saddle River, N.J, 2000.
\newblock ISBN 978-0-13-022464-4.

\bibitem[Wiener and Hopf(1931)]{wiener1931klasse}
N.~Wiener and E.~Hopf.
\newblock \emph{{\"U}ber eine Klasse singul{\"a}rer Integralgleichungen}.
\newblock Sitzungsberichte der Preussischen Akademie der Wissenschaften. Physikalisch-mathematische Klasse. Akad. d. Wiss., 1931.

\bibitem[Frank and Wolfe(1956)]{frank_algorithm_1956}
Marguerite Frank and Philip Wolfe.
\newblock An algorithm for quadratic programming.
\newblock \emph{Naval Research Logistics Quarterly}, 3\penalty0 (1-2):\penalty0 95--110, March 1956.
\newblock ISSN 0028-1441, 1931-9193.
\newblock \doi{10.1002/nav.3800030109}.
\newblock URL \url{https://onlinelibrary.wiley.com/doi/10.1002/nav.3800030109}.

\bibitem[Jaggi(2013)]{jaggi_revisiting_2013}
Martin Jaggi.
\newblock Revisiting {Frank}-{Wolfe}: {Projection}-{Free} {Sparse} {Convex} {Optimization}.
\newblock In Sanjoy Dasgupta and David McAllester, editors, \emph{Proceedings of the 30th {International} {Conference} on {Machine} {Learning}}, volume~28 of \emph{Proceedings of {Machine} {Learning} {Research}}, pages 427--435, Atlanta, Georgia, USA, June 2013. PMLR.
\newblock URL \url{https://proceedings.mlr.press/v28/jaggi13.html}.
\newblock Issue: 1.

\bibitem[Goldstein(1964)]{goldstein1964convex}
A.~A. Goldstein.
\newblock {Convex programming in Hilbert space}.
\newblock \emph{Bulletin of the American Mathematical Society}, 70\penalty0 (5):\penalty0 709 -- 710, 1964.

\bibitem[Danskin(1966)]{danskin}
John~M. Danskin.
\newblock The theory of max-min, with applications.
\newblock \emph{SIAM Journal on Applied Mathematics}, 14\penalty0 (4):\penalty0 641--664, 1966.
\newblock ISSN 00361399.
\newblock URL \url{http://www.jstor.org/stable/2946123}.

\bibitem[Sayed and Kailath(2001)]{sayed_survey_2001}
A.~H. Sayed and T.~Kailath.
\newblock A survey of spectral factorization methods.
\newblock \emph{Numerical Linear Algebra with Applications}, 8\penalty0 (6-7):\penalty0 467--496, September 2001.
\newblock ISSN 1070-5325, 1099-1506.
\newblock \doi{10.1002/nla.250}.
\newblock URL \url{https://onlinelibrary.wiley.com/doi/10.1002/nla.250}.

\bibitem[Rino(1970)]{rino_factorization_1970}
C.~Rino.
\newblock Factorization of spectra by discrete {Fourier} transforms ({Corresp}.).
\newblock \emph{IEEE Transactions on Information Theory}, 16\penalty0 (4):\penalty0 484--485, July 1970.
\newblock ISSN 0018-9448.
\newblock \doi{10.1109/TIT.1970.1054502}.
\newblock URL \url{http://ieeexplore.ieee.org/document/1054502/}.

\bibitem[Kolmogorov(1939)]{kolmogorov1939interpolation}
Andrei~Nikolaevitch Kolmogorov.
\newblock Sur l’interpolation et extrapolation des suites stationnaires.
\newblock \emph{CR Acad. Sci}, 208\penalty0 (S 2043), 1939.

\bibitem[Wilson(1972)]{wilson_factorization_1972}
G.~Tunnicliffe Wilson.
\newblock The {Factorization} of {Matricial} {Spectral} {Densities}.
\newblock \emph{SIAM Journal on Applied Mathematics}, 23\penalty0 (4):\penalty0 420--426, 1972.
\newblock ISSN 00361399.
\newblock \doi{10.1137/0123044}.
\newblock URL \url{http://www.jstor.org/stable/2100089}.
\newblock Publisher: Society for Industrial and Applied Mathematics.

\bibitem[Ephremidze(2010)]{ephremidze_elementary_2010}
Lasha Ephremidze.
\newblock An {Elementary} {Proof} of the {Polynomial} {Matrix} {Spectral} {Factorization} {Theorem}, November 2010.
\newblock URL \url{http://arxiv.org/abs/1011.3777}.
\newblock arXiv:1011.3777 [math].

\bibitem[Lacoste-Julien and Jaggi(2014)]{lacoste-julien_affine_2014}
Simon Lacoste-Julien and Martin Jaggi.
\newblock An {Affine} {Invariant} {Linear} {Convergence} {Analysis} for {Frank}-{Wolfe} {Algorithms}, January 2014.
\newblock URL \url{http://arxiv.org/abs/1312.7864}.
\newblock arXiv:1312.7864 [math].

\bibitem[Shannon(1949)]{shannon1949communication}
C.E. Shannon.
\newblock Communication in the presence of noise.
\newblock \emph{Proceedings of the IRE}, 37\penalty0 (1):\penalty0 10--21, 1949.
\newblock \doi{10.1109/JRPROC.1949.232969}.

\bibitem[Bhatia et~al.(2017)Bhatia, Jain, and Lim]{bhatia_bures-wasserstein_2017}
Rajendra Bhatia, Tanvi Jain, and Yongdo Lim.
\newblock On the {Bures}-{Wasserstein} distance between positive definite matrices, December 2017.
\newblock URL \url{http://arxiv.org/abs/1712.01504}.
\newblock arXiv:1712.01504 [math].

\bibitem[Gelbrich(1990)]{gelbrich}
Matthias Gelbrich.
\newblock On a formula for the {L2 Wasserstein} metric between measures on {Euclidean and Hilbert spaces}.
\newblock \emph{Mathematische Nachrichten}, 147\penalty0 (1):\penalty0 185--203, 1990.

\end{thebibliography}
\bibliographystyle{unsrtnat}

%%%%%%%%%%%%%%%%%%%%%%%%%%%%%%%%%%%%%%%%%%%%%%%%%%%%%%   
%%%%%%%%%%%%%%%%%%%%%% APPENDIX %%%%%%%%%%%%%%%%%%%%%%
%%%%%%%%%%%%%%%%%%%%%%%%%%%%%%%%%%%%%%%%%%%%%%%%%%%%%% 
\newpage
\appendix
\onecolumn
\begin{center}
{\huge Appendix}
\end{center}

%\section{Organization of the Appendix}
%\label{ap:organization}
%\input{appendix/appendix_0_organization}

\section{{Simulation Results}} \label{ap:sim}

\subsection{Another tracking problem}
We study another tracking problem, standard in the filtering community, whose state-space model is 

$F=\begin{bmatrix}
    1&0&0&0\\\Delta t &1&0&0\\0&0&1&0\\0&0&\Delta t&1
\end{bmatrix}$, $G=\begin{bmatrix}
    \Delta t &0\\0.5 (\Delta t)^2&0\\0&\Delta t\\0&0.5 (\Delta t)^2
\end{bmatrix}$,$H=\begin{bmatrix}
    0&1&0&0\\0&0&0&1
\end{bmatrix}$, $L=\begin{bmatrix}
    0&0&0&1
\end{bmatrix}$, 

with $\Delta t=1$.
The results are shown in the plots below.
\begin{figure}[htbp]
	\centering

		\centering
		 \includegraphics[width=0.4\textwidth]{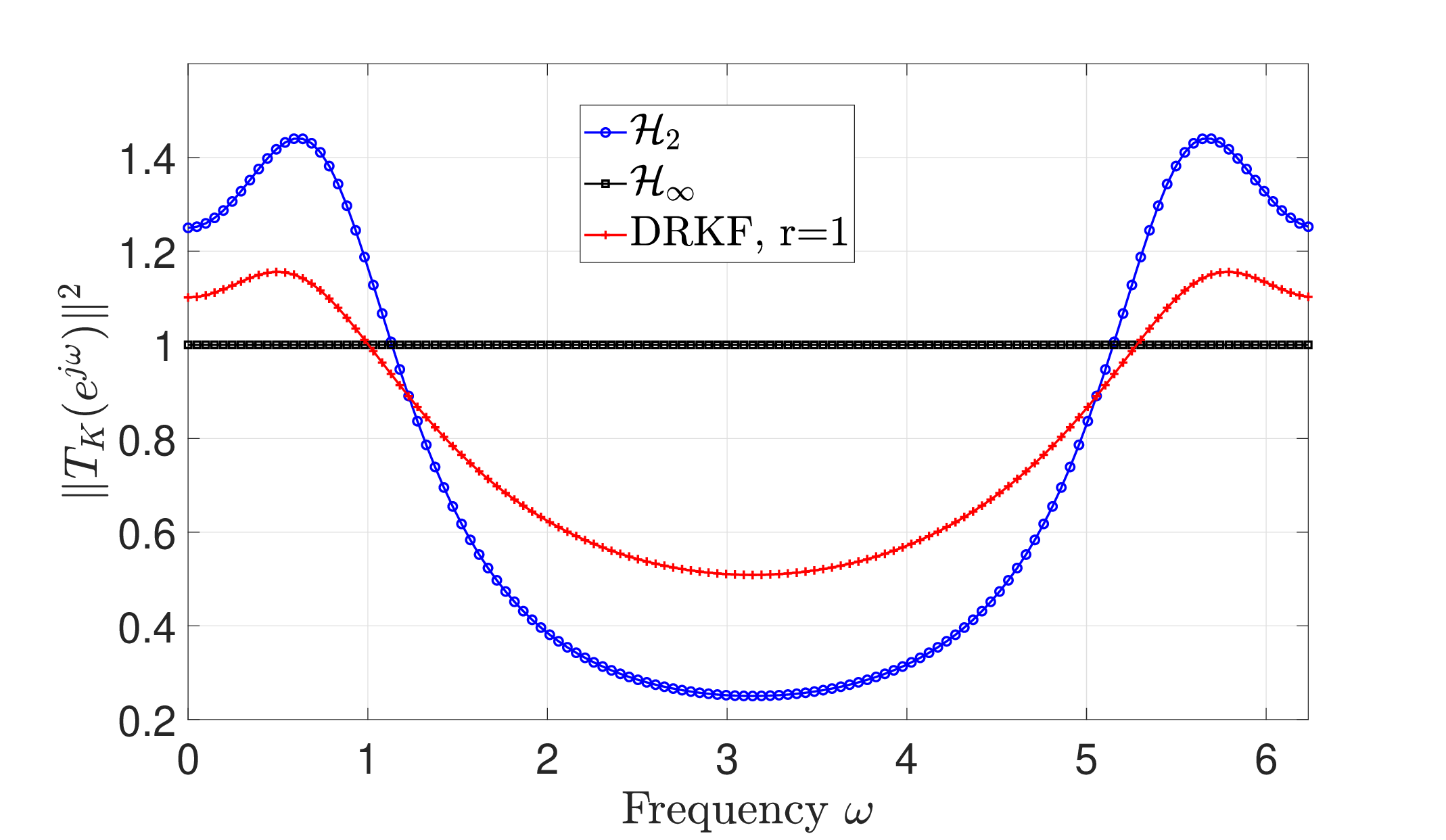}
       
		\caption{The frequency response of different filters ($\mathcal{H}_2, \mathcal{H}_{\infty}$ and DRKF) for the tracking problem in section \ref{ap:sim}. The worst-case expected MSE is 3.99 for $H_\infty$ , 3.77 for $H_2$ and 3.47 (lowest) for DRKF. }
		\label{fig:fig_m_freq_12}

\end{figure}

\begin{figure}[ht]
    \centering
    %\vspace{-0.2cm}
        \begin{subfigure}[b]{0.45\textwidth}
        \centering
        \includegraphics[width=\textwidth]{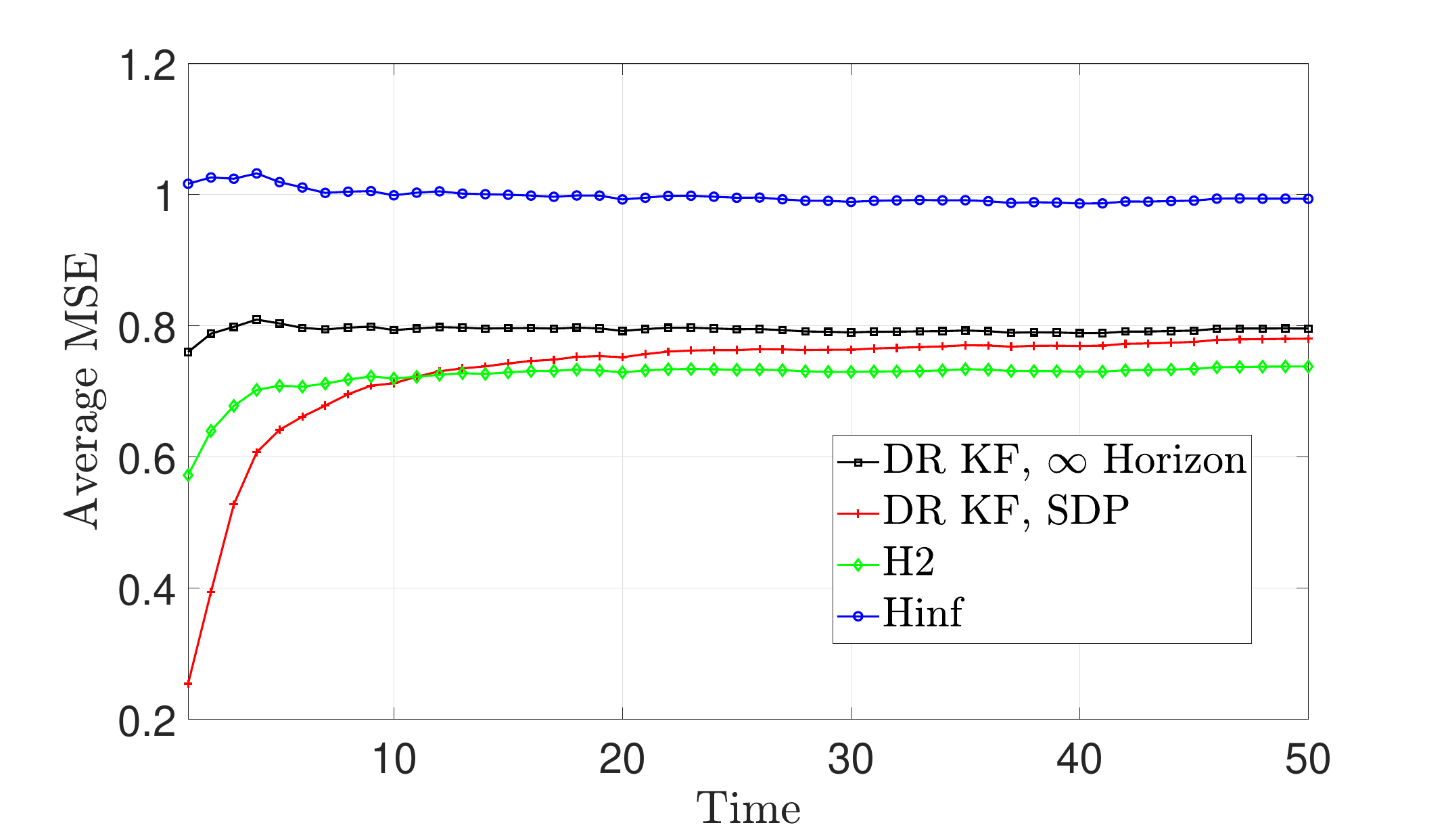}
        \caption{White noise}
        \label{fig:figa1}
    \end{subfigure}
    \hfill
    \begin{subfigure}[b]{0.45\textwidth}
        \centering
        \includegraphics[width=\textwidth]{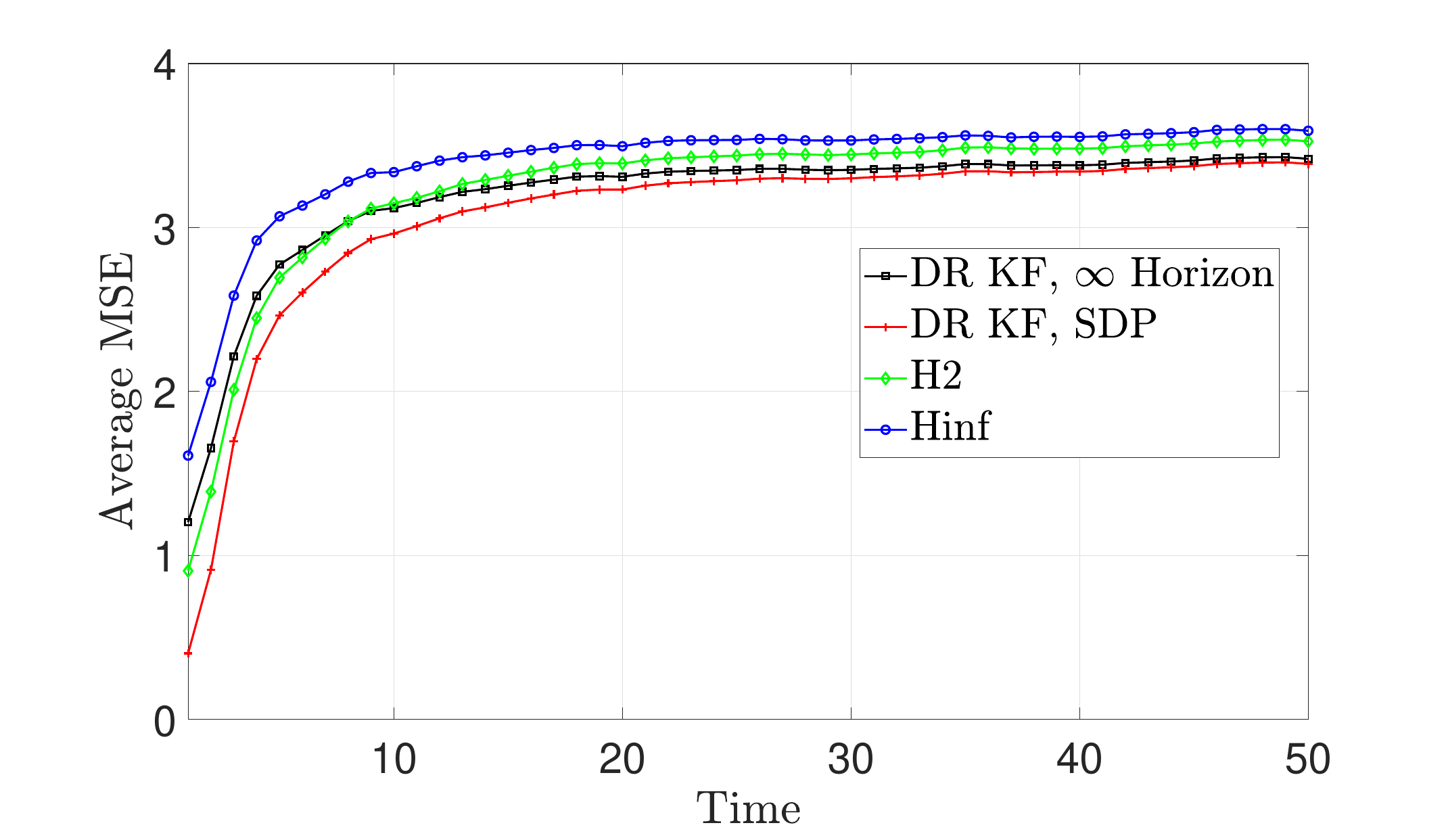}
        \caption{Worst noise for DR-KF, SDP}
        \label{fig:figc1}
    \end{subfigure}

    %\vspace{-0.2cm}
    \caption{The average MSE of the different filters horizon under different disturbances for the tracking problem in section \ref{ap:sim}. (a) is white noise, while (b) is the worst-case noise for the finite horizon DR KF (SDP). While the KF performs best under gaussian noise, the DRKF achieves the lowest error in most of other scenarios (including the more realistic case of correlated noise), and the finite and infinite horizon achieve similar avergae MSE at the end of the horizon.}
    \label{fig:time_domain2}
\end{figure}

%\newpage
%\section{SDP formulation for the Finite-Horizon} \label{ap:finite horion sdp stuff}
%\input{appendix/appendix_0_organization}
%\newpage

\section{Additional Discussion on the Problem Setup} \label{ap:problem setup}

\subsection{Explicit Form of Finite-Horizon Model in \texorpdfstring{\eqref{eq: finite horizon model}}{(3)}}\label{app:explicit H and L}
The causal linear measurement model for the finite-horizon case in \eqref{eq: finite horizon model} can be stated explicitly as follows:
\begin{subequations}
    \begin{equation}
    \underbrace{\begin{bmatrix}
        y_{0} \\ y_{1} \\ y_{2} \\ \vdots  \\ y_{T}
    \end{bmatrix}}_{\y} = \underbrace{\begin{bmatrix}
        C_y    & 0   & 0  & \dots & 0  \\
        C_y A   & C_y B  & 0  & \dots & 0  \\
        C_y A^2 & C_y AB & C_y B & \dots & 0  \\
        \vdots & \vdots & \vdots & \ddots & \vdots \\
        C_y A^{T} & C_y A^{T\-1}B & C_y A^{T\-2}B & \ddots & C_y B  
    \end{bmatrix}}_{\HH_{T}}
    \underbrace{\begin{bmatrix}
         x_{0} \\ w_{0} \\  w_{1} \\ \vdots  \\ w_{T\-1}
    \end{bmatrix}}_{\w} + \underbrace{\begin{bmatrix}
         v_{0} \\ v_{0} \\  v_{1} \\ \vdots  \\ v_{T}
    \end{bmatrix}}_{\vv}
\end{equation}
\begin{equation}
    \underbrace{\begin{bmatrix}
        s_{0} \\ s_{1} \\ s_{2} \\ \vdots  \\ s_{T}
    \end{bmatrix}}_{\s} = \underbrace{\begin{bmatrix}
        C_s    & 0   & 0  & \dots & 0  \\
        C_s A   & C_s B  & 0  & \dots & 0  \\
        C_s A^2 & C_s AB & C_s B & \dots & 0  \\
        \vdots & \vdots & \vdots & \ddots & \vdots \\
        C_s A^{T} & C_s A^{T\-1}B & C_s A^{T\-2}B & \ddots & C_s B  
    \end{bmatrix}}_{\L_{T}}
    \underbrace{\begin{bmatrix}
         x_{0} \\ w_{0} \\  w_{1} \\ \vdots  \\ w_{T\-1}
    \end{bmatrix}}_{\w}
\end{equation}
\end{subequations}

A similar construction of $\HH_T$ and $\L_T$ for time-varying systems can be performed by replacing the causal block elements of $\HH_T$ and $\L_T$ with appropriate coefficients derived from the time-varying dynamics.

\begin{comment}
    \subsection{Explicit Form of Infinite-Horizon Model in \eqref{eq: infinite horizon model}}\label{app: infinite horizon dynamics}

For an arbitrary initial state $x_0 \in \R^{\d_x}$, which could be random, the measurement and the target signal at a time $t\geq 0$ can be expanded explicitly as
\begin{align}
    y_t &= C_y A^t x_0 + \sum_{\tau=0}^{t-1} C_y A^{t-\tau-1} B w_\tau + v_t,\\
    s_t &= C_s A^t x_0 + \sum_{\tau=0}^{t-1} C_s A^{t-\tau-1} B w_\tau
\end{align}

\tk{left here}
\begin{align}
    \hat{s}_t &= \sum_{\tau=0}^{t} K_{t-\tau} y_{\tau}
\end{align}

\begin{align}
    e_t &=  \sum_{\tau=0}^{t} K_{t-\tau} y_{\tau} - s_t \\
    &= \sum_{\tau=0}^{t} K_{t-\tau} \pr{C_y A^\tau x_0 + \sum_{s=0}^{\tau-1} C_y A^{\tau-s-1} B w_s + v_\tau} - \pr{C_s A^t x_0 + \sum_{\tau=0}^{t-1} C_s A^{t-\tau-1} B w_\tau}
\end{align}

\subsection{Handling Uncertainty in State-Space Parameters}\label{app: modeling error}

\tk{state space model error}

\subsection{Scaling of the Ambiguity Set Radius}\label{app: radius scaling}
\tk{a theorem on the scaling of the scaling of Was distance of Gaussian processes  }
\end{comment}

\section{Proofs of Theorems Related to Finite-Horizon Filtering}\label{app: finite horizon proofs}

\subsection{Proof of \texorpdfstring{\cref{thm:minimax}}{Theorem 3.1}}
Before we proceed with the proof, we first state the following useful deifnitions and results.

\begin{definition}[{Bures-Wasserstein distance \cite{bhatia_bures-wasserstein_2017}}] \label{def:bw distance}
For any two psd matrices $\Sigma_1,\Sigma_2 \in \Sym_{+}^{d}$, the Bures-Wasserstein distance between them is defined as follows:
    \begin{equation}\label{eq:bw_dist}
     \BW(\Sigma_1, \Sigma_2) \triangleq \sqrt{\Tr\br{ \Sigma_1 + \Sigma_2 - 2 \pr{\Sigma_1^{1/2} \Sigma_2 \Sigma_1^{1/2}}^{1/2} }  }.
\end{equation}
\end{definition}

\begin{definition}[{Gelbrich distance \cite{gelbrich}}]\label{def:gelbrich}
   For any two distributions $\Pr_1, \Pr_2 \in \Prob(\R^d)$ with means $\mu_1,\mu_2 \in \R^d$ and covariances $\Sigma_1,\Sigma_2 \in \Sym_{+}^{d}$, respectively, the Gelbrich distance between them is defined as follows: 
\begin{equation}\label{eq:gelbrich}
    \mathsf{G}(\Pr_1, \Pr_2) \triangleq \sqrt{\norm{\mu_1  -\mu_2}^2 +   \BW(\Sigma_1, \Sigma_2)^2 }.
\end{equation}
\end{definition}

\begin{lemma}[{Gelbrich bound \cite[Thm. 2.1]{gelbrich}}] \label{lem:gelbrich bound}
Consider two distributions $\Pr_1, \Pr_2 \in \Prob(\R^d)$ with means $\mu_1,\mu_2 \in \R^d$ and covariances $\Sigma_1,\Sigma_2 \in \Sym_{+}^{d}$, respectively. The $\Was$-distance between them satisfies
\begin{equation}\label{eq:gelbrich bound}
    \Was(\Pr_1, \Pr_2) \geq \mathsf{G}(\Pr_1, \Pr_2),
\end{equation}
where equality is attained if both $\Pr_1$ and $\Pr_2$ are Gaussian distributions.
\end{lemma}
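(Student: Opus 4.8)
The plan is to reduce the evaluation of $\Was(\Pr_1,\Pr_2)$ to a finite-dimensional optimization over admissible cross-covariances, and then relax the feasibility requirement to a single positive-semidefiniteness condition. First I would fix an arbitrary coupling $\pi$ of $(\w_1,\w_2)$ with marginals $\Pr_1,\Pr_2$ and expand its transport cost purely in terms of moments,
\begin{equation*}
    \E_\pi\br{\norm{\w_1-\w_2}^2} = \norm{\mu_1-\mu_2}^2 + \Tr(\Sigma_1) + \Tr(\Sigma_2) - 2\Tr(C),
\end{equation*}
where $C \defeq \E_\pi\br{(\w_1-\mu_1)(\w_2-\mu_2)^\tp}$ is the cross-covariance induced by $\pi$. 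The crucial observation is that $\pi$ enters the cost only through $\Tr(C)$, so minimizing the cost over couplings is equivalent to maximizing $\Tr(C)$ over all cross-covariances that some coupling with the prescribed marginals can realize.

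Next I would relax this feasible set. Every coupling yields a joint covariance $\left[\begin{smallmatrix}\Sigma_1 & C\\ C^\tp & \Sigma_2\end{smallmatrix}\right]\psdgeq 0$, which is necessary but, in general, not sufficient for realizability. Retaining only this block-PSD constraint produces the lower bound
\begin{equation*}
    \Was(\Pr_1,\Pr_2)^2 \geq \norm{\mu_1-\mu_2}^2 + \Tr(\Sigma_1) + \Tr(\Sigma_2) - 2\,\max\cl{\,\Tr(C)\;:\;\left[\begin{smallmatrix}\Sigma_1 & C\\ C^\tp & \Sigma_2\end{smallmatrix}\right]\psdgeq 0\,}.
\end{equation*}
I would then solve this trace maximization in closed form: parametrizing the admissible cross-covariances as $C = \Sigma_1^{1/2} O\, \Sigma_2^{1/2}$ for a contraction $O$ (via the Schur complement) and applying a singular-value / Cauchy--Schwarz bound on $\Tr(C)$, the optimal value is $\Tr\big[(\Sigma_1^{1/2}\Sigma_2\Sigma_1^{1/2})^{1/2}\big]$. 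Substituting back recovers precisely $\BW(\Sigma_1,\Sigma_2)^2$ of \cref{def:bw distance}, and hence $\mathsf{G}(\Pr_1,\Pr_2)^2$ of \cref{def:gelbrich}, which establishes the inequality.

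For the equality claim I would argue that the relaxation is tight when both marginals are Gaussian. In that case any $C$ satisfying the block-PSD condition is the cross-covariance of a genuine jointly Gaussian law with the prescribed marginals; in particular the maximizer $C^\star$ is attained by an explicit affine Gaussian coupling, so the lower bound holds with equality. The hard part will be the closed-form resolution of the semidefinite trace maximization subject to the block-PSD constraint --- i.e. simultaneously proving the value $\Tr[(\Sigma_1^{1/2}\Sigma_2\Sigma_1^{1/2})^{1/2}]$ and exhibiting the attaining $C^\star$ --- with the delicate case being singular $\Sigma_1$ or $\Sigma_2$, where one must pass to pseudoinverses or a limiting argument. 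By comparison, the moment expansion and the Gaussian tightness step are routine.
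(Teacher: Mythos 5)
The paper never proves this lemma: it is imported wholesale as Theorem~2.1 of Gelbrich's paper \cite{gelbrich} and used as a black box in the proof of \cref{thm:minimax}. Your proposal therefore cannot be compared against an internal argument; what it does is supply the standard self-contained derivation, and it is correct. The moment expansion of the transport cost is right (the cross terms with $\mu_1-\mu_2$ vanish because the coupling has the prescribed marginals), the observation that the coupling enters only through $\Tr(C)$ is the key structural point, and the relaxation to the block-PSD constraint goes in the correct direction, since enlarging the feasible set of cross-covariances can only increase the supremum of $\Tr(C)$ and hence decrease the lower bound. The closed-form value of the trace maximization is a classical fact: with $C=\Sigma_1^{1/2}O\,\Sigma_2^{1/2}$ for a contraction $O$, von Neumann's trace inequality (equivalently, duality of the operator and nuclear norms) gives $\sup_{\norm{O}\leq 1}\Tr\bigl(O\,\Sigma_2^{1/2}\Sigma_1^{1/2}\bigr)=\Tr\bigl[(\Sigma_1^{1/2}\Sigma_2\Sigma_1^{1/2})^{1/2}\bigr]$, which is exactly $\BW$ of \cref{def:bw distance}. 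The Gaussian tightness step is also sound: every block-PSD matrix is the covariance of a jointly Gaussian law whose marginals are the prescribed Gaussians, so the relaxed feasible set coincides with the realizable one and the optimum is attained by an affine (jointly Gaussian) coupling. One remark on the case you flag as delicate: singular $\Sigma_1,\Sigma_2$ need no pseudoinverse or limiting argument, because the contraction parametrization of block-PSD matrices, $C=\Sigma_1^{1/2}O\,\Sigma_2^{1/2}$ with $\norm{O}\leq 1$, is valid for arbitrary positive-semidefinite diagonal blocks; alternatively a perturbation $\Sigma_i+\eps I$ followed by continuity of both sides closes the gap. In short, where the paper buys brevity by citation, your argument buys self-containedness at the cost of one nontrivial matrix-analytic lemma, and both routes are sound.
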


\begin{lemma}[{Causal MMSE of Gaussian \cite{kailath_linear_2000}}] \label{lem:gaussian estimation}
Suppose the disturbances are distributed as Gaussian, \ie, $\bm{\xi}_T \sampled \normal(\bm{\mu}_T, \bm{\Sigma}_{T})$ with mean $\bm{\mu}_T \in \Xi_T$ and covariance $\bm{\Sigma}_{T}$. Consider causal mean-square estimation of $\s_T$ from $\y_t$, \ie, 
    \begin{equation}
        \inf_{\pi_T \in \Pi_T} \E \br{  \norm{\ee_T(\bm{\xi}_T,\pi_T)}^2}.
    \end{equation}
    Then, there exists a causal (block lower-diagonal) matrix $\K_T^\star$ and a vector $\mathbf{b}_T^\star$, such that the optimal causal estimator $\pi_T^\star:\y_T \mapsto \hats_T$ is affine with the following form:
    \begin{equation}
        \hats_T = \K_T^\star \y_t + \mathbf{b}_T^\star.
    \end{equation}
\end{lemma}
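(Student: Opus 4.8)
The plan is to exploit the joint Gaussianity of the target signal and the measurements, combined with the separability of the quadratic cost across time, to reduce the constrained problem to a sequence of unconstrained conditional-mean estimation problems, each of which is known to admit an affine solution in the Gaussian case.

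First I would establish that the stacked vector $[\s_T;\y_T]$ is jointly Gaussian. Since the finite-horizon model \eqref{eq: finite horizon model} expresses both $\y_T = \HH_T \w_T + \vv_T$ and $\s_T = \L_T \w_T$ as linear images of the disturbance $\bm{\xi}_T = [\w_T;\vv_T]$, and $\bm{\xi}_T \sampled \normal(\bm{\mu}_T,\bm{\Sigma}_T)$, the vector $[\s_T;\y_T]$ is an affine image of a Gaussian, hence Gaussian, with mean and covariance obtained directly from $(\HH_T,\L_T,\bm{\mu}_T,\bm{\Sigma}_T)$. Next I would decompose the objective: writing $\ee_T = \hats_T - \s_T$ componentwise gives $\E[\norm{\ee_T}^2] = \sum_{t=0}^{T-1}\E[\norm{\widehat{s}_t - s_t}^2]$. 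Under the causality constraint, $\widehat{s}_t$ is an arbitrary measurable function of $(y_0,\dots,y_t)$ only, and it appears in exactly one summand. The terms therefore decouple and can be minimized independently, so that the minimizer of the $t$-th term over all causal maps is the conditional mean $\widehat{s}_t^\star = \E[s_t \mid y_0,\dots,y_t]$, by the orthogonality (projection) characterization of minimum mean-square estimation.

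Then I would invoke the Gaussian conditional-mean formula. For the jointly Gaussian pair $(s_t,\,y_{0:t})$ with $y_{0:t}\defeq(y_0,\dots,y_t)$, the conditional expectation is affine,
\[
    \E[s_t \mid y_{0:t}] = \E[s_t] + \Cov(s_t,y_{0:t})\,\Cov(y_{0:t})^{\dagger}\,(y_{0:t} - \E[y_{0:t}]),
\]
where the pseudoinverse accommodates a possibly singular measurement covariance. Since this is affine in the past-and-present measurements, stacking over $t = 0,\dots,T\-1$ yields $\hats_T = \K_T^\star \y_T + \mathbf{b}_T^\star$, in which $\mathbf{b}_T^\star$ collects the constant offsets and the gain $\K_T^\star$ is block lower-triangular, i.e. $\K_T^\star \in \causal_T$, because the affine coefficient of $\widehat{s}_t$ on $y_\tau$ vanishes for $\tau > t$.

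The main subtlety I anticipate is bookkeeping rather than conceptual: one must handle the possibly rank-deficient $\Cov(y_{0:t})$ so that the pseudoinverse expression is a genuine conditional mean, and one must track the block lower-triangular structure carefully through the stacking, which follows precisely from the fact that each $\widehat{s}_t^\star$ depends only on $y_{0:t}$. The affine form of the Gaussian conditional mean and the time-wise decoupling are standard \cite{kailath_linear_2000}; the only point requiring care is verifying that, in the zero-mean case $\bm{\mu}_T = 0$ assumed elsewhere, the offset $\mathbf{b}_T^\star$ vanishes, recovering the purely linear estimator $\hats_T = \K_T^\star \y_T$ used in the sequel.
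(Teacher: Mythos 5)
Your proposal is correct and complete: the paper itself gives no proof of this lemma, citing it as a standard result from \cite{kailath_linear_2000}, and your argument (joint Gaussianity of $[\s_T;\y_T]$, time-wise decoupling of the quadratic cost over the product class of causal maps, and the affine Gaussian conditional-mean formula with a pseudoinverse for possibly singular $\Cov(y_{0:t})$) is precisely the standard argument that citation stands for. The block lower-triangular structure of $\K_T^\star$ and the vanishing of $\mathbf{b}_T^\star$ in the zero-mean case follow exactly as you describe, so nothing is missing.
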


\paragraph{\textit{Proof of \cref{thm:minimax}:} }
Clearly, we have the following weak duality,
\begin{equation}\label{eq: weak duality}
    \sup_{\Pr_T \in \W_T(\Pr^\circ_T, \rho_T)} \inf_{\pi_T \in \Pi_T} \E_{\Pr_T} \br{  \norm{\ee_T(\bm{\xi}_T,\pi_T)}^2} \leq  \inf_{\pi_T \in \Pi_T} \sup_{\Pr_T \in \W_T(\Pr^\circ_T, \rho_T)} \E_{\Pr_T} \br{  \norm{\ee_T(\bm{\xi}_T,\pi_T)}^2}.
\end{equation}
Let $\bm{\Sigma}_T^\circ \psdg 0$ be the covariance of the nominal distribution $\Pr_T^\circ$. We start by bounding the lhs of \eqref{eq: weak duality} as follows:
\begin{align}
     \sup_{\Pr_T \in \W_T(\Pr^\circ_T, \rho_T)} \inf_{\pi_T \in \Pi_T} \E_{\Pr_T}& \br{  \norm{\ee_T(\bm{\xi}_T,\pi_T)}^2} \stackrel{(a)}{\leq} \sup_{\mathsf{G}(\Pr_T, \Pr_T^\circ) \leq \rho_T} \inf_{\pi_T \in \Pi_T} \E_{\Pr_T} \br{  \norm{\ee_T(\bm{\xi}_T,\pi_T)}^2}, \\
     &\stackrel{(b)}{\leq} \sup_{\mathsf{G}(\Pr_T, \Pr_T^\circ) \leq \rho_T} \inf_{\K_T \in \causal_T} \E_{\Pr_T} \br{  \norm{\ee_T(\bm{\xi}_T,\K_T)}^2}, \\
     &= \sup_{\mathsf{G}(\Pr_T, \Pr_T^\circ) \leq \rho_T} \inf_{\K_T \in \causal_T} \E_{\Pr_T} \br{  \bm{\xi}_T^\ast \T_{\K_T}^\ast  \T_{\K_T}\bm{\xi}_T },\\
     &\stackrel{(c)}{=} \sup_{\mathsf{G}(\Pr_T, \Pr_T^\circ) \leq \rho_T} \inf_{\K_T \in \causal_T}   \Tr\pr{ \T_{\K_T}^\ast  \T_{\K_T}  \E_{\Pr_T} \br{  \bm{\xi}_T \bm{\xi}_T^\ast}}, \\
     &\stackrel{(d)}{=} \sup_{\BW(\bm{\Sigma}_T,\bm{\Sigma}_T^\circ) \leq \rho_T} \inf_{\K_T \in \causal_T}   \Tr\pr{ \T_{\K_T}^\ast  \T_{\K_T}  \bm{\Sigma}_T},\label{eq: first bound on the minmax}
\end{align}
where $(a)$ follows from the Gelbrich bound (\cref{lem:gelbrich bound}), $(b)$ follows from $\causal_T \subset \Pi_T$, $(c)$ follows from linearity of cyclic property of trace and the linearity of trace and the expectation, $(d)$ follows from the definition of the Gelbrich distance (\cref{def:gelbrich}). Note that, we can in general take the distributions involved to be zero-mean since any non-zero mean can be incorporated as an additive constant to the estimator, canceling the mean. Therefore, without loss of generality, we can restrict ourselves to zero-mean disturbances and linear estimators (instead of affine). 

Following a similar reasoning, we obtain the following upper bound on the rhs of \eqref{eq: weak duality},
\begin{align}
    \inf_{\pi_T \in \Pi_T}  \sup_{\Pr_T \in \W_T(\Pr^\circ_T, \rho_T)} \E_{\Pr_T} \br{  \norm{\ee_T(\bm{\xi}_T,\pi_T)}^2} &\leq  \inf_{\K_T \in \causal_T} \sup_{\BW(\bm{\Sigma}_T,\bm{\Sigma}_T^\circ) \leq \rho_T}  \Tr\pr{ \T_{\K_T}^\ast  \T_{\K_T}  \bm{\Sigma}_T}.\label{eq: minmax bw }
\end{align}
Notice that the objective in the right-hand side of \eqref{eq: minmax bw } is affine in $\bm{\Sigma}_T$ (hence concave) and quadratic in $\K_T$ (hence strictly convex whenever $\bm{\Sigma}_T \psdg 0$). Furthermore, the constraint set $\causal_T$ is affine, and the constraint $\BW(\bm\Sigma_T,\bm\Sigma_T^\circ)$ is convex \cite{bhatia_bures-wasserstein_2017}. Therefore, we have the following minimax duality.
\begin{equation}\label{eq: minimax with bw dist}
    \inf_{\K_T \in \causal_T} \sup_{\BW(\bm{\Sigma}_T,\bm{\Sigma}_T^\circ) \leq \rho_T}  \Tr\pr{ \T_{\K_T}^\ast  \T_{\K_T}  \bm{\Sigma}_T} =  \sup_{\BW(\bm{\Sigma}_T,\bm{\Sigma}_T^\circ) \leq \rho_T} \inf_{\K_T \in \causal_T}   \Tr\pr{ \T_{\K_T}^\ast  \T_{\K_T}  \bm{\Sigma}_T}.
\end{equation}
We denote the saddle point of \eqref{eq: minimax with bw dist} by $(\K_T^\star, \bm\Sigma_T^\star)$. Notice that, when the nominal distribution is Gaussian $\Pr_T^\circ \defeq \normal(0,\bm{\Sigma}_T^\circ)$, the Gaussian distribution $\Pr_{T}^\star \defeq \normal(0, \bm\Sigma_T^\star)$ and the causal estimator $\pi_T^\star \defeq \K_T^\star$ achieve the upper bound \eqref{eq: first bound on the minmax} with equality. Thus, from \eqref{eq: weak duality} and \eqref{eq: minmax bw }, we obtain the desired result.
\qed

\subsection{Proof of \texorpdfstring{\cref{lem:finite horizon non-causal}}{Lemma 3.3}}\label{app:proof of non-causal finite horizon}
Before we proceed with the proof, we state the following result, which is the backbone for both \cref{thm:finite horizon sdp} and \cref{thm:dual formulation}.

\begin{theorem}[Strong Duality in the Finite Horizon]\label{thm: strong duality finite horizon}
     Let the horizon $T\>0$ be fixed and $\K_T$ be a given estimator, which can be non-causal in general. Under the \cref{asmp: nominal disturbance}, the finite-horizon worst-case MSE \eqref{eq:finite horizon worst case MSE} suffered by $\K_T$, \ie,
    \begin{equation}\label{eq: alternative finite horizon worst case MSE}
        \wMSE_T(\K_T, \rho_T)\; = \sup_{\Pr_T \in \W_T(\Pr_{T}^\circ,\rho_T)}  \E_{\Pr_T} \br{  \bm{\xi}_T^\ast \T_{\K_T}^\ast \T_{\K_T} \bm{\xi}_T },
    \end{equation}
    attains a finite value and is equivalent to the following dual problem:
    \begin{equation}\label{eq: stong duality finite horizon}
     \wMSE_T(\K_T, \rho_T)\; = \inf_{\substack{\gamma \geq 0 } }  \gamma \rho_T^2 +  \gamma\Tr\br{ ( \I_T - \gamma^{\-1}\T_{\K_T} \T_{\K_T}^\ast )^\inv - \I_T } \quad  \textrm{s.t.} \quad  \gamma \I_T \psdg \T_{\K_T} \T_{\K_T}^\ast.
\end{equation}
Furthermore, the worst-case disturbance, $ \bm{\xi}_T^\star$, can be identified from the nominal disturbance, $ \bm{\xi}_T^\circ$, as 
\begin{equation}\label{eq:worst case distrubance finite horizon}
    \bm{\xi}_T^\star = ( \I_T - \gamma_\star^\inv \T_{\K_T}^\ast \T_{\K_T})^{\-1}  \bm{\xi}_T^{\circ}, 
\end{equation}
where $\gamma_\star$ is the optimal solution of \eqref{eq: stong duality finite horizon} and solves the following equation uniquely:
\begin{equation}\label{eq:worst gamma strong duality finite horizon}
        \Tr\br{( (\I_T - \gamma_\star^\inv \T_{\K_T} \T_{\K_T}^\ast )^\inv - \I_T)^2} = \rho_T^2.
\end{equation}  
\end{theorem}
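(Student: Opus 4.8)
The plan is to first collapse the infinite-dimensional supremum over distributions into a finite-dimensional optimization over covariance matrices, and then dualize. Since the error is linear in the disturbance, $\ee_T=\T_{\K_T}\bm{\xi}_T$, the objective depends on $\Pr_T$ only through its second moment: $\E_{\Pr_T}[\bm{\xi}_T^\ast\T_{\K_T}^\ast\T_{\K_T}\bm{\xi}_T]=\Tr(\T_{\K_T}^\ast\T_{\K_T}\,\bm{\Sigma}_T)$ with $\bm{\Sigma}_T\defeq\E_{\Pr_T}[\bm{\xi}_T\bm{\xi}_T^\ast]$. I would invoke the Gelbrich bound (\cref{lem:gelbrich bound}) exactly as in the proof of \cref{thm:minimax}: any $\Pr_T$ in the $\Was$-ball has covariance obeying $\BW(\bm{\Sigma}_T,\I_T)\le\Was(\Pr_T,\Pr_T^\circ)\le\rho_T$ (using \cref{asmp: nominal disturbance}, so $\bm{\Sigma}_T^\circ=\I_T$), while conversely any covariance satisfying the Bures--Wasserstein constraint is realized by a Gaussian lying in the ball and attaining the bound. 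After checking that the worst case is attained at zero mean (the marginal value of covariance budget equals the optimal multiplier $\gamma_\star>\lambda_{\max}(\T_{\K_T}^\ast\T_{\K_T})$, which strictly exceeds $\lambda_{\max}$, so allocating any budget to the mean is suboptimal), this yields
\[
\wMSE_T(\K_T,\rho_T)=\sup_{\bm{\Sigma}_T\psdgeq 0}\ \Tr(\T_{\K_T}^\ast\T_{\K_T}\,\bm{\Sigma}_T)\quad\text{s.t.}\quad \Tr\big[(\sqrt{\bm{\Sigma}_T}-\I_T)^2\big]\le\rho_T^2,
\]
where I used $\BW(\bm{\Sigma}_T,\I_T)^2=\Tr[\bm{\Sigma}_T-2\sqrt{\bm{\Sigma}_T}+\I_T]=\Tr[(\sqrt{\bm{\Sigma}_T}-\I_T)^2]$. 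Compactness of the Bures--Wasserstein ball together with continuity of the linear objective gives finiteness and attainment immediately.

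Next I would dualize this concave maximization (a linear objective over the convex set cut out by the jointly convex map $\bm{\Sigma}_T\mapsto\Tr[(\sqrt{\bm{\Sigma}_T}-\I_T)^2]$). Slater's condition holds since $\bm{\Sigma}_T=\I_T$ is strictly feasible, so strong Lagrangian duality applies with a multiplier $\gamma\ge0$:
\[
\wMSE_T(\K_T,\rho_T)=\inf_{\gamma\ge0}\ \gamma\rho_T^2+\sup_{X\psdgeq 0}\Big(\Tr(\T_{\K_T}^\ast\T_{\K_T}X^2)-\gamma\Tr[(X-\I_T)^2]\Big),
\]
after the substitution $X=\sqrt{\bm{\Sigma}_T}\psdgeq 0$. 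Writing $N\defeq\T_{\K_T}^\ast\T_{\K_T}$, the inner objective is $\Tr((N-\gamma\I_T)X^2)+2\gamma\Tr(X)-\gamma\Tr(\I_T)$, which is bounded above in $X$ iff $\gamma\I_T\psdg N$; its first-order condition is the Sylvester equation $(N-\gamma\I_T)X+X(N-\gamma\I_T)=-2\gamma\I_T$. Because the right-hand side is a multiple of the identity and $N-\gamma\I_T\psdl 0$, this has the unique, commuting, positive-definite solution $X_\star=(\I_T-\gamma^\inv N)^\inv$.

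Substituting $X_\star$ back and simplifying with $Y\defeq\I_T-\gamma^\inv N$ (so $N=\gamma(\I_T-Y)$) collapses the inner supremum to $\gamma\,\Tr[Y^\inv-\I_T]$, giving the dual objective $g(\gamma)=\gamma\rho_T^2+\gamma\Tr[(\I_T-\gamma^\inv\T_{\K_T}^\ast\T_{\K_T})^\inv-\I_T]$ subject to $\gamma\I_T\psdg\T_{\K_T}^\ast\T_{\K_T}$. To match the stated form I would replace $\T_{\K_T}^\ast\T_{\K_T}$ by $\T_{\K_T}\T_{\K_T}^\ast$: for the scalar function $f(t)=(1-\gamma^\inv t)^\inv-1$ one has $f(0)=0$, and since $\T_{\K_T}^\ast\T_{\K_T}$ and $\T_{\K_T}\T_{\K_T}^\ast$ share the same nonzero spectrum, $\Tr[f(\T_{\K_T}^\ast\T_{\K_T})]=\Tr[f(\T_{\K_T}\T_{\K_T}^\ast)]$ despite the differing ambient dimensions (the extra zero eigenvalues contribute $f(0)=0$), and the two constraints are equivalent as the operators have equal largest eigenvalue. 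Finally, the optimal covariance is $\bm{\Sigma}_T^\star=X_\star^2=(\I_T-\gamma_\star^\inv\T_{\K_T}^\ast\T_{\K_T})^{-2}$, realized by the linear (optimal-transport) pushforward $\bm{\xi}_T^\star=(\I_T-\gamma_\star^\inv\T_{\K_T}^\ast\T_{\K_T})^\inv\bm{\xi}_T^\circ$ of the nominal, and complementary slackness forces the constraint to bind, i.e. $\Tr[((\I_T-\gamma_\star^\inv\T_{\K_T}\T_{\K_T}^\ast)^\inv-\I_T)^2]=\rho_T^2$ (again using $f(0)=0$ on the quadratic integrand); uniqueness of $\gamma_\star$ follows since this left-hand side is continuous and strictly decreasing in $\gamma$ on $(\lambda_{\max},\infty)$, ranging from $+\infty$ down to $0$.

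The main obstacle I anticipate is the reduction step rather than the algebra: carefully justifying that the distributional supremum equals the Bures--Wasserstein covariance program — in particular that a Gaussian attains the worst case and that the optimal mean is zero — since the Gelbrich bound only supplies an inequality that is tight for Gaussians. The remaining delicate point is the dimensional bookkeeping between $\T_{\K_T}^\ast\T_{\K_T}$ (acting on the disturbance space) and $\T_{\K_T}\T_{\K_T}^\ast$ (acting on the error space), which is reconciled cleanly only because every trace expression appearing in the dual is built from a function vanishing at $0$.
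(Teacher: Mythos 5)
Your proof is correct in substance, but it takes a genuinely different route from the paper. The paper does not re-derive the strong duality at all: it invokes an external result (Theorems 2 and 3 of the cited work \cite{DRORO}, with the matrix $C$ there replaced by $\T_{\K_T}^\ast\T_{\K_T}$) to obtain the dual in terms of $\T_{\K_T}^\ast\T_{\K_T}$, and then its entire technical content is the conversion to the $\T_{\K_T}\T_{\K_T}^\ast$ form, carried out via the matrix-inversion (Woodbury) identity, the cyclic property of the trace, and a Neumann-series expansion. You instead give a self-contained derivation: reduction of the distributional supremum to a Bures--Wasserstein covariance program, Lagrangian duality with Slater's condition, explicit solution of the inner maximization through the Sylvester equation (your $X_\star=(\I_T-\gamma^\inv\T_{\K_T}^\ast\T_{\K_T})^\inv$ and the collapse to $\gamma\Tr[Y^\inv-\I_T]$ check out), and a spectral-mapping argument for the $\T_{\K_T}^\ast\T_{\K_T}\leftrightarrow\T_{\K_T}\T_{\K_T}^\ast$ exchange (equal nonzero spectra plus $f(0)=0$). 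Your conversion step is cleaner and shorter than the paper's Woodbury-plus-Neumann computation; what the paper's route buys is brevity on the page, at the cost of opacity about where \cref{asmp: nominal disturbance} and zero-mean optimality actually enter.

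One step needs repair as written. In the reduction you claim that ``any covariance satisfying the Bures--Wasserstein constraint is realized by a Gaussian lying in the ball.'' This uses tightness of the Gelbrich bound, which requires \emph{both} distributions to be Gaussian; but \cref{thm: strong duality finite horizon} assumes only \cref{asmp: nominal disturbance} (identity second moment), not Gaussianity of $\Pr_T^\circ$. For a non-Gaussian nominal, $\Was(\normal(0,\bm{\Sigma}_T),\Pr_T^\circ)$ can strictly exceed $\BW(\bm{\Sigma}_T,\I_T)$, so the Gaussian witness may fall outside the ambiguity set and the claimed equality of the two suprema is not yet established at that point in your argument. The fix is already in your own last paragraph: the Gelbrich inequality gives the upper bound
\begin{equation*}
\sup_{\Pr_T\in\W_T(\Pr_T^\circ,\rho_T)}\E_{\Pr_T}\br{\bm{\xi}_T^\ast\T_{\K_T}^\ast\T_{\K_T}\bm{\xi}_T}\;\leq\;\sup_{\BW(\bm{\Sigma}_T,\I_T)\leq\rho_T}\Tr\pr{\T_{\K_T}^\ast\T_{\K_T}\,\bm{\Sigma}_T},
\end{equation*}
and the matching lower bound should come not from Gaussians but from the linear pushforward $\bm{\xi}_T^\star=(\I_T-\gamma_\star^\inv\T_{\K_T}^\ast\T_{\K_T})^\inv\bm{\xi}_T^\circ$ of the \emph{nominal} distribution: its second moment is exactly the optimizer $\bm{\Sigma}_T^\star$ of the covariance program, and the coupling $(\bm{\xi}_T^\circ,\bm{\xi}_T^\star)$ certifies $\Was(\Pr_T^\star,\Pr_T^\circ)^2\leq\Tr\br{((\I_T-\gamma_\star^\inv\T_{\K_T}^\ast\T_{\K_T})^\inv-\I_T)^2}=\rho_T^2$ by complementary slackness. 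Reordering your argument so that the pushforward closes the chain of inequalities (rather than appearing only as a final remark) makes the proof valid under the stated assumption and also explains why the worst-case disturbance in \eqref{eq:worst case distrubance finite horizon} is generally non-Gaussian.
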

\begin{proof}
    The proof follows closely from \cite[Thm. 2 \& 3]{DRORO} (and also from \cite[Thm. IV.1]{hajar_wasserstein_2023}) by replacing the matrix $C$ in Thm.2 of \cite{DRORO} with $\T_{\K_T}^\ast \T_{\K_T}$. In that case, we get that 
    \begin{equation}\label{eq: stong dual without interchanging T_K}
     \wMSE_T(\K_T, \rho_T)\; = \inf_{\substack{\gamma \geq 0 } }  \gamma \rho_T^2 +  \gamma\Tr\br{ ( \I_T - \gamma^{\-1}\T_{\K_T}^\ast \T_{\K_T} )^\inv - \I_T } \quad  \textrm{s.t.} \quad  \gamma \I_T \psdg \T_{\K_T}^\ast \T_{\K_T},
\end{equation}
and the characterization of the optimal solution $\gamma_\star\geq 0$ as
\begin{equation}\label{eq:worst_gamma  without interchanging T_K}
        \Tr\br{( (\I_T - \gamma_\star^\inv \T_{\K_T}^\ast \T_{\K_T} )^\inv - \I_T)^2} = \rho_T^2.
\end{equation}  
Notice that \eqref{eq: stong dual without interchanging T_K} and \eqref{eq:worst_gamma  without interchanging T_K}  involve the term $ \T_{\K_T}^\ast \T_{\K_T}$ whereas the desired formulations in \eqref{eq: stong duality finite horizon} and \eqref{eq:worst gamma strong duality finite horizon}  involve the term $\T_{\K_T} \T_{\K_T}^\ast $. To obtain the desired formulations, we appeal to matrix inversion identity, \ie,
\begin{align}
    ( \I_T - \gamma^{\-1}\T_{\K_T}^\ast \T_{\K_T} )^\inv &= \I_T + \T_{\K_T}^\ast ( \gamma \I_T - \T_{\K_T} \T_{\K_T}^\ast )^\inv \T_{\K_T},\\
    &= \I_T + \gamma^\inv \T_{\K_T}^\ast (  \I_T - \gamma^\inv \T_{\K_T} \T_{\K_T}^\ast )^\inv \T_{\K_T}, \label{eq: matrix inversion lemma}
\end{align}
where the exact block dimensions of the identity operator $\I_T$ differ depending on where they appear and should be inferred from the context. We can evaluate the trace in \eqref{eq: stong dual without interchanging T_K} involving $ \T_{\K_T}^\ast \T_{\K_T}$ as
\begin{align}
    \Tr\br{ ( \I_T - \gamma^{\-1}\T_{\K_T}^\ast \T_{\K_T} )^\inv - \I_T } &=  \Tr\br{ \I_T + \gamma^\inv \T_{\K_T}^\ast (  \I_T - \gamma^\inv \T_{\K_T} \T_{\K_T}^\ast )^\inv \T_{\K_T} - \I_T }, \label{eq: interchange TK 1} \\
    &= \Tr\br{ \gamma^\inv \T_{\K_T}^\ast (  \I_T - \gamma^\inv \T_{\K_T} \T_{\K_T}^\ast )^\inv \T_{\K_T} },  \label{eq: interchange TK 2} \\
    &= \Tr\br{ (  \I_T - \gamma^\inv \T_{\K_T} \T_{\K_T}^\ast )^\inv  \gamma^\inv \T_{\K_T}   \T_{\K_T}^\ast}, \label{eq: interchange TK 3}
\end{align}
where \eqref{eq: interchange TK 1} is by \eqref{eq: matrix inversion lemma}, and 
\eqref{eq: interchange TK 3} is by the cyclic property of trace. Noting that the condition $\gamma \I_T \psdg \T_{\K_T}^\ast \T_{\K_T}$ is equivalent to $\gamma \I_T \psdg \T_{\K_T} \T_{\K_T}^\ast$, we expand $(  \I_T - \gamma^\inv \T_{\K_T} \T_{\K_T}^\ast )^\inv$ in \eqref{eq: interchange TK 3} by the following Neumann series:
\begin{equation}\label{eq:neuman series}
    (  \I_T - \gamma^\inv \T_{\K_T} \T_{\K_T}^\ast )^\inv = \sum_{k=0}^{\infty} \pr{\gamma^\inv \T_{\K_T} \T_{\K_T}^\ast}^k.
\end{equation}
Thus, the expression in \eqref{eq: interchange TK 3} can be written equivalently as
\begin{align}
    \Tr\br{ (  \I_T - \gamma^\inv \T_{\K_T} \T_{\K_T}^\ast )^\inv  \gamma^\inv \T_{\K_T}   \T_{\K_T}^\ast} &= \Tr\br{ \sum_{k=0}^{\infty} \pr{\gamma^\inv \T_{\K_T} \T_{\K_T}^\ast}^{k+1} }, \\
    &= \Tr\br{ \sum_{k=1}^{\infty} \pr{\gamma^\inv \T_{\K_T} \T_{\K_T}^\ast}^{k} },\\
    &= \Tr\br{ (  \I_T - \gamma^\inv \T_{\K_T} \T_{\K_T}^\ast )^\inv - \I_T },
\end{align}
giving the desired expression in \eqref{eq: stong duality finite horizon}. The desired expression in \eqref{eq:worst gamma strong duality finite horizon} can be obtained easily following similar algebraic manipulations.
\end{proof}

\paragraph{\textit{Proof of \cref{lem:finite horizon non-causal}:}}
Let $\K_T$ be a given estimator, which can be non-causal. We have that
\begin{align}
    \T_{\K_T} \T_{\K_T}^\ast &= \begin{bmatrix}  \K_T \HH_T - \L_T  & \K_T \end{bmatrix} \begin{bmatrix}  \HH_T^\ast \K_T^\ast  - \L_T^\ast  \\ \K_T^\ast \end{bmatrix},\\
    &= (\K_T \HH_T - \L_T) (\K_T \HH_T - \L_T)^\ast + \K_T \K_T^\ast, \\
    &= \K_T(\I_T + \HH_T \HH_T^\ast) \K_T^\ast - \K_T \HH_T \L_T^\ast  -   \L_T\HH_T^\ast \K_T^\ast + \L_T \L_T^\ast, \\
    &\stackrel{(a)}{=} (\K_T- \K_T^\circ)(\I_T + \HH_T \HH_T^\ast)(\K_T- \K_T^\circ)^\ast + \L_T \L_T^\ast -  \K_T^\circ (\I_T + \HH_T \HH_T^\ast) (\K_T^\circ)^\ast,
\end{align}
where $\K_T^\circ \defeq \L_T\HH_T^\ast (\I_T + \HH_T \HH_T^\ast)^\inv$ and $(a)$ is obtained from completion of squares.

Moreover, observe that
\begin{align}
     \T_{\K_T^\circ} \T_{\K_T^\circ}^\ast &=  \L_T \L_T^\ast -  \K_T^\circ (\I_T + \HH_T \HH_T^\ast) (\K_T^\circ)^\ast, \\
    &= \L_T \L_T^\ast -  \L_T\HH_T^\ast (\I_T + \HH_T \HH_T^\ast)^\inv \HH_T \L_T^\ast,\\
    &= \L_T (\I_T - \HH_T^\ast (\I_T + \HH_T \HH_T^\ast)^\inv \HH_T )\L_T^\ast, \\
    &\stackrel{(b)}{=} \L_T (\I_T + \HH_T^\ast \HH_T)^\inv\L_T^\ast,
\end{align}
where $(b)$ follows from matrix inversion identity.

Thus, we have that
\begin{equation}\label{eq:finite horizon TK in terms of non-causal}
     \T_{\K_T} \T_{\K_T}^\ast = (\K_T- \K_T^\circ)(\I_T + \HH_T \HH_T^\ast)(\K_T- \K_T^\circ)^\ast + \T_{\K_T^\circ} \T_{\K_T^\circ}^\ast \psdgeq \T_{\K_T^\circ} \T_{\K_T^\circ}^\ast
\end{equation}

Now, consider \cref{prob:finite horizon drf} without the causality constraint on the estimator. Using the strong duality result in \cref{thm: strong duality finite horizon}, we can express \cref{eq:finite horizon drf} equivalently as
\begin{equation}
    \inf_{ \gamma \geq 0 } \inf_{\K_T}  \gamma \rho_T^2 +  \gamma\Tr\br{ ( \I_T - \gamma^{\-1}\T_{\K_T} \T_{\K_T}^\ast )^\inv - \I_T } \quad  \textrm{s.t.} \quad  \gamma \I_T \psdg \T_{\K_T} \T_{\K_T}^\ast.
\end{equation}
Fixing $\gamma\geq 0$, we focus on the subproblem
\begin{equation}\label{eq: subproblem for noncaousal finite horizon}
    \inf_{\K_T} \gamma \Tr\br{ ( \I_T - \gamma^{\-1}\T_{\K_T} \T_{\K_T}^\ast )^\inv } \quad  \textrm{s.t.} \quad  \gamma \I_T \psdg \T_{\K_T} \T_{\K_T}^\ast.
\end{equation}
Using the identity in \eqref{eq:finite horizon TK in terms of non-causal}, we can rewrite \eqref{eq: subproblem for noncaousal finite horizon} in terms $\K_T^\circ$ as follows
\begin{equation*}
    \inf_{\K_T} \gamma^2 \Tr\br{ ( \gamma \I_T - (\K_T- \K_T^\circ)(\I_T + \HH_T \HH_T^\ast)(\K_T- \K_T^\circ)^\ast - \T_{\K_T^\circ} \T_{\K_T^\circ}^\ast)^\inv } \; \textrm{s.t.} \; \gamma \I_T \psdg \T_{\K_T} \T_{\K_T}^\ast.
\end{equation*}
Since the mapping $\clf{X} \mapsto (\I_T - \clf{X})^\inv$ is operator monotone, the minimum over $\K_T$ is attained by $\K_T^\circ$ and the optimal value is given by the following optimization:
\begin{equation}
     \inf_{ \gamma \geq 0 }  \gamma \rho_T^2 +  \gamma\Tr\br{ ( \I_T - \gamma^{\-1}\T_{\K_T^\circ} \T_{\K_T^\circ}^\ast )^\inv - \I_T } \quad  \textrm{s.t.} \quad  \gamma \I_T \psdg \T_{\K_T^\circ} \T_{\K_T^\circ}^\ast.
\end{equation}
\qed

\subsection{Proof of \texorpdfstring{\cref{thm:finite horizon sdp}}{Theorem 3.4}}

\begin{proof}
Using the strong duality result in \cref{thm: strong duality finite horizon}, we can express \cref{eq:finite horizon drf} equivalently as
\begin{equation}
    \inf_{\substack{ \gamma \geq 0 \\ \K_T\in \causal}}  \gamma( \rho_T^2 - \Tr(\I_T))  +  \gamma^2 \Tr\br{ ( \gamma \I_T - \T_{\K_T} \T_{\K_T}^\ast )^\inv } \quad  \textrm{s.t.} \quad  \gamma \I_T \psdg \T_{\K_T} \T_{\K_T}^\ast.
\end{equation}
Notice that we can express the rhs as
\begin{equation}
    \gamma^2 \Tr\br{ ( \gamma \I_T - \T_{\K_T} \T_{\K_T}^\ast )^\inv } = \inf_{\clf{X}_T \psdg 0} \Tr(\clf{X}_T) \;\subjto\; \clf{X}_T \psdgeq \gamma^2( \gamma \I_T - \T_{\K_T} \T_{\K_T}^\ast )^\inv.
\end{equation}
Using the Schur complement, we can rewrite the constraint $\clf{X}_T \psdgeq \gamma^2( \gamma \I_T - \T_{\K_T} \T_{\K_T}^\ast )^\inv$ as 
\begin{equation}\label{eq:matrix inequality for sdp}
    \begin{bmatrix}
        \clf{X}_T & \gamma \I_T \\
        \gamma \I_T & \gamma \I_T - \T_{\K_T} \T_{\K_T}^\ast
    \end{bmatrix}\psdgeq 0,
\end{equation}
where we used the fact that $\gamma \I_T \psdg \T_{\K_T} \T_{\K_T}^\ast$. Using the identity in \eqref{eq:finite horizon TK in terms of non-causal}, we can rewrite the matrix inequality \eqref{eq:matrix inequality for sdp} as
\begin{equation}
    \begin{bmatrix}
        \clf{X}_T & \gamma \I_T \\
        \gamma \I_T & \gamma \I_T - \T_{\K_T^\circ} \T_{\K_T^\circ}^\ast
    \end{bmatrix} -\begin{bmatrix}
        0 \\ (\K_T - \K_T^\circ) 
    \end{bmatrix} (\I_T + \HH_T \HH_T^\ast ) \begin{bmatrix}
        0 & (\K_T - \K_T^\circ)^\ast
    \end{bmatrix} \psdgeq 0.
\end{equation}
As $(\I_T + \HH_T \HH_T^\ast )\psdg 0$, by Schur complement theorem, we can reformulate the matrix inequality above as
\begin{equation}
    \begin{bmatrix}
         \mathcal{X}_T \!&\! \gamma \I_T \!&\! 0 \\
         \gamma \I_T  \!&\! \gamma \I_T \- \T_{\K_T^\circ}\T_{\K_T^\circ}^\ast  \!&\!  \K_T \- \K_T^\circ \\
         0 \!&\! (\K_T\-\K_T^\circ)^\ast \!&\! (\I_T\+\HH_T\HH_T^\ast)^\inv
        \end{bmatrix} \psdgeq 0.
\end{equation}
\end{proof}

\section{Proofs of Theorems Related to Infinite-Horizon Filtering}\label{app: infinite horizon proofs}

\subsection{Proof of \texorpdfstring{\cref{lem:infinite horizon non-causal}}{Lemma 3.7}}

\begin{theorem}[Strong Duality in the Infinite-Horizon]\label{thm: strong duality infinite horizon}
    Let $\K$ be a linear and time-invariant estimator (which can be non-causal in general) with bounded $\Hinf$ norm. Under the Assumptions~\ref{asmp:infinite horizon assumptions} and~\ref{asmp: nominal disturbance}, the infinite-horizon worst-case MSE \eqref{eq:infinite horizon worst case MSE} suffered by $\K$, \ie,
    \begin{equation}\label{eq: alternative infinite horizon worst case MSE}
        \overline{\wMSE}(\K,\rho) \,=\limsup_{T\to \infty} \frac{1}{T} \sup_{\Pr_T \in \W_T(\Pr_{T}^\circ,\rho_T)}  \E_{\Pr_T} \br{  \norm{\ee_T(\bm{\xi}_T,\K)}^2},
    \end{equation}
    attains a finite value and is equivalent to the following dual problem:
    \begin{equation}\label{eq: stong duality infinite horizon}
      \overline{\wMSE}(\K,\rho) \; = \inf_{\substack{\gamma \geq 0 } }  \gamma \rho^2 +  \gamma\Tr\br{ ( \I - \gamma^\inv \T_{\K} \T_{\K}^\ast )^\inv - \I } \quad  \textrm{s.t.} \quad  \gamma \I \psdg \T_{\K} \T_{\K}^\ast.
\end{equation}
Furthermore, the worst-case disturbance, $ \bm{\xi}_\star$, can be identified from the nominal disturbance, $ \bm{\xi}_\circ$, as 
\begin{equation}\label{eq:worst case distrubance infinite horizon}
    \bm{\xi}_\star = ( \I - \gamma_\star^\inv \T_{\K}^\ast \T_{\K})^{\-1}  \bm{\xi}_{\circ}, 
\end{equation}
where $\gamma_\star$ is the optimal solution of \eqref{eq: stong duality infinite horizon} and solves the following equation uniquely:
\begin{equation}\label{eq:worst gamma strong duality infinite horizon}
        \Tr\br{( (\I - \gamma_\star^\inv \T_{\K} \T_{\K}^\ast )^\inv - \I)^2} = \rho^2.
\end{equation}  
\end{theorem}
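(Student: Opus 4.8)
The plan is to obtain the infinite-horizon strong duality as the $T\to\infty$ limit of the finite-horizon strong duality already proved in \cref{thm: strong duality finite horizon}, using the asymptotic spectral theory of block-Toeplitz operators to evaluate the limiting normalized traces. First I would apply \cref{thm: strong duality finite horizon} to the restriction of the time-invariant estimator $\K$ to the horizon $T$, giving
\[
\wMSE_T(\K,\rho_T) = \inf_{\gamma\geq 0,\;\gamma\I_T\psdg\T_{\K_T}\T_{\K_T}^\ast}\; \gamma\rho_T^2 + \gamma\Tr\br{(\I_T-\gamma^\inv\T_{\K_T}\T_{\K_T}^\ast)^\inv-\I_T}.
\]
Substituting the scaling $\rho_T^2=\rho^2 T$ from \cref{asmp:infinite horizon assumptions} and dividing by $T$, I define the normalized objective $f_T(\gamma)\defeq \gamma\rho^2 + \tfrac{\gamma}{T}\Tr\br{(\I_T-\gamma^\inv\T_{\K_T}\T_{\K_T}^\ast)^\inv-\I_T}$ (set to $+\infty$ whenever $\gamma\I_T\not\psdg\T_{\K_T}\T_{\K_T}^\ast$), so that $\tfrac1T\wMSE_T(\K,\rho_T)=\inf_\gamma f_T(\gamma)$ and the quantity in \eqref{eq:infinite horizon worst case MSE} is $\limsup_T \inf_\gamma f_T(\gamma)$.

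Second, for each fixed $\gamma>\norm[\op]{\T_\K}^2$, I would establish the pointwise limit $f_T(\gamma)\to f_\infty(\gamma)\defeq \gamma\rho^2+\gamma\Tr\br{(\I-\gamma^\inv\T_\K\T_\K^\ast)^\inv-\I}$, where the infinite-horizon trace is the normalized per-symbol trace $\Tr(\cdot)=\tfrac{1}{2\pi}\int_{-\pi}^{\pi}\tr(\cdot)\,d\omega$ evaluated on $T_K(\ejw)T_K(\ejw)^\ast$. The analytic ingredient is the matrix-valued Szeg\H{o} (Avram--Parter) limit theorem: since $\K$ has bounded $\Hinf$ norm, the finite sections $\T_{\K_T}$ are truncations of the Laurent operator with bounded symbol $T_K(\ejw)$, and the eigenvalues of $\T_{\K_T}\T_{\K_T}^\ast$ are asymptotically distributed as those of $T_K T_K^\ast$. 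Applying the continuous function $x\mapsto(1-\gamma^\inv x)^\inv-1$, which is bounded on the relevant spectrum exactly when $\gamma>\norm[\op]{\T_\K}^2$, yields the claimed convergence of the normalized traces. I would also record that $\norm[\op]{\T_{\K_T}}\nearrow\norm[\op]{\T_\K}$, so the feasible $\gamma$-domains shrink monotonically to $(\norm[\op]{\T_\K}^2,\infty)$.

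Third, to interchange the limit with $\inf_\gamma$, I would exploit convexity. Each $f_T$ is convex in $\gamma$ (being, up to the $\gamma\rho^2$ term, a dual objective of a worst-case MSE, hence a supremum of affine functions), and pointwise convergence of convex functions on the open interval $(\norm[\op]{\T_\K}^2,\infty)$ upgrades to locally uniform convergence. Since $f_\infty(\gamma)\to\infty$ as $\gamma\to\infty$ (via $\gamma\rho^2$) and as $\gamma\downarrow\norm[\op]{\T_\K}^2$ (the trace diverges as $\I-\gamma^\inv\T_\K\T_\K^\ast$ becomes singular), the minimizer lies in the interior and is unique; locally uniform convergence then gives $\limsup_T\inf_\gamma f_T(\gamma)=\inf_\gamma f_\infty(\gamma)$, so the $\limsup$ is in fact a genuine limit and is finite. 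This establishes \eqref{eq: stong duality infinite horizon}.

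Finally, for the worst-case disturbance I would pass the finite-horizon identity $\bm{\xi}_T^\star=(\I_T-\gamma_\star^\inv\T_{\K_T}^\ast\T_{\K_T})^\inv\bm{\xi}_T^\circ$ to the limit: the minimizers $\gamma_\star^{(T)}$ converge to the infinite-horizon $\gamma_\star$ by the convexity argument, and the normalized defining equation $\tfrac1T\Tr\br{((\I_T-\gamma_\star^\inv\T_{\K_T}\T_{\K_T}^\ast)^\inv-\I_T)^2}=\rho^2$ converges by the same Szeg\H{o} argument to \eqref{eq:worst gamma strong duality infinite horizon}, with uniqueness from strict monotonicity of the trace in $\gamma$. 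Because $\gamma_\star>\norm[\op]{\T_\K}^2$, the operator $(\I-\gamma_\star^\inv\T_\K^\ast\T_\K)^\inv$ is bounded on the doubly-infinite sequence space, giving \eqref{eq:worst case distrubance infinite horizon}. The main obstacle I anticipate is the trace-limit step: one must invoke the correct block/matrix-valued Szeg\H{o} theorem and ensure the function $x\mapsto(1-\gamma^\inv x)^\inv$ is applied only where continuous, i.e. uniformly away from its singularity, so the boundary regime $\gamma\downarrow\norm[\op]{\T_\K}^2$ is the delicate part; the interchange of limit and infimum, by contrast, follows cleanly from convexity.
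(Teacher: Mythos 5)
Your proposal is correct and takes essentially the same route as the paper: the paper's own proof simply invokes Theorem 5 of \cite{kargin_wasserstein_2023} (substituting $\T_\K\T_\K^\ast$ for the operator $\CC_\K$ there), and that cited argument is precisely the passage from the finite-horizon strong duality of \cref{thm: strong duality finite horizon} to the infinite-horizon statement via asymptotic spectral (Szeg\H{o}/Avram--Parter) properties of block-Toeplitz truncations under the scaling $\rho_T^2=\rho^2 T$. Your reconstruction---normalized dual objectives $f_T(\gamma)$, pointwise trace convergence for fixed feasible $\gamma$, convexity to upgrade to locally uniform convergence and interchange the limit with the infimum, and the limiting characterization of $\gamma_\star$ and $\bm{\xi}_\star$---supplies exactly the details the paper outsources to that citation, including an honest flag of the genuinely delicate boundary regime $\gamma\downarrow\norm[\op]{\T_\K}^2$.
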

\begin{proof}
    \tk{The proof of this result closely tracks the proof of Thm. 5 in \cite{kargin_wasserstein_2023}. By replacing $\CC_\K$ in the proof of Thm. 5 in \cite{kargin_wasserstein_2023} with $\T_\K \T_{\K}^\ast$.}
\end{proof}

\paragraph{\textit{Proof of \cref{lem:infinite horizon non-causal}}}:
The proof follows closely from the proof of \cref{lem:finite horizon non-causal} in \cref{app:proof of non-causal finite horizon}. Let $\K$ be linear time-invariant estimator, which can be non-causal, with bounded $\Hinf$ norm. We have that
\begin{equation}\label{eq:infinite horizon TK in terms of non-causal}
     \T_{\K} \T_{\K}^\ast = (\K- \K_\circ)(\I + \HH \HH^\ast)(\K- \K_\circ)^\ast + \T_{\K_\circ} \T_{\K_\circ}^\ast \psdgeq \T_{\K_\circ} \T_{\K_\circ}^\ast
\end{equation}
where $\K^\circ \defeq \L\HH^\ast (\I + \HH \HH^\ast)^\inv$ and $ \T_{\K_T^\circ} \T_{\K_T^\circ}^\ast = \L (\I+ \HH^\ast \HH)^\inv\L^\ast$.

Now, consider \cref{prob:infinite horizon drf} without the causality constraint on the estimator. Using the strong duality result in \cref{thm: strong duality infinite horizon}, we can express \cref{eq:infinite horizon drf} equivalently as
\begin{equation}
    \inf_{ \gamma \geq 0 } \inf_{\K}  \gamma \rho^2 +  \gamma\Tr\br{ ( \I - \gamma^\inv \T_{\K} \T_{\K}^\ast )^\inv - \I } \quad  \textrm{s.t.} \quad  \gamma \I \psdg \T_{\K} \T_{\K}^\ast.
\end{equation}
Fixing $\gamma\geq 0$, we focus on the subproblem
\begin{equation}\label{eq: subproblem for noncaousal infinite horizon}
    \inf_{\K} \gamma \Tr\br{ ( \I - \gamma^\inv\T_{\K} \T_{\K}^\ast )^\inv } \quad  \textrm{s.t.} \quad  \gamma \I \psdg \T_{\K} \T_{\K}^\ast.
\end{equation}
Using the identity in \eqref{eq:infinite horizon TK in terms of non-causal}, we can rewrite \eqref{eq: subproblem for noncaousal infinite horizon} in terms $\K_\circ$ as follows
\begin{equation*}
    \inf_{\K} \gamma^2 \Tr\br{ ( \gamma \I - (\K- \K_\circ)(\I + \HH \HH^\ast)(\K- \K_\circ)^\ast - \T_{\K_\circ} \T_{\K_\circ}^\ast)^\inv } \; \textrm{s.t.} \; \gamma \I \psdg \T_{\K} \T_{\K}^\ast.
\end{equation*}
Since the mapping $\clf{X} \mapsto (\I - \clf{X})^\inv$ is operator monotone, the minimum over $\K$ is attained by $\K_\circ$ and the optimal value is given by the following optimization:
\begin{equation}
     \inf_{ \gamma \geq 0 }  \gamma \rho^2 +  \gamma\Tr\br{ ( \I - \gamma^\inv\T_{\K_\circ} \T_{\K_\circ}^\ast )^\inv - \I } \quad  \textrm{s.t.} \quad  \gamma \I \psdg \T_{\K_\circ} \T_{\K_\circ}^\ast.
\end{equation}
\qed

%\paragraph{\textit{Proof of \cref{lem:infinite horizon non-causal}}}:

\subsection{Proofs of \texorpdfstring{\cref{thm:dual formulation}}{Theorem 3.8} and  \texorpdfstring{\cref{thm:stability}}{Corollary 3.9}} \label{app:infinite horizon full optimality}

\begin{lemma}[{Wiener-Hopf Method \cite{kailath_linear_2000}}]\label{lem:wiener-hopf}
For a bounded and positive definite Toeplitz operator $\M\psdg 0$, let $\M \mapsto \Phi(\M)$ be a mapping defined as 
\begin{equation}\label{eq:wiener-hopf function_method}
    \Phi(\M) \triangleq \inf_{\K \in \causal} \Tr\pr{ \T_{\K} \T_{\K}^\ast \M }.
\end{equation}
Denote by $\M = \U^\ast \U$ and $\Delta \Delta^\ast = \I + \HH \HH^\ast$ the canonical spectral factorizations\footnote{The canonical spectral factorization is essentially the Toeplitz operator counterpart of Cholesky decomposition of finite-dimensional matrices.} where $\U$, $\Delta$ as well as their inverses $\U^\inv$, $\Delta^\inv$ are causal operators. The following statements hold:
\begin{itemize}
    \item[i.] The optimal causal solution to \eqref{eq:wiener-hopf function} is given by 
    \begin{equation}
            \K = \U^{-1} \cl{ \U \K_\circ \Delta }_{+} \Delta^\inv = \K_{\Htwo} +  \U^{-1}\cl{ \U \{\K_\circ \Delta\}_{-} }_{+} \Delta^\inv,
    \end{equation}
    where $\K_{\Htwo} \defeq  \cl{\K_\circ \Delta }_{+} \Delta^\inv $ is the Kalman filter. 
    \item[ii.] The function $\Phi$ can be written in closed form as 
    \begin{equation}
        \Phi(\M) = \Tr\br{ \cl{ \U \K_\circ \Delta }_{-}\cl{ \U \K_\circ \Delta }_{-}^\ast} + \Tr\pr{\T_{\K_\circ}\T_{\K_\circ}^\ast \M },
    \end{equation}
    where $\T_{\K_\circ}\T_{\K_\circ}^\ast = \L(\I+\HH^\ast \HH)^\inv \L^\ast$.
    \item[iii.] The gradient of $\Phi$ has the following closed form
    \begin{equation}\label{eq: grad Phi}
        \nabla \Phi(\M) = \T_{\K} \T_{\K}^\ast = \U^{-1} \cl{ \U \K_\circ \Delta }_{-}\cl{\U \K_\circ \Delta }_{-}^\ast \U^{-\ast} + \T_{\K_\circ}\T_{\K_\circ}^\ast .
    \end{equation}
\end{itemize}
\end{lemma}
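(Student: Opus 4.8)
The plan is to treat the inner minimization over causal $\K$ as a classical Wiener--Hopf projection, reducing it to an orthogonal projection onto the causal subspace in the Hilbert--Schmidt geometry weighted by $\M$. I would begin from the completion-of-squares identity already established in \eqref{eq:infinite horizon TK in terms of non-causal}, namely
\[
\T_\K\T_\K^\ast = (\K-\K_\circ)(\I+\HH\HH^\ast)(\K-\K_\circ)^\ast + \T_{\K_\circ}\T_{\K_\circ}^\ast,
\]
where $\K_\circ$ is the non-causal optimal estimator of \cref{lem:infinite horizon non-causal}. Substituting this into the objective, applying the cyclic property of the trace, and inserting the two canonical factorizations $\M=\U^\ast\U$ and $\Delta\Delta^\ast=\I+\HH\HH^\ast$ recasts the $\K$-dependent contribution as a weighted squared norm,
\[
\Tr(\T_\K\T_\K^\ast\M) = \norm[\HS]{\U(\K-\K_\circ)\Delta}^2 + \Tr(\T_{\K_\circ}\T_{\K_\circ}^\ast\M),
\]
whose second summand does not depend on $\K$.

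Next I would change variables to $\X\defeq\U\K\Delta$. Because $\U$, $\Delta$, and their inverses are all causal, the map $\K\mapsto\U\K\Delta$ is a bijection of $\causal$ onto itself, so the minimization is equivalent to minimizing $\norm[\HS]{\X-\U\K_\circ\Delta}^2$ over causal $\X$. Splitting the target as $\U\K_\circ\Delta=\{\U\K_\circ\Delta\}_{+}+\{\U\K_\circ\Delta\}_{-}$ and using that causal and strictly anti-causal operators are orthogonal in the Hilbert--Schmidt inner product gives
\[
\norm[\HS]{\X-\U\K_\circ\Delta}^2 = \norm[\HS]{\X-\{\U\K_\circ\Delta\}_{+}}^2 + \norm[\HS]{\{\U\K_\circ\Delta\}_{-}}^2,
\]
so the unique causal minimizer is $\X=\{\U\K_\circ\Delta\}_{+}$, i.e. $\K=\U^\inv\{\U\K_\circ\Delta\}_{+}\Delta^\inv$. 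This is the first form in (i). The second form follows by writing $\K_\circ\Delta=\{\K_\circ\Delta\}_{+}+\{\K_\circ\Delta\}_{-}$ and noting that $\U\{\K_\circ\Delta\}_{+}$ is already causal, so that $\{\U\K_\circ\Delta\}_{+}=\U\{\K_\circ\Delta\}_{+}+\{\U\{\K_\circ\Delta\}_{-}\}_{+}$, which after multiplying by $\U^\inv$ on the left and $\Delta^\inv$ on the right yields $\K_{\Htwo}+\U^\inv\{\U\{\K_\circ\Delta\}_{-}\}_{+}\Delta^\inv$ with $\K_{\Htwo}=\{\K_\circ\Delta\}_{+}\Delta^\inv$.

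Part (ii) is then obtained by reading off the minimal value, $\Phi(\M)=\norm[\HS]{\{\U\K_\circ\Delta\}_{-}}^2+\Tr(\T_{\K_\circ}\T_{\K_\circ}^\ast\M)$, with the closed form $\T_{\K_\circ}\T_{\K_\circ}^\ast=\L(\I+\HH^\ast\HH)^\inv\L^\ast$ coming directly from the computation in the proof of \cref{lem:infinite horizon non-causal}. For part (iii) I would invoke Danskin's theorem \cite{danskin}: since $\Phi$ is an infimum of maps that are linear in $\M$, its Gateaux gradient equals $\T_\K\T_\K^\ast$ evaluated at the optimal $\K$. From $\U\K\Delta=\{\U\K_\circ\Delta\}_{+}$ we have $\U(\K-\K_\circ)\Delta=-\{\U\K_\circ\Delta\}_{-}$, and feeding this back into the completion-of-squares identity gives precisely $\nabla\Phi(\M)=\U^\inv\{\U\K_\circ\Delta\}_{-}\{\U\K_\circ\Delta\}_{-}^\ast\U^{-\ast}+\T_{\K_\circ}\T_{\K_\circ}^\ast$.

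The step I expect to be the crux is the orthogonal-projection argument in the doubly-infinite Toeplitz setting. The trace here is the normalized per-unit-time trace, equivalently $\frac{1}{2\pi}\int_{-\pi}^{\pi}\Tr(\cdot)\,\mathrm{d}\omega$ on $\TT$, and the orthogonality of $\{\cdot\}_{+}$ and $\{\cdot\}_{-}$ is the Hardy-space splitting $L^2(\TT)=H^2\oplus(H^2)^{\perp}$. What needs care is showing that $\K_\circ\Delta$, and hence $\U\K_\circ\Delta$, actually lies in this $L^2$ class so that the projection is well-defined, finite, and attained; this is where Assumptions~\ref{asmp:infinite horizon assumptions} and~\ref{asmp: nominal disturbance}, the boundedness of $\U$, $\Delta$ and their causal inverses, and the bounded $\Hinf$ norm of $\K$ enter. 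Once integrability is secured, the remaining manipulations are routine operator algebra.
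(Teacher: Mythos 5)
Your proposal is correct and takes essentially the same route as the paper's own proof: the completion-of-squares identity \eqref{eq:infinite horizon TK in terms of non-causal}, insertion of the spectral factors to rewrite the objective as $\Norm[\HS]{\U\K\Delta - \U\K_\circ\Delta}^2$ plus a $\K$-independent term, annihilation of the causal part so that $\U\K\Delta = \{\U\K_\circ\Delta\}_{+}$, the splitting $\K_\circ\Delta=\{\K_\circ\Delta\}_{+}+\{\K_\circ\Delta\}_{-}$ for the second form in (i), and Danskin's theorem for (iii). The only difference is one of rigor, in your favor: you justify the projection step via the Hilbert--Schmidt orthogonality of causal and strictly anti-causal parts and flag the integrability/well-posedness of $\U\K_\circ\Delta$, points the paper's proof treats as ``evident.''
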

\begin{proof}
        Using the identity \eqref{eq:infinite horizon TK in terms of non-causal} and the cyclic property of $\Tr$, the objective can be written as,
    \begin{align}\label{eq:wiener-hopf function_eqn}
        \inf_{\K \in\causal} \Tr\br{ (\K - \K_\circ ) \Delta \Delta^\ast (\K - \K_\circ )^{\ast}\M }  &= \inf_{\K \in\causal} \Tr\br{ (\K\Delta - \K_\circ\Delta ) (\K \Delta - \K_\circ \Delta )^{\ast}\U^\ast \U } \\
        &= \inf_{\K \in\causal} \inf_{\K \in\causal} \Tr\br{ (\U\K\Delta - \U\K_\circ\Delta ) (\U\K \Delta - \U\K_\circ \Delta )^{\ast}  } \\
        &= \inf_{\K \in\causal} \Norm[\Htwo]{\U\K \Delta - \U\K_\circ \Delta }^2 ,
    \end{align}
 where $\|\cdot\|_2$ represents the $\Htwo$ norm. 
 Since $\Delta, \K$ and $\U$ are causal, and $\U \K_\circ \Delta$ can be broken into causal and non-causal parts, it is evident that the (causal) filter that minimises the objective is the one that makes the term $\U\K \Delta - \U\K_\circ \Delta$ strictly anti-causal, cancelling off the causal part of $\Delta\K_\circ \L$. This means that the optimal filter satisfies,
 \begin{align}
    \U\K \Delta = \cl{\U\K_\circ\Delta}_{\!+}.
\end{align}
Also, since $\U^\inv$ and $\Delta^\inv$ are causal, the optimal causal filter is given by 
\begin{equation}
    \K = \U^{-1} \cl{ \U \K_\circ \Delta }_{+} \Delta^\inv.
\end{equation}
Furthermore, using the identity $ \K_\circ \Delta = \{ \K_\circ \Delta\}_{+}+ \{ \K_\circ \Delta\}_{-}$, we get 
\begin{align}
    \K &= \U^{-1} \cl{ \U \{\K_\circ \Delta\}_{+} }_{+} \Delta^\inv + \U^{-1} \cl{ \U \{\K_\circ \Delta\}_{-} }_{+} \Delta^\inv,\\
    &= \U^{-1} \U \{\K_\circ \Delta\}_{+} \Delta^\inv + \U^{-1} \cl{ \U \{\K_\circ \Delta\}_{-} }_{+} \Delta^\inv,\\
    &= \{\K_\circ \Delta\}_{+} \Delta^\inv + \U^{-1} \cl{ \U \{\K_\circ \Delta\}_{-} }_{+} \Delta^\inv.
\end{align}
Plugging this solution to $ \T_\K \T_\K^\ast$, we get 
\begin{align}
    \T_\K \T_\K^\ast &= \U^\inv  (\U\K \Delta - \U\K_\circ \Delta )   (\U\K \Delta - \U\K_\circ \Delta )^\ast \U^{\-\ast} + \T_{\K_\circ} \T_{\K_\circ}^\ast,\\
    &=\U^\inv(\cl{\U\K_\circ\Delta}_{\!+}- \U\K_\circ \Delta )   (\cl{\U\K_\circ\Delta}_{\!+} - \U\K_\circ \Delta )^\ast \U^{\-\ast} + \T_{\K_\circ} \T_{\K_\circ}^\ast, \\
    &=\U^\inv\cl{\U\K_\circ\Delta}_{\!-}   \cl{\U\K_\circ\Delta}_{\!-}^\ast \U^{\-\ast}  + \T_{\K_\circ} \T_{\K_\circ}^\ast. \label{eq: grad of Phi from tk}
\end{align}
Then, the objective becomes
\begin{align}
    \Tr(\T_\K \T_\K^\ast \M) &=\Tr(\U^\inv\cl{\U\K_\circ\Delta}_{\!-}   \cl{\U\K_\circ\Delta}_{\!-}^\ast \U^{\-\ast}\M)  + \Tr(\T_{\K_\circ} \T_{\K_\circ}^\ast \M),\\
    &=\Tr(\cl{\U\K_\circ\Delta}_{\!-}   \cl{\U\K_\circ\Delta}_{\!-}^\ast )  + \Tr(\T_{\K_\circ} \T_{\K_\circ}^\ast \M).
\end{align}
Finally, by Danskin theorem \cite{danskin}, the gradient of $\Phi$ is simply $\T_\K \T_\K^\ast$ evaluated at the optimal $\K$ as given in \eqref{eq: grad of Phi from tk}.
\end{proof}

\begin{lemma}\label{thm: fenchel dual}
Let $\gamma > \inf_{\K\in\causal} \norm[\op]{\T_{\K}}^2$ be fixed. Then, we have the following duality
\begin{equation}\label{eq:suboptimal prob}
    \inf_{\substack{\K \in \causal,\\ \gamma \I \psdg \T_\K \T_\K^\ast}} \gamma\Tr\br{ ( \I \- \gamma^\inv \T_{\K} \T_{\K}^\ast )^\inv \- \I }  \=  \sup_{\M \psdg 0}  \inf_{\K \in\causal} \Tr(\T_{\K} \T_{\K}^\ast\M ) \- \gamma\Tr\pr{\M \- 2\sqrt{\M}\+\I} .
\end{equation}
\end{lemma}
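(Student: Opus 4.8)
The plan is to prove the identity in two moves: first a pointwise (fixed-$\K$) convex-conjugate computation that rewrites the left-hand integrand as an inner supremum over $\M$, and then a minimax interchange that turns the resulting $\inf_\K\sup_\M$ into the $\sup_\M\inf_\K$ on the right.

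First I would establish the operator conjugate identity. Fix $\K\in\causal$, write $\CC\defeq\T_\K\T_\K^\ast\psdgeq 0$, and consider the functional $F(\M)\defeq\Tr(\CC\M)-\gamma\Tr(\M-2\sqrt\M+\I)$ over $\M\psdg 0$. It is concave, since $\Tr(\CC\M)$ and $-\gamma\Tr\M$ are linear while $2\gamma\Tr\sqrt\M$ is concave (the square root is operator concave and the trace preserves concavity). Its first-order stationarity condition reads $\CC-\gamma\I+\gamma\M^{-1/2}=0$, i.e.\ $\M^{-1/2}=\I-\gamma^{-1}\CC$, which has the positive-definite solution $\M_\star=(\I-\gamma^{-1}\CC)^{-2}$ precisely when $\gamma\I\psdg\CC$. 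Substituting $\M_\star$ back (equivalently, simultaneously diagonalizing and applying the scalar identity $\sup_{m\ge 0}[\,cm-\gamma(m-2\sqrt m+1)\,]=\gamma c/(\gamma-c)$ valid for $c<\gamma$) gives $\sup_{\M\psdg 0}F(\M)=\gamma\Tr[(\I-\gamma^{-1}\CC)^{-1}-\I]$. If instead $\gamma\I\not\psdg\CC$, then along the eigendirection of any eigenvalue $c\ge\gamma$ the scalar objective $(c-\gamma)m+2\gamma\sqrt m-\gamma$ diverges as $m\to\infty$, so the supremum equals $+\infty$.

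With this identity, the left-hand side equals $\inf_{\K\in\causal}\sup_{\M\psdg 0}F(\K,\M)$, because any $\K$ violating $\gamma\I\psdg\T_\K\T_\K^\ast$ yields inner value $+\infty$ and is never selected by the outer infimum, while the hypothesis $\gamma>\inf_{\K\in\causal}\norm[\op]{\T_\K}^2$ guarantees a strictly feasible $\K$ so that the infimum is finite. It then remains to justify the interchange $\inf_\K\sup_\M=\sup_\M\inf_\K$. Here $F(\K,\M)$ is jointly convex--concave: concave in $\M$ as shown, and convex in $\K$ because, by the completion-of-squares identity \eqref{eq:infinite horizon TK in terms of non-causal}, $\Tr(\T_\K\T_\K^\ast\M)=\Tr\big((\K-\K_\circ)(\I+\HH\HH^\ast)(\K-\K_\circ)^\ast\M\big)+\Tr(\T_{\K_\circ}\T_{\K_\circ}^\ast\M)$ is, for $\M\psdg 0$ and $\I+\HH\HH^\ast\psdg 0$, a squared Hilbert--Schmidt norm of an affine function of $\K$, hence convex. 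Since $\causal$ is an affine space and $\{\M\psdg 0\}$ a convex cone, Sion's minimax theorem applies once the appropriate compactness/coercivity hypothesis is verified, yielding the claimed equality with the right-hand side.

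The main obstacle is exactly this last interchange in the infinite-dimensional operator setting, where neither feasible set is compact. I would handle it by exploiting coercivity: for each fixed $\M\psdg 0$ the quadratic term forces $F(\K,\M)\to+\infty$ as $\norm{\K}\to\infty$, so the inner minimization can be restricted to a bounded (weak-$\ast$ compact) sublevel set of $\K$ without changing the value, and symmetrically the strict feasibility from $\gamma>\inf_{\K\in\causal}\norm[\op]{\T_\K}^2$ controls the relevant range of $\M$; on these localized convex sets the standard convex--concave minimax theorem delivers the swap. The technical heart of this step is verifying the requisite semicontinuity of the normalized-trace functionals for doubly-infinite Toeplitz operators, which I would carry out through their frequency-domain symbols.
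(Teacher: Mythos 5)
Your proposal is correct and takes essentially the same route as the paper's proof: the paper likewise invokes the Fenchel-conjugate identity $\sup_{\M \psdg 0}\br{-\Tr(\clf{X}\M) + 2\Tr(\sqrt{\M})} = \Tr(\clf{X}^{-1})$ for $\clf{X}\psdg 0$ (and $+\infty$ otherwise) to rewrite the left-hand side as $\inf_{\K\in\causal}\sup_{\M\psdg 0}$ of the same convex--concave objective, and then exchanges $\inf$ and $\sup$ via a minimax theorem, citing convexity in $\K$, concavity in $\M$, and the feasibility supplied by $\gamma > \inf_{\K\in\causal}\norm[\op]{\T_\K}^2$. Your write-up only differs in that it derives the conjugate identity explicitly (via stationarity and the scalar reduction) and is more candid about the coercivity/compactness hypotheses needed to justify the swap in the infinite-dimensional Toeplitz setting, which the paper leaves implicit.
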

\begin{proof}
The convex mapping $\clf{X}\! \mapsto\! \Tr \clf{X}^{\inv}$ for $\clf{X}\!\psdg\! 0$ can be expressed via Fenchel duality as
\begin{equation}\label{eq:trace_of_inverse}
   \sup_{\M \psdg 0 } -\Tr(\clf{X} \M ) + 2\Tr(\sqrt{\M}) = \begin{cases} \Tr(\clf{X}^{-1}), & \quad\text{if } \clf{X}\psdg 0 \\  +\infty, &\quad \text{o.w.} \end{cases}
\end{equation}
Using the identity~\eqref{eq:trace_of_inverse}, we rewrite the original problem as, 
\begin{equation}\label{eq:suboptimal prob_rewrite}
       \inf_{\K \in\causal} \sup_{\M \psdg 0}  \Tr(\T_{\K} \T_{\K}^\ast\M ) - \gamma\Tr\pr{\M - 2\sqrt{\M}+\I} .
\end{equation}
Notice that, the objective above is strictly convex in $\K$ and strictly concave in $\M$. Furthermore, the primal and dual problems are feasible since $\gamma > \inf_{\K\in\causal} \norm[\op]{\T_{\K}}^2$. Thus, the proof follows from the minimax theorem. 
\end{proof}

\paragraph{\textit{Proof of \cref{thm:dual formulation}}:}
Consider \cref{prob:infinite horizon drf}. Using the strong duality result in \cref{thm: strong duality infinite horizon}, we can express \cref{eq:infinite horizon drf} equivalently as
\begin{equation}\label{eq: full optimality}
    \inf_{ \gamma \geq 0 } \inf_{\K\in\causal}  \gamma \rho^2 +  \gamma\Tr\br{ ( \I - \gamma^\inv \T_{\K} \T_{\K}^\ast )^\inv - \I } \quad  \textrm{s.t.} \quad  \gamma \I \psdg \T_{\K} \T_{\K}^\ast.
\end{equation}
Fixing $\gamma\geq 0$, we focus on the subproblem
\begin{equation}\label{eq: subproblem for causal infinite horizon}
    \inf_{\K} \gamma \Tr\br{ ( \I - \gamma^\inv\T_{\K} \T_{\K}^\ast )^\inv -\I } \quad  \textrm{s.t.} \quad  \gamma \I \psdg \T_{\K} \T_{\K}^\ast.
\end{equation}
Using \cref{thm: fenchel dual}, we can reformulate \eqref{eq: subproblem for causal infinite horizon} as 
\begin{equation}
    \sup_{\M \psdg 0}  \inf_{\K \in\causal} \Tr(\T_{\K} \T_{\K}^\ast\M ) \- \gamma\Tr\pr{\M \- 2\sqrt{\M}\+\I}.
\end{equation}
Thus, the original formulation in \eqref{eq: full optimality} can be expressed as
\begin{equation}
    \inf_{ \gamma \geq 0 } \sup_{\M \psdg 0}  \inf_{\K \in\causal}    \Tr(\T_{\K} \T_{\K}^\ast\M ) + \gamma\pr{\rho^2- \Tr(\M - 2\sqrt{\M}+\I)}.
\end{equation}
Note that the objective above is affine in $\gamma\geq0$ and strictly concave in $\M$. Moreover, primal and dual feasibility hold, enabling the exchange of $ \inf_{ \gamma \geq 0 } \sup_{\M \psdg 0} $ resulting in
\begin{align}
    \sup_{\M \psdg 0}  \inf_{\K \in\causal}   \inf_{ \gamma \geq 0 }  \Tr(\T_{\K} \T_{\K}^\ast\M ) + \gamma\pr{\rho^2- \Tr(\M - 2\sqrt{\M}+\I)},
\end{align}
where the inner minimization over $ \gamma$ reduces the problem to its constrained version in \cref{eq:infinite horizon drf convex}.

Finally, the form of the optimal $\K_\star$ follows from the Wiener-Hopf technique in \cref{lem:wiener-hopf} and the optimal $\gamma_\star$ and $\M_\star$ can be obtained using the strong duality theorem in \eqref{thm: strong duality infinite horizon}.

\paragraph{\textit{{Proof of \cref{thm:stability}}}:} This result follows immediately from the finiteness of the time-averaged infinite-horizon MSE. 

%\subsection{Proof of Strong Duality}

%\jh{Exact same proof as control version. Just refer to the previous paper}
% \subsection{Proof of Theorem ~\ref{thm:infinite strong duality}}

\section{Additional Discussion on Frequency-domain Optimization Method}\label{ap:numerical method}
\subsection{Pseudocode for Frequency-domain Iterative Optimization Method Solving \texorpdfstring{\cref{eq:infinite horizon drf convex}}{Eq. (14)}}\label{app: detailed pseudocode }

\begin{algorithm}[ht]
   \caption{Frequency-domain iterative optimization method solving \cref{eq:infinite horizon drf convex}}
   \label{alg:fixed_point_detailed}
\begin{algorithmic}[1]
   \STATE {\bfseries Input:} Radius $\rho\>0$, state-space model $(A,B,C_y,C_s)$, discretization $N\>0$, tolerance $\epsilon\>0$ %\tk{$\{\Delta K_\circ\}_{\!-}$} $(\overline{A}, \overline{B}, \overline{C})$,
   \vspace{0.5mm}
   \STATE Compute $(\overline{A},\overline{B},\overline{C})$ from $(A,B,C_y,C_s)$ using \eqref{eq: modified A B C}
   \vspace{0.5mm}
   \STATE Generate frequency samples $\TT_N \defeq \{\e^{j 2\pi n /N} \mid n \!=\! 0,\dots,N\!-\!1\}$
   \vspace{0.5mm}
   \STATE Initialize $M_{0}(z) \gets I$ for $z\in\TT_N$, and $k\gets 0$
   \vspace{0.5mm}
   \REPEAT 
   \vspace{0.5mm}
   \STATE Set the step size $\eta_{k}\gets \frac{2}{k+2}$
   \vspace{0.5mm}
   \STATE Compute the spectral factor $\begin{aligned}U_{k}(z)\gets \texttt{SpectralFactor}(M_{k})\end{aligned}$ (see \cref{alg:spectral factor method})%for $z\in\TT_N$
   \vspace{0.5mm}
   \STATE Compute the parameter $\begin{aligned}\Gamma_{k} \gets \frac{1}{N}\suml_{z\in\TT_N} U_{k}(z) \overline{C}(I-z\overline{A})^{-1} \end{aligned}$ (see \cref{app: gradients in fw})
   \vspace{1mm}
   \STATE Compute the gradient for $z\in\TT_N$ (see \cref{app: gradients in fw})\\
   \vspace{0.5mm}
    $\begin{aligned}\quad\quad G_{k}(z) \gets U_{k}(z)^{\!-\!1} \Gamma_{k} (I\!-\!z\overline{A})^{\!-\!1}\overline{B}\, \overline{B}^\ast (I\!-\!z\overline{A})^{\!-\!\ast} \Gamma_{k}^\ast U_{k}(z)^{\!-\!\ast} \!+\! T_{K_\circ}(z)T_{K_\circ}(z)^\ast\end{aligned}$
    \vspace{1mm}
   \STATE Solve the linear subproblem \eqref{eq:fw linear subproblem} via bisection (see \cref{app:bisection})\\
   \vspace{0.5mm}
    $\begin{aligned}\quad\quad \widetilde{M}_{k}(z) \gets \texttt{Bisection}(G_k, \rho, \epsilon)\end{aligned}$ for $z\in\TT_N$.
    \vspace{0.5mm}
    \STATE Set $M_{k+1}(z)\gets (1-\eta_k) M_k(z) + \eta_k \widetilde{M}_{k}(z) $ for $z\in\TT_N$.
    \vspace{0.5mm}
   \STATE Increment $k\gets k+1$
   \vspace{0.5mm}
   \UNTIL{$\norm{M_{k+1} - M_{k}}/\norm{M_{k}} \leq \epsilon $}
   \vspace{0.5mm}
   \STATE Compute $K(z) \gets \texttt{RationalApproximate}(M_{k+1})$ (see \cref{app: rational approximation})
\end{algorithmic}
\end{algorithm}

\subsection{Frequency-Domain Characterization of the Optimal Solution of  \texorpdfstring{\cref{eq:infinite horizon drf}}{Eq. (11)}} \label{app: frequency domain details}

We present the frequency-domain formulation of the saddle point  $(\K_\star,\M_\star)$ derived in \cref{thm:dual formulation} to reveal the structure of the solution. We first introduce the following useful results:

\begin{lemma}[{\cite[pg. 261]{blackbook}}]\label{lem:delta  factorization}
    Given $H(z) \defeq C_y (zI - A)^\inv B $, consider the canonical spectral factorization $\Delta(z)\Delta(z)^\ast I + H(z)H(z)^\ast$ for $z\in\TT$. We have that
    \begin{align}
        \Delta(z) &= ( I + C_y (zI-A)^\inv F_P) R_e^{1/2}, \\
        \Delta(z)^\inv &= R_e^{-1/2} ( I - C_y (zI-A_P)^\inv F_P), 
    \end{align}
    where $R_e \defeq I + C_y P C_y^\ast$, $F_P \defeq (A P C_y^\ast) R_e^\inv$, $A_P \defeq A - F_P C_y$, and $P$ is the unique positive semidefinite solution to the following discrete algebraic Riccati equation (DARE)
    \begin{equation}\label{eq:dare}
        P = A P A^\ast + B B^\ast - F_P R_e F_P^\ast.
    \end{equation}
\end{lemma}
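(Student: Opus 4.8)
The plan is to treat this as a direct verification anchored on the discrete algebraic Riccati equation \eqref{eq:dare}. There are three things to establish: \textit{(i)} that the proposed $\Delta(z)$ satisfies the factorization identity $\Delta(z)\Delta(z)^\ast = I + H(z)H(z)^\ast$ on $\TT$; \textit{(ii)} that the proposed expression for $\Delta(z)^{-1}$ is genuinely the inverse of $\Delta(z)$; and \textit{(iii)} that both $\Delta$ and $\Delta^{-1}$ are causal (analytic and bounded on and outside the unit circle), which is precisely what upgrades the factorization to the \emph{canonical} one. Throughout I would exploit that on $z\in\TT$ one has $z^\ast = z^{-1}$, so that $H(z)^\ast = B^\ast(z^{-1}I-A^\ast)^{-1}C_y^\ast$ and $\big((zI-A)^{-1}\big)^\ast = (z^{-1}I-A^\ast)^{-1}$.

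For \textit{(i)}, I would expand $\Delta(z)\Delta(z)^\ast$ with $\Delta(z) = (I + C_y(zI-A)^{-1}F_P)R_e^{1/2}$ into four terms and compare against $I + C_y(zI-A)^{-1}BB^\ast(z^{-1}I-A^\ast)^{-1}C_y^\ast$. The algebraic engine is the Riccati equation: solving \eqref{eq:dare} for $BB^\ast = P - APA^\ast + F_P R_e F_P^\ast$ lets me match the term carrying $F_P R_e F_P^\ast$ immediately (using $F_P R_e = APC_y^\ast$ and $R_e = I + C_y P C_y^\ast$), reducing the whole claim to the resolvent identity
\begin{equation*}
(zI-A)^{-1}(P - APA^\ast)(z^{-1}I-A^\ast)^{-1} = P + (zI-A)^{-1}AP + PA^\ast(z^{-1}I-A^\ast)^{-1},
\end{equation*}
sandwiched between $C_y$ and $C_y^\ast$. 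This identity I would verify by multiplying on the left by $(zI-A)$ and on the right by $(z^{-1}I-A^\ast)$, after which the telescoping $z$- and $z^{-1}$-terms cancel and both sides collapse to $P - APA^\ast$.

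For \textit{(ii)}, I would multiply the two proposed expressions so the $R_e^{1/2}$ factors cancel, reducing the claim to $(I + C_y(zI-A)^{-1}F_P)(I - C_y(zI-A_P)^{-1}F_P) = I$. Using $F_P C_y = A - A_P$ together with the push-through identity $(zI-A)^{-1}(A-A_P)(zI-A_P)^{-1} = (zI-A)^{-1} - (zI-A_P)^{-1}$, the quadratic cross term telescopes into the difference of the two linear terms and cancels them exactly, leaving $I$.

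The main obstacle is part \textit{(iii)}, the canonicity. The factor $\Delta(z)$ has its poles at the eigenvalues of $A$ and is therefore causal; the delicate point is that $\Delta(z)^{-1}$ has its poles at the eigenvalues of $A_P = A - F_P C_y$, so causal invertibility hinges on $A_P$ being Schur-stable. This is exactly the stabilizing-solution property of the Riccati equation: under the controllability of $(A,B)$ and detectability of $(A,C_y)$ assumed in \cref{sec:prelim}, \eqref{eq:dare} admits a unique positive semidefinite solution $P$ for which $A_P$ has all its eigenvalues strictly inside the unit disk. I would invoke this standard DARE theorem to conclude simultaneously the well-posedness of the construction (existence and uniqueness of $P$) and the causality of $\Delta^{-1}$, which together with \textit{(i)}--\textit{(ii)} identifies $\Delta$ as \emph{the} canonical spectral factor.
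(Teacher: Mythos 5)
The paper does not prove this lemma at all: it is imported verbatim from the classical reference (\cite[pg.~261]{blackbook}) as a known fact about the DARE-based canonical spectral factorization, so there is no in-paper proof to compare against. Your blind attempt is therefore strictly more than the paper offers, and it is correct. The three-step structure is the right one, and the two algebraic engines you identify do the work claimed: in step \textit{(i)}, after substituting $BB^\ast = P - APA^\ast + F_P R_e F_P^\ast$ and using $F_P R_e = APC_y^\ast$, $R_e F_P^\ast = C_y P A^\ast$, the residual identity
\begin{equation*}
(zI-A)^{-1}(P - APA^\ast)(z^{-1}I-A^\ast)^{-1} = P + (zI-A)^{-1}AP + PA^\ast(z^{-1}I-A^\ast)^{-1}
\end{equation*}
does collapse to $P - APA^\ast$ on both sides after the left/right multiplications you prescribe (the $zPA^\ast$ and $z^{-1}AP$ terms telescope exactly); in step \textit{(ii)} the push-through identity $(zI-A)^{-1}(A-A_P)(zI-A_P)^{-1} = (zI-A)^{-1} - (zI-A_P)^{-1}$, which follows from writing $A - A_P = (zI-A_P)-(zI-A)$, cancels the linear terms exactly. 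Step \textit{(iii)} correctly locates the crux in the stabilizing-solution property of the DARE, which under the paper's detectability/controllability assumptions gives $A_P$ Schur and hence causal invertibility of $\Delta$.

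One point worth tightening: your parenthetical claim that $\Delta(z)$ is causal \emph{because} its poles sit at the eigenvalues of $A$ silently assumes $A$ is Schur-stable. The paper's stated assumptions (Section 2) list only detectability of $(A,C_y)$, $(A,C_s)$ and controllability of $(A,B)$; stability of $A$ is implicit in the infinite-horizon setup (it is needed for $\HH$ and $\L$ to be bounded causal Toeplitz operators and for the nominal process to be stationary at all), but since your proof leans on it you should state it as a hypothesis rather than let it ride on "poles at eigenvalues of $A$, therefore causal" — a proper rational function with unstable poles is causal only as a formal power series, not as a bounded operator on $\TT$, and canonicity of the spectral factor requires the latter.
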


Denoting by $M_\star(z)$ and $T_{K_\star}(z)$ the transfer functions corresponding to the optimal $\M_\star$ and $\T_{\K_\star}$, respectively, the optimality condition in \eqref{eq:optimal K and M} takes the equivalent form:
\begin{subequations}\label{eq: frequency domain optimal M and K}
    \begin{align}
    &\textit{i.}\; M_\star(z) =  \pr{ I - \gamma_\star^\inv T_{K_\star} (z) T_{K_\star} (z)^\ast }^{-2},  \label{eq: optimal M(z)}\\
    &\textit{ii.}\; T_{K_\star}(z) T_{K_\star}(z)^\ast \= U_\star(z)^{\-1} \cl{ \U_\star \Ss }_{\-}(z) \cl{\U_\star \Ss}_{\-}(z)^\ast U_\star(z)^{\-\ast} \+ T_{K_\circ}(z)T_{K_\circ}(z)^\ast,  \label{eq: optimal K(z)}\\
    &\textit{iii.}\; \Tr\br{\pr{(I - \gamma_\star^\inv T_{K_\star} (z) T_{K_\star} (z)^\ast)^\inv - I}^2 } = \rho^2,  \label{eq: optimal gamma star}
\end{align}
\end{subequations}
where $\Ss \defeq \{\K_\circ \Delta \}_{-}$ is a strictly anticausal operator and $U_\star(z)$ is the transfer function corresponding to the causal canonical factor $\U_\star$

The transfer function corresponding to the operator $\Ss$ takes a rational form as 
\begin{equation}\label{eq: S(z)}
    S(z) \defeq \overline{C}(z^\inv I - \overline{A})^\inv \overline{B},
\end{equation}
where $(\overline{A},\overline{B},\overline{C})$ are determined by the original state-space parameters $(A,B,C_y,C_s)$. The following lemma explicitly states this result. 
\begin{lemma}[{\cite[pg. 261]{blackbook} and \cite[Lem. 6]{sabag_regret-optimal_2022}}]\label{lem: state space decomposition of non-causal}
We have that 
\begin{equation}
    \K_\circ \Delta = \K_{\Htwo} \Delta  + \Ss ,
\end{equation}
where $\K_{\Htwo}$ is the nominal causal $\Htwo$ (aka Kalman) filter and $\Ss \defeq \{\K_\circ \Delta \}_{-}$ is strictly anti-causal. Furthermore, the corresponding transfer functions take an explicit form as highlighted below
\begin{align}\label{eq: state space decomposition}
    K_{\Htwo}(z) &\defeq C_s P C_y^\star R_e^\inv + C_s (I-PC_y^\ast R_e^\inv C_y)(zI - A_P)^\inv F_P,\\
    S(z) &\defeq C_s P A_P^\ast (z^\inv I - A_P^\ast)^\inv C_y^\ast R_e^{-1/2},
\end{align}
where $P$, $R_e$, $F_P$, and $A_P$ are defined as in \cref{lem:delta  factorization}.
\end{lemma}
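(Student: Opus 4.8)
The decomposition $\K_\circ\Delta = \K_{\Htwo}\Delta + \Ss$ is immediate once the closed forms are in hand: by \cref{lem:wiener-hopf} (specialized to $\M=\I$, so $\U=\I$) the Kalman filter is exactly $\K_{\Htwo}=\{\K_\circ\Delta\}_{+}\Delta^\inv$, whence $\K_{\Htwo}\Delta=\{\K_\circ\Delta\}_{+}$, while $\Ss\defeq\{\K_\circ\Delta\}_{-}$ by definition; summing the causal and strictly anti-causal parts recovers $\K_\circ\Delta$. Thus the entire content of the lemma lies in the two explicit transfer functions. The plan is to first collapse $\K_\circ\Delta$ to a single rational expression, then split it by hand into its causal and strictly anti-causal pieces.

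First I would use $\Delta\Delta^\ast=\I+\HH\HH^\ast$ to write $\K_\circ=\L\HH^\ast\Delta^{-\ast}\Delta^\inv$, so that $\K_\circ\Delta=\L\HH^\ast\Delta^{-\ast}$. The pivotal simplification is $\Delta^\inv\HH$: substituting $\Delta(z)^\inv=R_e^{-1/2}(I-C_y(zI-A_P)^\inv F_P)$ from \cref{lem:delta  factorization}, using $F_PC_y=A-A_P$ together with the resolvent identity $(zI-A_P)^\inv(A-A_P)(zI-A)^\inv=(zI-A)^\inv-(zI-A_P)^\inv$, the cross terms telescope and yield $\Delta(z)^\inv H(z)=R_e^{-1/2}C_y(zI-A_P)^\inv B$. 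Taking adjoints on $\TT$ (the substitution $z\mapsto z^\inv$ with transposition) gives the strictly anti-causal factor $H(z)^\ast\Delta(z)^{-\ast}=B^\ast(z^\inv I-A_P^\ast)^\inv C_y^\ast R_e^{-1/2}$, and therefore
\[
  K_\circ(z)\Delta(z)=C_s(zI-A)^\inv BB^\ast(z^\inv I-A_P^\ast)^\inv C_y^\ast R_e^{-1/2}.
\]

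Next I would split the core $N(z)\defeq(zI-A)^\inv BB^\ast(z^\inv I-A_P^\ast)^\inv$. Rearranging the DARE \eqref{eq:dare} with $A_P=A-F_PC_y$ and $APC_y^\ast=F_PR_e$ gives the Stein identity $BB^\ast=P-APA_P^\ast$. Inserting this and repeatedly applying $z(zI-A)^\inv=I+A(zI-A)^\inv$ (and its $A_P^\ast$ analogue in $z^\inv$) makes the mixed term cancel, leaving the clean splitting $N(z)=\bigl(P+A(zI-A)^\inv P\bigr)+PA_P^\ast(z^\inv I-A_P^\ast)^\inv$, in which the bracket is causal (a constant plus a strictly causal term) and the last summand is strictly anti-causal. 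Reading off the anti-causal part yields $S(z)=C_s PA_P^\ast(z^\inv I-A_P^\ast)^\inv C_y^\ast R_e^{-1/2}$, matching the claim verbatim.

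Finally, $K_{\Htwo}(z)=\{K_\circ\Delta\}_{+}(z)\Delta(z)^\inv$ with $\{K_\circ\Delta\}_{+}(z)=C_s(P+A(zI-A)^\inv P)C_y^\ast R_e^{-1/2}$. The main obstacle is the bookkeeping in this last product. Writing $G\defeq PC_y^\ast R_e^\inv$ (so $F_P=AG$ and $A_P=A(I-GC_y)$, i.e. $AGC_y=A-A_P$), I expect to prove the matrix identity $A(zI-A)^\inv G\,[\,I-C_y(zI-A_P)^\inv F_P\,]=(zI-A_P)^\inv F_P$ using the same resolvent identity together with $(zI-A)^\inv F_P=A(zI-A)^\inv G$; this collapses $\{K_\circ\Delta\}_{+}\Delta^\inv$ to the stated $K_{\Htwo}(z)=C_sPC_y^\ast R_e^\inv+C_s(I-PC_y^\ast R_e^\inv C_y)(zI-A_P)^\inv F_P$. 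No deep argument is required beyond the DARE and the two resolvent identities; the only recurring subtlety is consistently assigning constant terms to the causal part and interpreting adjoints on $\TT$ correctly.
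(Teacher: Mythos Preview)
Your proposal is correct. The paper does not actually prove this lemma: it is stated as a known result with citations to \cite[pg.~261]{blackbook} and \cite[Lem.~6]{sabag_regret-optimal_2022}, and the explicit forms of $\{K_\circ\Delta\}_{+}$ and $\{K_\circ\Delta\}_{-}$ are later reused (in the proof of \cref{thm:state space filter}) without derivation. Your argument---reducing $K_\circ\Delta$ to $C_s(zI-A)^{-1}BB^\ast(z^{-1}I-A_P^\ast)^{-1}C_y^\ast R_e^{-1/2}$ via $\Delta^{-1}H=R_e^{-1/2}C_y(zI-A_P)^{-1}B$, then splitting with the Stein identity $BB^\ast=P-APA_P^\ast$ obtained from the DARE, and finally collapsing $\{K_\circ\Delta\}_{+}\Delta^{-1}$ using $A(zI-A)^{-1}G\bigl[I-C_y(zI-A_P)^{-1}F_P\bigr]=(zI-A_P)^{-1}F_P$---is exactly the standard route in the cited references and every step checks out.
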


Thus, we have that
\begin{equation}\label{eq: modified A B C}
    \overline{A} \triangleq A_P^\ast, \quad \overline{B} \triangleq C_y^\star R_e^{-1/2}, \quad \overline{C}\triangleq C_s P A_P^\ast .
\end{equation}

Notice that for a causal $U(z)$ and strictly anti-causal $S(z)$, the strictly anti-causal part $\{U(z)S(z)\}_{-}$ may not have any poles from $U(z)$, and all of its poles must be from the strictly anti-causal $S(z)$. This observation is formally expressed in the following lemma.
\begin{lemma}\label{eq: finite parameter of anticausal (U S) }
Let $\U$ be a causal and causally invertible operator, which can be non-rational in general. Then, the strictly anti-causal operator $\cl{\U \Ss}_{\-}$ admits a rational transfer function, \ie,
    \begin{equation}
        \cl{\U \Ss}_{\-}(z) = \Gamma (z^\inv I - \overline{A})^\inv \overline{B},
    \end{equation}
    where 
    \begin{equation}
         \Gamma \triangleq \frac{1}{2\pi} \int_{-\pi}^{\pi}  U(\ejw) \overline{C} (I-\ejw \overline{A})^\inv d\omega.
    \end{equation}
\end{lemma}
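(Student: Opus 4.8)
The plan is to bypass any residue or partial-fraction reasoning---which is unavailable since $\U$ may be non-rational---and instead compute the strictly anti-causal part $\cl{\U\Ss}_{\-}$ directly from its Laurent-series coefficients on the unit circle $\TT$. Recall from \eqref{eq: S(z)} that $S(z)=\overline{C}(z^\inv I-\overline{A})^\inv\overline{B}$; expanding $(z^\inv I-\overline{A})^\inv = z(I-z\overline{A})^\inv = \sum_{k\geq 0}\overline{A}^{\,k}z^{k+1}$ exhibits $\Ss$ as a strictly anti-causal operator with Markov parameters $S_m=\overline{C}\,\overline{A}^{\,m-1}\overline{B}$ for $m\geq 1$. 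Since $\U$ is causal, its transfer function admits an expansion $U(z)=\sum_{n\geq 0}U_n z^{-n}$ with coefficients $U_n=\frac{1}{2\pi}\int_{-\pi}^{\pi}U(\ejw)\e^{jn\omega}\,d\omega$. Crucially, $\overline{A}=A_P^\ast$ is stable (spectral radius strictly below one), as $A_P$ is the closed-loop matrix arising from the DARE \eqref{eq:dare}; hence $\|\overline{A}^{\,k}\|\leq c\,r^{k}$ for some $c>0$ and $r<1$, which drives the convergence of every series below.

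First I would form the product and isolate the strictly anti-causal part. Multiplying the two expansions gives $U(z)S(z)=\sum_{n\geq 0}\sum_{m\geq 1}U_nS_m\,z^{m-n}$, and collecting the terms whose power $\ell = m-n$ is strictly positive yields, for each $\ell\geq 1$, a coefficient of $z^\ell$ equal to $\sum_{n\geq 0}U_nS_{n+\ell}=\big(\sum_{n\geq 0}U_n\overline{C}\,\overline{A}^{\,n}\big)\overline{A}^{\,\ell-1}\overline{B}$. Defining $\Gamma\triangleq\sum_{n\geq 0}U_n\overline{C}\,\overline{A}^{\,n}$, the coefficient of $z^\ell$ is exactly $\Gamma\,\overline{A}^{\,\ell-1}\overline{B}$, so that $\cl{\U\Ss}_{\-}(z)=\sum_{\ell\geq 1}\Gamma\,\overline{A}^{\,\ell-1}\overline{B}\,z^\ell=\Gamma\,z(I-z\overline{A})^\inv\overline{B}=\Gamma(z^\inv I-\overline{A})^\inv\overline{B}$. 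This is precisely the asserted rational form: the state matrix $\overline{A}$ and input matrix $\overline{B}$---and therefore all the poles---are inherited from $\Ss$, while only the output matrix is modified from $\overline{C}$ to $\Gamma$, confirming the intuition stated before the lemma.

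It then remains to identify $\Gamma$ with the stated integral. Substituting $U_n=\frac{1}{2\pi}\int_{-\pi}^{\pi}U(\ejw)\e^{jn\omega}\,d\omega$ into the definition of $\Gamma$ and interchanging sum and integral gives $\Gamma=\frac{1}{2\pi}\int_{-\pi}^{\pi}U(\ejw)\overline{C}\big(\sum_{n\geq 0}\e^{jn\omega}\overline{A}^{\,n}\big)\,d\omega$, and summing the geometric series (valid since the spectral radius of $\overline{A}$, hence of $\ejw\overline{A}$, is strictly below one) collapses the inner factor to $\overline{C}(I-\ejw\overline{A})^\inv$, matching the claimed expression for $\Gamma$.

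The main obstacle is rigor rather than insight: one must justify the termwise multiplication of two infinite Laurent series and the interchange of summation and integration. Both rest on the stability of $\overline{A}$. Since $\U$ is a bounded spectral factor it is an $\Htwo$ operator, so its Markov parameters $U_n$ are square-summable and hence uniformly bounded; thus $\sum_{n\geq 0}\|U_n\|\,\|\overline{A}^{\,n}\|\leq (\sup_n\|U_n\|)\,c\sum_{n\geq 0}r^{n}<\infty$, giving absolute convergence of $\Gamma$, and $\sum_{n}U_nS_{n+\ell}$ likewise converges absolutely by the geometric decay of $S_{n+\ell}$. For the Fubini step, $\|U(\ejw)\|\in L^1(-\pi,\pi)$ (finite measure, $L^2\subset L^1$) and the geometric majorant $\|U(\ejw)\|\,\|\overline{C}\|\,c\,r^{n}\|\overline{B}\|$ is summable--integrable, so dominated convergence legitimizes the exchange. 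These estimates complete the argument.
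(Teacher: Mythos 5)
Your proposal is correct and follows essentially the same route as the paper's proof: expand $U(z)$ and $S(z)$ as z-transform series, multiply, collect the strictly positive powers of $z$ to read off $\Gamma\,\overline{A}^{\,\ell-1}\overline{B}$ as the coefficients, and identify $\Gamma=\sum_{n\geq 0}U_n\overline{C}\,\overline{A}^{\,n}$ with the Fourier integral $\frac{1}{2\pi}\int_{-\pi}^{\pi}U(\ejw)\overline{C}(I-\ejw\overline{A})^\inv d\omega$. The only difference is that you explicitly justify the absolute convergence and the sum--integral interchange via the stability of $\overline{A}$ and square-summability of the $U_n$, a rigor step the paper glosses over by invoking Parseval's theorem.
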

\begin{proof}
    Consider the z-transform expansions of $U(z)$ and $S(z)$:
    \begin{equation}
        U(z) = \sum_{k=0}^{\infty} \widehat{U}_k z^{-k}, \quad \textrm{and} \quad S(z) = \sum_{l=0}^{\infty} \overline{C} \,\overline{A}^l \,\overline{B} z^{l+1},
    \end{equation}
    where the time-domain coefficients $\widehat{U}_k$ can be derived from the Fourier series integrals as
    \begin{equation}
         \widehat{U}_k \defeq \frac{1}{2\pi} \int_{-\pi}^{\pi} U(\ejw) \e^{j \omega k} d\omega.
    \end{equation}
    Multiplying $U(z)$ and $S(z)$ and taking the strictly anti-causal parts, \ie, terms with positive powers of $z$, we get
    \begin{align}
        \{U(z) S(z)\}_{-} &= \cl{\pr{\sum_{k=0}^{\infty} \widehat{U}_k z^{-k}} \pr{\sum_{l=0}^{\infty} \overline{C} \,\overline{A}^l \,\overline{B} z^{l+1}} }_{-}, \\
        &= \pr{\sum_{k=0}^{\infty} \widehat{U}_k\overline{C} \, \overline{A}^k } \pr{\sum_{l=0}^{\infty} \overline{A}^l \,\overline{B} z^{l+1}},\\
        &= \Gamma (z^\inv I - \overline{A})^\inv \overline{B},
    \end{align}
    where $\Gamma =\sum_{k=0}^{\infty} \widehat{U}_k\overline{C} \, \overline{A}^k  $ which can be expressed as an integral
    \begin{equation}
        \Gamma = \frac{1}{2\pi} \int_{-\pi}^{\pi}  U(\ejw) \overline{C} (I-\ejw \overline{A})^\inv d\omega.
    \end{equation}
    using Parseval's theorem. 
\end{proof}

%\begin{lemma}
%    \tk{rational form for $T_{K_\circ}(z)T_{K_\circ}(z)^\ast$}
%\end{lemma}

\subsection{Proofs of  \texorpdfstring{\cref{thm:fixed_point}}{Lemma 4.1} and \texorpdfstring{\cref{cor:non-rational}}{Corollary 4.2}}

\paragraph{\textit{Proof of \cref{thm:fixed_point}:}}
Using \cref{eq: finite parameter of anticausal (U S) }, the frequency-domain optimality equations \eqref{eq: frequency domain optimal M and K} can be reformulated explicitly as follows
\begin{subequations}\label{eq: frequency domain optimal M and K alternative }
    \begin{align}
    &\textit{i.}\; M_\star(z) =  \pr{ I - \gamma_\star^\inv T_{K_\star} (z) T_{K_\star} (z)^\ast }^{-2},  \label{eq: optimal M(z) alt}\\
    &\textit{ii.}\; T_{K_\star}(z) T_{K_\star}(z)^\ast \= U_\star(z)^{\-1} \Gamma_\star ( I \- z\overline{A})^\inv \overline{B} \,\overline{B}^\ast ( I \- z\overline{A})^{\-\ast} \Gamma_\star^\ast U_\star(z)^{\-\ast} \+ T_{K_\circ}(z)T_{K_\circ}(z)^\ast,  \label{eq: optimal K(z) alte}\\
    &\textit{iii.}\; \Tr\br{\pr{(I - \gamma_\star^\inv T_{K_\star} (z) T_{K_\star} (z)^\ast)^\inv - I}^2 } = \rho^2,  \label{eq: optimal gamma star alternative}
\end{align}
\end{subequations}
where
\begin{equation}
     \Gamma_\star \triangleq \frac{1}{2\pi} \int_{-\pi}^{\pi}  U_\star(\ejw) \overline{C} (I-\ejw \overline{A})^\inv d\omega,
\end{equation}
and $(\overline{A},\overline{B},\overline{C})$ are as in \eqref{eq: modified A B C}. This gives us the desired result.
\qed

\paragraph{\textit{Proof of \cref{cor:non-rational}:}}
Define $S_{\star}(z) \defeq \Gamma_\star ( I \- z\overline{A})^\inv \overline{B}$ for notational convenience. We rewrite the optimality conditions in \eqref{eq: frequency domain optimal M and K alternative } as
\begin{align}
    &\textit{i.}\;(U_\star(z)^\ast  U_\star(z))^{-1/2} =   I - \gamma_\star^\inv T_{K_\star}(z) T_{K_\star}(z)^\ast\\
    &\textit{ii.}\;  T_{K_\star}(z) T_{K_\star}(z)^\ast \= U_\star(z)^{\-1}S_\star(z ) S_\star(z )^\ast  U_\star(z)^{\-\ast} \+ T_{K_\circ}(z)T_{K_\circ}(z)^\ast
\end{align}
By plugging $\textit{ii.}$ into $\textit{i.}$, we get
\begin{align}
      0=I - (U_\star(z)^\ast  U_\star(z))^{-1/2} -  \gamma_\star^\inv \pr{U_\star(z)^{\-1}S_\star(z ) S_\star(z )^\ast  U_\star(z)^{\-\ast} \+ T_{K_\circ}(z)T_{K_\circ}(z)^\ast} = 0,
\end{align}
Multiplying by $U_\star(z)$ from the left and by $U_\star(z)^\ast$ from the right, we get 
\begin{align*}
   0&= U_\star(z) U_\star(z)^\ast \-(U_\star(z) U_\star(z)^\ast)^{1/2} \-  \gamma_\star^\inv \pr{S_\star(z ) S_\star(z )^\ast  \+ U_\star(z)T_{K_\circ}(z)T_{K_\circ}(z)^\ast} U_\star(z)^\ast,
\end{align*}
which can be written further as
\begin{align}
    U_\star(z) U_\star(z)^\ast  = \frac{1}{4}\pr{I + \sqrt{ I + 4\gamma^\inv \pr{S_\star(z ) S_\star(z )^\ast  \+ U_\star(z)T_{K_\circ}(z)T_{K_\circ}(z)^\ast} U_\star(z)^\ast  }}^2.
\end{align}
Notice that while $S_\star(z ) S_\star(z )^\ast $ is rational, the expression above involves its positive definite square root, which does not generally preserve rationality, implying the desired result.
\qed

\subsection{Additional Discussion on the Computation of Gradients}\label{app: gradients in fw}
By the Wiener-Hopf technique discussed in \cref{lem:wiener-hopf}, the gradient $\G_k = \nabla \Phi (\M_k)$ can be obtained as
\begin{align}
    G_k(z) = U_k(z)^{-1} \cl{ \U_k \K_\circ \Delta }_{-}(z)\cl{\U_k \K_\circ \Delta }_{-}(z)^\ast U_k^{-\ast} + T_{K_\circ}T_{K_\circ}^\ast ,
\end{align}
where $\U_k^\ast \U_k = \M_k$ is the unique spectral factorization. Furthermore, by \cref{eq: finite parameter of anticausal (U S) }, we can reformulate the gradient $ G_k(z)$ more explicitly as
\begin{align}
    G_k(z) = U_k(z)^{-1}  \Gamma_k ( I \- z\overline{A})^\inv \overline{B} \,\overline{B}^\ast ( I \- z\overline{A})^{\-\ast} \Gamma_k^\ast U_k^{-\ast} + T_{K_\circ}T_{K_\circ}^\ast ,
\end{align}
where
\begin{equation}
         \Gamma_k \triangleq \frac{1}{2\pi} \int_{-\pi}^{\pi}  U_k(\ejw) \overline{C} (I-\ejw \overline{A})^\inv d\omega.
\end{equation}
Here, the spectral factor $U_k(z)$ is obtained for $z\in\TT_N$ by \cref{alg:spectral factor method}. Similarly, the parameter $\Gamma_k$ can be computed numerically using the trapezoid rule over the discrete domain $\TT_N$, \ie,
\begin{equation}
         \Gamma_k \gets \frac{1}{N} \sum_{z\in \TT_N}  U_k(z) \overline{C} (I-z \overline{A})^\inv.
\end{equation}
Noting that $T_{K_\circ}T_{K_\circ}^\ast$ is rational and depends only on the system, the gradient $G_k(z)$ can be efficiently computed for $z\in\TT_N$.

\subsection{Implementation of Spectral Factorization}\label{app: spectral factorization}

To perform the spectral factorization of an irrational function $M(z)$, we use a spectral factorization method via discrete Fourier transform, which returns samples of the spectral factor on the unit circle. First, we compute $\Lambda(z)$ for $z\in\TT_N$, which is defined to be the logarithm of $M(z)$, then we take the inverse discrete Fourier transform $\lambda_k$ for $k=0,\dots,N-1$ of $\Lambda(z)$ which we use to compute the spectral factorization as $$U(z_n) \gets \exp\pr{\frac{1}{2} \lambda_0 + \sum_{k=1}^{N/2-1} \lambda_k z_n^{-k} + \frac{1}{2} (-1)^{n} \lambda_{N/2}}$$ for $k=0,\dots,N-1$ where $z_n= \e^{j 2\pi n /N}$ .

The method is efficient without requiring rational spectra, and the associated error term, featuring a purely imaginary logarithm, rapidly diminishes with an increased number of samples. It is worth noting that this method is explicitly designed for scalar functions.

\begin{algorithm}[ht]\label{alg:spectral factor method}
   \caption{\texttt{SpectralFactor}: Spectral Factorization via DFT}
\begin{algorithmic}[1]
   \STATE {\bfseries Input:} Scalar positive spectrum $M(z)>0$ on  $\TT_N \defeq \{\e^{j 2\pi n /N} \mid n \!=\! 0,\dots,N\!-\!1\}$ 
   \vspace{0.5mm}
   \STATE {\bfseries Output:} Causal spectral factor $U(z)$ of $M(z)>0$ on  $\TT_N$ 
   \vspace{0.5mm}
   \STATE Compute the cepstrum $\begin{aligned}\Lambda(z) \gets \log(M(z))\end{aligned}$ on $z\in \TT_N$.
   \vspace{0.5mm}
   \STATE Compute the inverse DFT  \\
    \vspace{0.5mm}
    $\begin{aligned}\lambda_k \gets \operatorname{IDFT}(\Lambda(z))\end{aligned}$ for $k=0,\dots,N\!-\!1$
   \vspace{0.5mm}
   \STATE Compute the spectral factor for $z_n= \e^{j 2\pi n /N}$ \\
    \vspace{0.5mm}
   $\begin{aligned}U(z_n) \gets \exp\pr{\frac{1}{2} \lambda_0 + \sum_{k=1}^{N/2-1} \lambda_k z_n^{-k} + \frac{1}{2} (-1)^{n} \lambda_{N/2}} \end{aligned}$, \quad $n=0,\dots,N\!-\!1$
\end{algorithmic}
\end{algorithm}

\subsection{Implementation of Bisection Method}\label{app:bisection}

To find the optimal parameter $\gamma_k$ that solves $\Tr\br{((I\-\gamma_k^\inv \G_k)^{-1} \- I)^2} \= \rho^2$ in the Frank-Wolfe update \eqref{eq:frank wolfe frequency}, we use a bisection algorithm. The pseudo code for the bisection algorithm can be found in Algorithm \ref{alg:bisection}. We start off with two guesses of $\gamma$ \ie ($\gamma_{left}, \gamma_{right}$) with the assumption that the optimal $\gamma$ lies between the two values (without loss of generality).
% \tk{pseudocode}
\begin{algorithm}[H]
\caption{\texttt{Bisection Algorithm}}\label{alg:bisection}
\begin{algorithmic}[1]
   \STATE {\bfseries Input: $\gamma_{right}, \gamma_{left}$} 
   
   \vspace{0.5mm}
   \STATE Compute the gradient at $\gamma_{right}$: $grad\_\gamma_{right}$
    % \gets (\rho^2 - 1) + 2 \cdot \tr \sqrt M - \tr M$

   \vspace{0.5mm}
   % \IF {$grad\_\gamma_{right} < 0$}
   %     \WHILE {$grad\_\gamma_{right} < 0$}
   %         \STATE Update $\gamma_{left}$ to the current value of $\gamma_{right}$
   %         \STATE Increase $\gamma_{right}$ by a factor of 10
   %         \STATE Recompute the gradient at the new $\gamma_{right}$
   %     \ENDWHILE
   % \ENDIF
   
   \vspace{0.5mm}
   \WHILE {$\mid \gamma_{right} - \gamma_{left} \mid > \epsilon$}
       \STATE Calculate the midpoint $\gamma_{mid}$ between $\gamma_{left}$ and $\gamma_{right}$
       \STATE Compute the gradient at $\gamma_{mid}$
       
       \vspace{0.5mm}
       \IF {the gradient at $\gamma_{mid}$ is zero}
           \STATE \textbf{return} $\gamma_{mid}$ \COMMENT{Root found}
       \ELSIF {the gradient at $\gamma_{mid}$ is positive}
           \STATE Update $\gamma_{right}$ to $\gamma_{mid}$
       \ELSE
           \STATE Update $\gamma_{left}$ to $\gamma_{mid}$
       \ENDIF
   \ENDWHILE
   
   \vspace{0.5mm}
   \STATE \textbf{return} the average of $\gamma_{left}$ and $\gamma_{right}$ \COMMENT{Approximate root}
\end{algorithmic}
\end{algorithm}

\subsection{Proof of \texorpdfstring{\cref{thm: convergence of FW}}{Theorem 4.4}}

Our proof of convergence follows closely from the proof technique used in \cite{jaggi_revisiting_2013}. In particular, since the unit circle is discretized and the computation of the gradients $G_k(z)$ are approximate, the linear suboptimal problem is solved up to an approximation, $\delta_N$, which depends on the problem parameters, and the discretization level $N$. Namely, 
\begin{align}
    \Tr(\nabla \Phi(\M_k)  \widetilde{\M}_{k+1}) \geq   \sup_{\substack{\M \in  \Omega_\rho}} \Tr(\nabla \Phi(\M_k)  {\M})  - \delta_N
\end{align}
where
\begin{align}
\Omega_\rho \defeq \{\M\psdg 0 \mid \Tr(\M-2\sqrt{\M}+\I) \leq \rho^2\} ,   
\end{align}
Therefore, using Theorem 1 of \cite{jaggi_revisiting_2013}, we obtain 
\begin{equation}\label{eq:convergence rate_app}
    \vspace{-2mm}
        \Phi(\M_\star) - \Phi(\M_k) \leq \frac{2\kappa}{k+2}(1+\delta_N).
        \vspace{-2mm}
\end{equation}
where
\begin{align}
    \kappa \defeq \sup_{\substack{\M, \widetilde{\M} \in \Omega_\rho \\ \eta \in [0,1] \\ \M^\prime = \M + \eta(\widetilde{\M}-\M) }} \frac{2}{\eta^2} \pr{\Tr(\M^\prime \nabla \Phi(\M))-\Phi(\M^\prime)}.
\end{align}

\subsection{Implementation of Rational Approximation}\label{app: rational approximation}
We present the pseudocode of \texttt{RationalApproximation}.
\begin{algorithm}[ht]\label{alg:rational approximation method}
   \caption{\texttt{RationalApproximation}}
\begin{algorithmic}[1]
   \STATE {\bfseries Input:} Scalar positive spectrum $M(z)>0$ on  $\TT_N \defeq \{\e^{j 2\pi n /N} \mid n \!=\! 0,\dots,N\!-\!1\}$, and a small positive scalar $\epsilon$
   \vspace{0.5mm}
   \STATE {\bfseries Output:} Causal rational filter $K(z)$ on  $\TT_N$ 
   \vspace{0.5mm}
   \STATE  Get $P(z),Q(z)$  by solving the convex optimization in \eqref{eq:hinf_rational_opt}, for fixed $\epsilon$, given $M(z)$ \\
   \vspace{0.5mm}
   \STATE Get the rational spectral factors of $P(z),Q(z)$, which are $S_P(z),S_Q(z)$ using the canonical Factorization method in \cite{sayed_survey_2001}\\
   \vspace{0.5mm}
   \STATE Get $U^r(z)$,the rational spectral factor of$M(z)$, as $S_P(z)/S_Q(z)$\\
   \STATE Get $K(z)$ from the formulation in \eqref{eq: state space filter}, \eqref{eq:finalstatespaceK}
\end{algorithmic}
\end{algorithm}

\subsection{Proof of \texorpdfstring{\cref{thm:state space filter}}{Theorem 4.6} }

We write the DR estimator, $K(\e^{j\omega})$, as a sum of causal functions:
\begin{align}
      K(\e^{j\omega})&= U^{-1}\{U K_0 \Delta\}_{+}\Delta^{-1} \\
      &=U^{-1} (U\{ K_0 \Delta\}_{+}+ \{U\{ K_0 \Delta\}_{-}\}_{+})\Delta^{-1}\\
      &= \{ K_0 \Delta\}_{+}\Delta^{-1} + U^\inv \{U\{ K_0 \Delta\}_{-}\}_{+}\Delta^{-1}\label{eq:last}
\end{align}

where we drop the dependence of $\Delta, K_0$ and $U$ on $\e^{j\omega}$. 

Given the spectral factor $U(\e^{j\omega})$ in rational form as $U(\ejw)=\Tilde{D}^{1/2}(I+\Tilde{C} (\ejw I -\Tilde{A})^{-1}\Tilde{B})$, its inverse is given by:
\begin{align}\label{eq:linv}
    U^{-1}(\e^{j\omega})=(I-\Tilde{C}(\e^{j\omega}I- (\Tilde{A}-\Tilde{B}\Tilde{C}))^{-1}\Tilde{B})\Tilde{D}^{-1/2}
\end{align}
From the above, we have:
\begin{align}\label{eq:dl}
    \{K_0 \Delta \}_{-}=T(z)= C_sP A^\ast_P (z^\inv I - A^\ast_P )^\inv C_y^\ast
(I + C_yP C_y^\ast)^{-\ast/2}
\end{align}

Multiplying the above equation with $U$, and taking its causal part, we get:
\begin{align}
    \{U\{\Delta K_0\}_{-}\}_{+}= &\{\Tilde{D}^{1/2} C_sP A^\ast_P (z^\inv I - A^\ast_P)^\inv C_y^\ast(I + C_yP C_y^\ast)^{-\ast/2}+ \nonumber \\
    &\Tilde{D}^{1/2}\Tilde{C} (\ejw I -\Tilde{A})^{-1}\Tilde{B} C_sP A^\ast_P (z^\inv I - A^\ast_P )^\inv C_y^\ast(I + C_yP C_y^\ast)^{-\ast/2}\}_{+}
\end{align}

Given that the term $\Tilde{D}^{1/2} C_sP A^\ast_P (z^\inv I - A^\ast_P)^\inv C_y^\ast(I + C_yP C_y^\ast)^{-\ast/2}$ is strictly anticausal, and considering the matrix $U_{ly}$ which solves the lyapunov equation:  
$\Tilde{A}U_{ly} A_P^\ast+\Tilde{B} C_sP A^\ast_P=U_{ly}$, we get $\{U\{ K_0 \Delta\}_{-}\}_{+}$ as:

\begin{align}
&\{U\{\Delta K_0\}_{-}\}_{+}\nonumber \\
&=\{ \Tilde{D}^{1/2}\Tilde{C} \left((z I -\Tilde{A})^{-1}\Tilde{A}U_{ly} + U_{ly} A^\ast_P(z^\inv I - A^\ast_P )^\inv+U_{ly}\right)C_y^\ast(I + C_yP C_y^\ast)^{-\ast/2}\}_{+}\\
&= \Tilde{D}^{1/2}\Tilde{C} \left((z I -\Tilde{A})^{-1}\Tilde{A} +I\right)U_{ly} C_y^\ast(I + C_yP C_y^\ast)^{-\ast/2}\\
&= z\Tilde{D}^{1/2}\Tilde{C} (z I -\Tilde{A})^{-1}U_{ly} C_y^\ast(I + C_yP C_y^\ast)^{-\ast/2}\label{eq:dl_linv}
\end{align}

Now, multiplying equation \eqref{eq:dl_linv} by the inverse of $U$ \eqref{eq:linv}, we get:
\begin{align}
    U^{-1}\{U\{ K_0\Delta\}_{-}\}_{+}&=z (I+\Tilde{C} (\ejw I -\Tilde{A})^{-1}\Tilde{B})^\inv \Tilde{C} (z I -\Tilde{A})^{-1}U_{ly} C_y^\ast(I + C_yP C_y^\ast)^{-\ast/2}\\
    &=z  \Tilde{C} (I+ (z I-\Tilde{A})^\inv \Tilde{B}\Tilde{C})^\inv (z I -\Tilde{A})^{-1}U_{ly} C_y^\ast(I + C_yP C_y^\ast)^{-\ast/2}\\
     &=z  \Tilde{C} (z I-\Tilde{A}_P )^\inv U_{ly} C_y^\ast(I + C_yP C_y^\ast)^{-\ast/2}\\
    &=  \Tilde{C} (I+ \Tilde{A}_P(z I-\Tilde{A}_P )^\inv ) U_{ly} C_y^\ast(I + C_yP C_y^\ast)^{-\ast/2}\\
\end{align}
where $\Tilde{A}_P=\Tilde{A}-\Tilde{B}\Tilde{C}$.

The inverse of $\Delta$ is given by $\Delta^{-1}(z)=(I + C_yP C_y^\ast)^{-1/2}(I - C_y(z I - A_P )^{-1}K_P)$, and we already showed that $  \{ K_0 \Delta\}_{+}= 
C_s(zI -A)^\inv A PC_y^\ast(I+C_yPC_y^\ast)^{-\ast/2} +C_s PC_y^\ast(I+C_yPC_y^\ast)^{-\ast/2}$. 

Then we can get the 2 terms of equation \eqref{eq:last}: 

%(I+C_yPC_y^\ast)^{-\ast/2}
\begin{equation}\label{eq:1}
    \{ K_0 \Delta\}_{+}\Delta^{-1}=C_sPC_y^\ast (I+C_yPC_y^\ast)^{-1}+C_s \left(I-PC_y^\ast (I+C_yPC_y^\ast)^{-1} C_y\right) (zI-A_P)^\inv K_P
\end{equation}

and
\begin{align}\label{eq:2}
    &U^\inv \{U\{ K_0\Delta\}_{-}\}_{+}\Delta^{-1}
     \nonumber\\
     &=\left(\Tilde{C}U_{ly} C_y^\ast(I + C_yP C_y^\ast)^{-\ast/2}+\Tilde{C}\Tilde{A}_P(z I-\Tilde{A}_P )^\inv  U_{ly} C_y^\ast(I + C_yP C_y^\ast)^{-\ast/2}\right)\nonumber\\
     &\times \left((I + C_yP C_y^\ast)^{-1/2} - (I + C_yP C_y^\ast)^{-1/2}C_y(z I - A_P )^{-1}K_P\right) \\
     &=\Tilde{C}\Tilde{A}_P (z I-\Tilde{A}_P )^\inv U_{ly} C_y^\ast(I + C_yP C_y^\ast)^{-1} \left(I-C_y(z I - A_P )^{-1}K_P)\right)\nonumber\\
     &-\Tilde{C}U_{ly} C_y^\ast(I + C_yP C_y^\ast)^{-1}C_y(z I - A_P )^{-1}K_P \nonumber \\
     &+\Tilde{C}U_{ly} C_y^\ast(I + C_yP C_y^\ast)^{-1}
\end{align}

Finally, summing equations \eqref{eq:1} and \eqref{eq:2}, we get the controller $K(\e^{j\omega})$ in its rational form:
\begin{align}\label{eq:finalstatespaceK}
K(\e^{j\omega})&= \begin{bmatrix}
    \Tilde{C}\Tilde{A}_P & -C_s +C_sPC_y^\ast (I+C_yPC_y^\ast)^{-1} C_y+\Tilde{C}U_{ly} C_y^\ast(I + C_yP C_y^\ast)^{-1}C_y
\end{bmatrix}\\
&\times \left(z I - \begin{bmatrix}
    \Tilde{A}_P & U_{ly} C_y^\ast(I + C_yP C_y^\ast)^{-1}C_y \\0&A_P
\end{bmatrix} \right)^\inv \begin{bmatrix}
    U_{ly} C_y^\ast(I + C_yP C_y^\ast)^{-1}\\-K_P
\end{bmatrix}\\
        &+ \Tilde{C}U_{ly} C_y^\ast(I + C_yP C_y^\ast)^{-1}+C_sPC_y^\ast (I+C_yPC_y^\ast)^{-1}
\end{align}
which can be explicitly rewritten as in equation \eqref{eq: state space filter}, where $\widetilde{F}$,$\widetilde{G}$,$\widetilde{H}$ and $\widetilde{L}$ are defined accordingly.

\end{document}